\documentclass[12pt]{amsart}
\usepackage{amssymb,amsthm,amsmath,pb-diagram,enumerate}
\usepackage[OT2,T1]{fontenc}
\usepackage{eucal,mathrsfs,esvect}
\usepackage{color}
\usepackage[all]{xy}
\usepackage{fullpage}
\usepackage{url}
\usepackage{mathabx}


\makeatletter
\def\subsection{
	\@startsection
	{subsection}	
	{2}		
	{\z@}		
	{3ex}		
	{1.5ex}		
	{\bf}		
}
\makeatother

\makeatletter
\def\subsubsection{
	\@startsection
	{subsubsection}	
	{3}		
	{\z@}		
	{2.4ex}		
	{1.5ex}		
	{\it}		
}
\makeatother


\theoremstyle{plain}
\newtheorem{lem}{Lemma}[subsection]
\newtheorem{thm}[lem]{Theorem}
\newtheorem*{thm*}{Theorem}
\newtheorem{cor}[lem]{Corollary}
\newtheorem{prop}[lem]{Proposition}

\theoremstyle{definition}
\newtheorem{defn}[lem]{Definition}
\newtheorem{rem}[lem]{Remark}
\newtheorem{notn}[lem]{Notation}





\newcommand{\mbb}[1]{\mathbb #1}

\newcommand{\mc}[1]{\mathcal #1}

\newcommand{\oper}[1]{\operatorname{#1}}

\newcommand{\Gm}{\mbb G_{\mathrm m}}
\newcommand{\Ga}{\mbb G_{\mathrm a}}
\newcommand{\bmu}{\mu}
\newcommand{\Z} {\mbb Z}



\newcommand{\Br}{\oper{Br}}

\newcommand{\Gal}{\oper {Gal}}
\newcommand{\Hom}{\oper{Hom}}
\newcommand{\Aut}{\oper{Aut}}
\newcommand{\cha}{\oper{char}}
\newcommand{\Spec}{\oper{Spec}}
\newcommand{\s} {^{\oper{sep}}}
\newcommand{\gd} {^{\oper{gd}}} 
\newcommand{\GL} {\oper{GL}}
\newcommand{\SL} {\oper{SL}}
\newcommand{\Tors}{\oper{Tors}}
\newcommand{\PP}{{\mc P\mc P}}
\newcommand{\Ind}{\oper{Ind}}
\newcommand{\Vect}{\oper{Vect}}
\newcommand{\res}{\oper{res}}
\newcommand{\cores}{\oper{cor}}

\newcommand{\Nrd}{\oper{Nrd}}


\newcommand{\mX}{{\wh X}}

\newcommand{\cX}{{X}}

\renewcommand{\P}{{\mc{P}}}
\newcommand{\UU}{{\mc{U}}}
\newcommand{\B}{{\mc{B}}}

\newcommand{\Fv} {{\prod_{v \in \mc V} F_v}}
\newcommand{\Fe} {{\prod_{e \in \mc E} F_e}}
\newcommand{\Av} {{\prod_{v \in \mc V} A(F_v \otimes_F F\s)}}
\newcommand{\Ae} {{\prod_{e \in \mc E} A(F_e \otimes_F F\s)}}

\newcommand{\ov}{\overline}
\newcommand{\til}{\widetilde}
\newcommand{\wh}{\widehat}
\newcommand{\zmod}[2]{{{\mathbb Z}/{#1}\mathbb Z ({#2})}}

\usepackage[OT2,T1]{fontenc}
\DeclareSymbolFont{cyrletters}{OT2}{wncyr}{m}{n}
\DeclareMathSymbol{\Sha}{\mathalpha}{cyrletters}{"58}

\title{Local-global principles for Galois cohomology}
\author{David Harbater}
\author{Julia Hartmann}
\author{Daniel Krashen}

\date{March 24, 2013}

\begin{document}

\thanks{The first author was supported in part by NSF grant DMS-0901164. 
The second author was supported by the German Excellence Initiative via RWTH Aachen University and by the German National Science Foundation (DFG).
The third author was supported in part by NSF grant DMS-1007462.\\
\textit{Mathematics Subject Classification} (2010): 11E72, 13F25, 14H25 (Primary); 12G05, 20G15 (Secondary).\\  
\textit{Key words and phrases}: local-global principles, Galois cohomology, arithmetic curves, cohomological invariants, patching.}

\begin{abstract}
This paper proves local-global principles for Galois cohomology groups over function fields $F$ of curves that are defined over a complete discretely valued field.  We show in particular that such principles hold for $H^n(F, \Z/m \Z(n-1))$, for all $n>1$.  This is motivated by work of Kato and others, where such principles were shown in related cases for $n=3$.  Using our results in combination with cohomological invariants, we obtain local-global principles for torsors and related algebraic structures over $F$.  Our arguments rely on ideas from patching as well as the Bloch-Kato conjecture.
\end{abstract}

\maketitle

\section{Introduction}

In this paper we present local-global principles for Galois cohomology,
which may be viewed as higher-dimensional generalizations of classical
local-global principles for the Brauer group.  
These results then lead to 
local-global principles for other algebraic structures as well, via cohomological invariants.

Recall that if $F$ is a global
field, the theorem of Albert-Brauer-Hasse-Noether  
says a central simple $F$-algebra is isomorphic to a matrix algebra if and only if this is true over each 
 completion~$F_v$ of $F$.  Equivalently, the  
natural group homomorphism
\[\Br(F) \to \prod_{v \in \Omega_F} \Br(F_v)\]
is injective, where $\Omega_F$ is the set of places of $F$.  

Kato suggested a higher dimensional generalization of this in~\cite{Kato},  drawing on the observation that the above result provides a local-global principle for the 
$m$-torsion part of the Brauer group $\Br(F)[m] = H^2(F,\Z/m \Z(1))$.  (Here $\Z/m \Z(n)$ denotes $\bmu_m
^{\otimes n}$, for $m$ not divisible by $\cha(F)$.)  He proposed that the natural domain for higher-dimensional
versions of local-global principles 
should be $H^n(F, \Z/m \Z(n-1))$, for $n>1$. 
Cohomological invariants (such as the Rost invariant) often take values in $H^n(F, \Z/m \Z(n-1))$ for some $n>1$; and thus such local-global principles for cohomology could be used to obtain 
local-global principles for other algebraic objects.

In Theorem~0.8(1) of \cite{Kato}, 
Kato proved such a principle with $n=3$ 
for the function field $F$ of a smooth 
proper surface $X$ over a finite field, both with respect to the discrete valuations on $F$ that arise from codimension one points on $X$, and alternatively with respect to the set of closed points of $X$ (in the latter case using the fraction fields of the complete local rings at the points). 
He also proved a related result \cite[Theorem~0.8(3)]{Kato} for arithmetic surfaces, i.e.\ for curves over rings of integers of number fields.  The corresponding assertions for $n>3$ are vacuous in his situation, for cohomological dimension reasons; and the analogs for $n=2$ do not hold there (e.g.\ if the unramified Brauer group of the surface is non-trivial).

Unlike the classical case of dimension one, in dimension two it is also meaningful to consider local-global principles for fields that are not global, e.g.\ $k((x,y))$ or $k((t))(x)$.  In~\cite[Theorem~3.8]{COP},  
the authors start with an irreducible surface over a finite field of characteristic not dividing $m$; and they take 
the fraction field $F$ of the henselization of the local ring at a closed point.  In that situation, they prove a local-global principle for 
$H^3(F, \Z/m \Z(2))$ with respect to the discrete valuations on $F$.
Also, while not explicitly said in~\cite{Kato}, it is possible to use Theorem~5.2 of that paper
to obtain a local-global principle for function fields $F$ of curves over a non-archimedean local field, with respect to 
$H^3(F,\Z/m\Z(2))$.  This was relied on in \cite[Theorem~5.4]{CPS} and~\cite{Hu} (cf.\ also \cite[pp.~139 and 148]{PS}).

\subsection{Results}

In this manuscript, we show that when $F$ is the function field of a
curve over an {\em arbitrary} complete discretely valued field $K$,
local-global principles hold for the cohomology groups $H^n(F, \Z/m
\Z(n-1))$ for {\em all} $n>1$. 

In particular we obtain the following local-global principle with
respect to points on the closed fiber $\cX$ of a model $\mX$ of $F$ over the
valuation ring of $K$ (where $k$ is the residue field):

\begin{thm*}[\ref{shapoints}]
Let $n>1$ and let $A$ be one of the following algebraic groups over $F$:
\renewcommand{\theenumi}{\roman{enumi}}
\renewcommand{\labelenumi}{(\roman{enumi})}
\begin{enumerate}
\item 
$\Z/ m \Z (n-1)$, where $m$ is not divisible by the
characteristic of $k$, or
\item 
$\Gm$, if $\cha(k)=0$ and $K$ contains a primitive $m$-th root
of unity for all $m \ge 1$.
\end{enumerate}
Then  the natural map
\[H^n(F, A) \to \prod_{P \in \cX} H^n(F_P, A)\]
is injective, where $P$ ranges through all the points of the closed fiber $\cX$.
\end{thm*}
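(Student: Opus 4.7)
My plan: The central observation is that for each branch $\wp$ at a closed point $P$ of $\cX$, the completion $F_\wp$ contains $F_P$ as a subfield; hence any class $\alpha \in H^n(F,A)$ that is trivialized over every $F_P$ is automatically trivialized over every $F_\wp$. The stated theorem is therefore an immediate consequence of the finer injectivity statement
\[
H^n(F,A) \longrightarrow \prod_{P \in \cX} H^n(F_P,A) \times \prod_{\wp} H^n(F_\wp,A),
\]
where $\wp$ ranges over branches at closed points of $\cX$, and $P$ over all points of $\cX$ (including generic points of irreducible components). Proving this refined local-global principle is the main technical content.

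To prove the refined principle, I would combine the patching framework developed in the authors' earlier work with the Bloch--Kato (norm residue) theorem. For case (i), given a class $\alpha \in H^n(F, \Z/m \Z(n-1))$ that vanishes on each $F_P$ and each $F_\wp$, Bloch--Kato identifies $H^n(F, \Z/m \Z(n-1))$ with $K^M_n(F)/m$, so $\alpha$ is representable as a sum of Milnor symbols $\{a_1,\dots,a_n\}$. Each such symbol factors as the cup product of a Brauer class $\{a_1,a_2\} \in H^2(F,\bmu_m)$ with a Milnor symbol of degree $n-2$, or alternatively as $\{a_1\} \cup \eta$ with $\eta \in H^{n-1}(F, \Z/m \Z(n-2))$.

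The argument then proceeds by induction on $n$. The base case $n=2$ is the local-global principle for the Brauer group with respect to the patches, which follows from the Mayer--Vietoris type patching results of the authors, exploiting the factorization property for units (and more generally rational connected linear algebraic groups) across the refined patch structure. For the inductive step, given the decomposition of $\alpha$ into symbols, one uses the vanishing of $\alpha$ on each patch together with the patching factorization for tori to split each symbol into contributions vanishing over the relevant patches; the Brauer base case disposes of the degree-two piece, and the inductive hypothesis handles the residual lower-degree symbol. Part (ii), for $A = \Gm$, reduces to part (i) via the Kummer sequence: under the hypothesis that $K$ contains all roots of unity, the Tate twists become trivial, and the local-global principle for $\Gm$ follows from the local-global principles for the finite coefficient modules $\Z/m\Z(n-1)$ by passing to the limit.

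The main obstacle is reconciling the Bloch--Kato decomposition with the patching factorization. Expressing $\alpha$ as a sum of symbols is highly non-canonical, and one needs the individual symbol factorizations at each patch to assemble into a coherent global factorization. Controlling this interaction requires carefully tracking cup-product structures through the patching Mayer--Vietoris sequence and invoking Bloch--Kato at multiple stages of the induction, so that each inductive step correctly converts local triviality (on the finer collection of patches) into a cup-product factorization that can be globalized.
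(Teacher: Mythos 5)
Your proposed reduction to a ``refined'' injectivity statement with $F_\wp$ added to the target is actually no refinement at all: since each $F_\wp$ contains the corresponding $F_P$, triviality over every $F_P$ automatically gives triviality over every $F_\wp$, so the two injectivity statements are equivalent. More seriously, your sketch skips the step that is the genuine technical content of the paper's proof, and the part you do sketch is handled by a much cleaner device in the paper.

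The patching local-global principle (Theorem~\ref{patch local-global}) takes as input a \emph{finite} choice $\P$ of closed points and the resulting finite collection $\UU$, and its local conditions are triviality over the fields $F_P$ \emph{and} over the fields $F_U$ for $U \in \UU$. But the hypothesis of Theorem~\ref{shapoints} only gives triviality over the fields $F_P$ for $P$ a point of $\cX$ --- including the generic points $\eta$ of components --- and never directly over any $F_U$. The paper bridges this gap with Proposition~\ref{big patch local}: because $\alpha_{F_\eta}=0$, an Artin-approximation argument over the Henselian ring $R_\eta^h$ (Lemma~\ref{hens ring}), combined with Lemma~\ref{coho limit}, produces a Zariski-open $V \ni \eta$ with $\alpha_{F_V}=0$. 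Only after shrinking so that the patches $U_i$ are chosen this way can one feed $\alpha$ into the patch-level injectivity. Your proposal contains nothing playing this role, and without it you cannot start using the patching Mayer--Vietoris machinery at all.

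Second, your proposed induction on $n$ via an explicit Milnor-symbol decomposition of $\alpha$ is not what the paper does, and you yourself flag its central difficulty (``reconciling the Bloch--Kato decomposition with the patching factorization'') without resolving it. The paper's argument requires no such reconciliation. In Theorem~\ref{local-global bipartite}, the long exact sequence of Theorem~\ref{long ex sequence} identifies $\ker(\sigma_n)$ with the cokernel of the map $\rho^{n,n}$ on finite products of $H^n$'s; one then shows surjectivity of $\rho^{n,n}_0$ by noting that $\rho^{\bullet,\bullet}_0$ is a \emph{graded ring homomorphism}, that $\rho^{1,1}_0$ is surjective (by a Weierstrass/Hensel argument, Lemma~\ref{local Weier}), and that by Bloch--Kato (Proposition~\ref{cup products}) every class in the target is a sum of cup products of degree-one classes. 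Bloch--Kato is invoked only once, on the target side, and there is no need to track a symbol decomposition of $\alpha$ itself across patches or through the induction. Your approach would need to solve a non-canonicity problem that the paper's approach simply sidesteps.

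Finally, for part (ii), ``passing to the limit'' is not quite how the paper handles $\Gm$: Corollary~\ref{MV splits tori} uses the finiteness of $\B$ together with Lemma~\ref{torsion coho surjects} to lift an element of $\prod_\wp H^{n-1}(F_\wp,\Gm)$ into $\prod_\wp H^{n-1}(F_\wp,\bmu_m)$ for a single $m$, and then applies the finite-coefficient patching surjectivity and a commuting square. A naive colimit argument would not obviously give injectivity of the map on $H^n(F,\Gm)$.
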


Here $F_P$ denotes the fraction field of the complete local ring of $\mX$ at $P$.

We also obtain a local-global principle with respect to discrete valuations
if $K$ is equicharacteristic:
\begin{thm*}[\ref{sha_vanish}]
Suppose that $K$ is an equicharacteristic complete discretely valued field of
characteristic not dividing $m$, and that $\mX$ is a regular projective
$T$-curve with function field $F$. 
Let $n>1$.  Then the natural map
\[H^n(F, \zmod m {n-1}) \to \prod_{v \in \Omega_\mX} H^n(F_v, \zmod m
{n-1})\]
is injective.
\end{thm*}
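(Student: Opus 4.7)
My plan is to deduce Theorem~\ref{sha_vanish} from the point-version of the local-global principle given in Theorem~\ref{shapoints}. Let $\alpha \in H^n(F, \zmod m {n-1})$ be a class whose image in $H^n(F_v, \zmod m {n-1})$ vanishes for every $v \in \Omega_\mX$. I will show that $\alpha|_{F_P} = 0$ for every point $P \in \cX$; Theorem~\ref{shapoints} then forces $\alpha = 0$.

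If $P$ is a generic point of an irreducible component of $\cX$, the complete local ring $\wh{\mc O}_{\mX, P}$ is itself a complete discrete valuation ring and $F_P = F_v$ for the corresponding $v \in \Omega_\mX$, so the vanishing is immediate. The substantive case is that of a closed point $P \in \cX$: set $R = \wh{\mc O}_{\mX, P}$, a two-dimensional regular complete local ring with fraction field $F_P$. By the equicharacteristic hypothesis and Cohen's structure theorem, $R \cong \kappa(P)[[x,y]]$, and in particular $R$ contains a coefficient field identified with $\kappa(P)$.

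At a closed point $P$, my approach combines the cohomological residue sequence on $\Spec R$ with absolute cohomological purity (valid since $\cha(\kappa(P)) \nmid m$) and the Bloch--Kato theorem to analyze classes in $H^n(F_P, \zmod m {n-1})$ through their residues
\[
\partial_{\mathfrak p} \colon H^n(F_P, \zmod m {n-1}) \to H^{n-1}(\kappa(\mathfrak p), \zmod m {n-2})
\]
at height-one primes $\mathfrak p$ of $R$, plus an unramified part. The residues along height-one primes arising from branches through $P$ of irreducible divisors of $\mX$ can be shown to vanish: each such prime comes, possibly after a regular blow-up of $\mX$ at $P$ (which enlarges $\Omega_\mX$ but does not change $F$), from a codimension-one point of the new model, giving some discrete valuation at which $\alpha$ vanishes by hypothesis. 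Functoriality of the residue maps then propagates this to $\partial_{\mathfrak p}(\alpha|_{F_P}) = 0$. For the remaining height-one primes of $R$, and for the unramified part, one exploits the coefficient-field retract $R \twoheadrightarrow \kappa(P)$ provided by Cohen's theorem to pull the problem back to $H^n(\kappa(P), \zmod m {n-1})$, which can then be killed using a $v \in \Omega_\mX$ corresponding to the generic point of the component of $\cX$ containing $P$.

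The main obstacle I anticipate is precisely the treatment of the height-one primes of $R$ that do not come from algebraic divisors on any model of $F$ (the ``parasitic'' branches), together with the residual unramified cohomology over $\Spec R$. The equicharacteristic hypothesis is crucial here because it supplies the coefficient-field retract from Cohen's theorem, which is unavailable in mixed characteristic and is what allows the unramified classes to be reduced to cohomology of $\kappa(P)$; this also clarifies why the theorem is stated only in the equicharacteristic case.
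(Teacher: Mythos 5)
Your overall route agrees with the paper's: you reduce, via Theorem~\ref{shapoints}, to showing that a class $\alpha$ with $\alpha|_{F_v}=0$ for all $v\in\Omega_\mX$ also satisfies $\alpha|_{F_P}=0$ at every $P\in X$, treating generic points by the observation $F_\eta=F_v$; this is exactly the reduction packaged in Proposition~\ref{sha ses}(\ref{shasha}) together with Corollary~\ref{local sha vanish}. Where the proposal breaks down is the local step over $R=\wh R_P$. First, there are no ``parasitic'' branches: by Lemma~\ref{branch extension} (resting on \cite[Proposition~7.5]{HHK:H1}), \emph{every} height-one prime $\mathfrak p$ of $\wh R_P$ induces a discrete valuation on $F_P$ whose restriction to $F$ is a nontrivial discrete valuation coming from a codimension-one point of $\mX$ through $P$, hence lies in $\Omega_\mX$. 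So $\alpha|_{(F_P)_{\mathfrak p}}=0$ for every such $\mathfrak p$, and the case split you anticipate (branches of divisors of $\mX$ versus the rest) does not occur. Your blowup step is also not legitimate as written: a regular blowup $\mX'\to\mX$ replaces $\Omega_\mX$ by $\Omega_{\mX'}=\Omega_\mX\cup\{v_E\}$, and the hypothesis says nothing about $\alpha$ at the exceptional valuation $v_E$.

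Second, once one knows $\alpha|_{F_P}\in\Sha^n_{\wh R_P}(F_P,A)$, the substantive work is to prove $\Sha^n_{\wh R_P}(F_P,A)=0$. Your mention of the Cousin complex, purity, and coefficient fields points in the right direction but is not carried out, and the ingredient actually required is Panin's equicharacteristic Gersten theorem (Theorem~\ref{gersten}), not a residue sequence alone. In Proposition~\ref{gersten local Sha vanish} this is used three times: the exactness of the Cousin complex lifts $\alpha|_{F_P}$ (with all residues zero) to some $\tilde\alpha\in H^n(\Spec\wh R_P,A)$, and for a regular parameter $\sigma$ the two injections $H^n(\wh R_\sigma,A)\hookrightarrow H^n(F_\sigma,A)$ and $H^n(\mc O_{\kappa(\sigma)},A)\hookrightarrow H^n(\kappa(\sigma),A)$ likewise come from the Gersten theorem. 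Combined with Artin's identifications $H^n(\Spec\wh R_P,A)\cong H^n(\kappa(P),A)\cong H^n(\mc O_{\kappa(\sigma)},A)$ and the vanishing of $\alpha$ at the height-one prime $(\sigma)$, a diagram chase gives $\tilde\alpha=0$. Your ``kill the unramified part via $v$ at the generic point'' suggestion corresponds to choosing $\sigma$ so that $(\sigma)$ restricts to the $\eta$-adic valuation, but without the two Gersten-derived injections the chase does not close. So the equicharacteristic hypothesis enters through Panin's theorem rather than directly through a Cohen coefficient-field retract, and the missing content of your sketch is precisely the Gersten-based diagram chase.
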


Here $\Omega_\mX$ is the set of discrete valuations on $F$ that arise from codimension one points on $\mX$.  Also, in the above results and henceforth, the cohomology that is used is Galois cohomology, where $H^n(F,A) = H^n(\Gal(F),A(F\s))$ for $A$ a smooth commutative group scheme over $F$ and $n \ge 0$, with $H^0(F,A)=A(F)$.  (For non-commutative group schemes, we similarly have $H^0$ and $H^1$.)

These results also yield new local-global principles for torsors under
linear algebraic groups by the use of cohomological invariants such
as the Rost invariant (\cite[p.~129]{GMS}), following a strategy used in~\cite{CPS} and \cite{Hu}.  We list some of these applications of our
local-global principles in Section~\ref{applications}.
Note that although we also obtained certain local-global
principles for torsors for linear algebraic groups in~\cite{HHK:H1},
the results presented here use a different set of hypotheses on the
group.  In particular, here we do not require that the group $G$ be
rational, unlike in \cite{HHK:H1}.

\subsection{Methods and structure of the manuscript}

Our approach to obtaining these local-global principles uses the
framework of patching over fields, as in~\cite{HH}, \cite{HHK},
and~\cite{HHK:H1}. The innovation is that these principles derive from
long exact Mayer-Vietoris type sequences with respect to the
``patches'' that arise in this framework.  These sequences are
analogous to those in~\cite{HHK:H1} for linear algebraic groups that
were not necessarily commutative (but where only $H^0$ and $H^1$ were
considered for that reason).  

In Section~\ref{abstract patching}, we derive Mayer-Vietoris
sequences and local-global principles in an abstract context of a
field together with a finite collection of overfields (Section~\ref{abstract MV}). This allows us
to isolate the necessary combinatorial, group-theoretic, and cohomological properties of our
fields and Galois modules. The combinatorial data of
the collection of fields we use is encoded in the notion of a $\Gamma$-field; see
Section~\ref{gamma fields}.  The key group-theoretic property of our
Galois modules is ``separable factorization'', introduced in
Section~\ref{sep fact sect}.  The cohomological properties we
require are formulated in the concept of global domination of Galois
cohomology (Sections~\ref{gl dom sect} and~\ref{gl dom cond sect}).
An essential ingredient in our arguments is the Bloch-Kato conjecture.

In Section~\ref{curve section}, we apply our results to the situation of a
function field over a complete discretely valued field. In
Section~\ref{curve patches} we obtain a local-global principle with
respect to ``patches.'' This is used in Section~\ref{curve points} to
obtain a local-global principle with respect to points on the closed
fiber of a regular model. Finally, in Section~\ref{curve dvr}, we
obtain our local-global principle with respect to discrete valuations
with the help of a result of Panin \cite{Panin:EGC} for local rings
in the context of Bloch-Ogus theory. This step is related to ideas
used in~\cite{Kato}.

In Section~\ref{applications}, we combine our local-global principles
with cohomological invariants taking values in $H^n(F, \Z/m
\Z(n-1))$, to obtain our applications to other algebraic structures.

\medskip

\noindent{\bf Acknowledgments.}  The authors thank Jean-Louis
Colliot-Th\'el\`ene, Skip Garibaldi, Yong Hu, and Annette Maier for helpful comments on this manuscript.

\section{Patching and local-global principles for cohomology} \label{abstract patching}

This section considers patching and local-global principles for cohomology in an abstract algebraic setting, in which we are given a field and a finite collection of overfields indexed by a graph.  The results here will afterwards be applied to a geometric setting in Section~\ref{curve section}, where we will consider curves over a complete discretely valued field.

In the situation here, we will obtain a new long exact sequence 
for Galois cohomology with respect to the given field and its overfields, which 
in a key special case can be interpreted as a Mayer-Vietoris sequence.
In~\cite[Theorem~2.4]{HHK:H1}, we obtained such a sequence for linear algebraic groups that need not be commutative.  Due to the lack of commutativity, the assertion there was just for $H^0$ and $H^1$;
and that result was then used in~\cite{HHK:H1} to
obtain local-global principles for torsors in a more geometric context.  In the present paper, we consider commutative linear algebraic groups, and so higher cohomology groups $H^n$ are defined.  It is for these that we prove our long exact sequence, which we then use to obtain
a local-global principle for Galois cohomology in the key case of $H^n(F,\Z/m\Z(n-1))$ with $n > 1$.  This is carried out in Sections~\ref{abstract MV}
and~\ref{curve patches}.
(Note that the six-term cohomology sequence in~\cite[Theorem~2.4]{HHK:H1} is used in our arguments here, in the proofs of Theorems~\ref{loc-gl torsors} and~\ref{equiv factorization}.)

\subsection{$\Gamma$-Fields and patching} \label{gamma fields}

Our local-global principles will be obtained by an approach that
\textit{formally} emulates the notion of a cover of a topological
space by a collection of open sets, in the special case that there are
no nontrivial triple overlaps. In this case, one may ask to what
extent one may derive global information from local information with
respect to the sets in the open cover.  We encode this setup
combinatorically in the form of a graph whose vertices correspond to
the connected open sets in the cover and whose edges correspond to the
connected components of the overlaps (though we do not 
introduce an associated topological space or Grothendieck topology).

In our setting the global space will correspond to a field $F$ whose
arithmetic we would like to understand, and the open sets and overlaps
correspond to field extensions of $F$. This setup is formalized in the
definitions below, which draw on terminology in~\cite{HH} and~\cite{HHK:H1}.

\subsubsection{Graphs and $\Gamma$-fields}

By a \textit{graph} $\Gamma$, we will always mean a finite multigraph,
with a vertex set $\mc V$ and an edge set $\mc E$; i.e.\ we will
permit more than one edge to connect a pair of vertices.  But we will
not permit loops at a vertex: the two endpoints of an edge are
required to be distinct vertices.  

By an {\em orientation} on $\Gamma$ we will mean a choice of labeling
of the vertices of each edge $e \in \mc E$, with one chosen to be
called the {\em left vertex} $l(e)$ and the other the {\em right
vertex} $r(e)$ of $e$.  This choice can depend on the edge (i.e.\ a
vertex $v$ can be the right vertex for one edge at $v$, and the left
vertex for another edge at $v$).    

\begin{defn}
Let $\Gamma$ be a graph. A \textit{$\Gamma$-field} $F_\bullet$
consists of the following data:
\begin{enumerate}
\item
For each $v \in \mc V$, a field $F_v$,
\item
For each $e \in \mc E$, a field $F_e$,
\item
An injection $\iota_v^e: F_v \to F_e$ whenever $v$ is a vertex of the edge $e$.
\end{enumerate}
\end{defn}

Often we will
regard $\iota_v^e$ as an inclusion, and not write it
explicitly in the notation if the meaning is clear.

A $\Gamma$-field $F_\bullet$ can also be interpreted as an inverse system of fields.  Namely, the index set of the inverse system is the disjoint union $\mc V \sqcup \mc E$; 
and the maps consist of inclusions of fields 
$\iota_v^e: F_v \hookrightarrow F_e$ 
as above.

Conversely, consider any finite inverse system of fields whose index set can be partitioned into two subsets $\mc V \sqcup \mc E$, such that for each $e \in \mc E$ there are exactly two elements in $v,v' \in \mc V$ 
having maps $F_v \hookrightarrow F_e$ and $F_{v'} \hookrightarrow F_e$ in the inverse system; and such that there are no other maps in the inverse system.  Then such an inverse system of fields, called a {\em factorization inverse system} in \cite[Section~2]{HHK:H1}, gives rise to a graph $\Gamma$ and a $\Gamma$-field $F_\bullet$ as above.

Given a $\Gamma$-field $F_\bullet$, we may consider the inverse limit $F_{\Gamma}$ of the fields in $F_\bullet$, with respect to the associated inverse system, in the category of rings.  Equivalently, 
\[F_{\Gamma} = \left\{\left.a_\bullet \in \Fv
\ \right| \  \iota_v^e a_v =
\iota_w^e a_w \text{ for each $e$ incident to $v$ and $w$}\right\}.\]
We may also regard $F_{\Gamma}$ as a subring of $\Fe$, by sending an element $a_\bullet = (a_v)_{v \in \mc V}$ to $(a_e)_{e
\in \mc E}$, where $a_e = \iota_v^e
a_v = \iota_w^e a_w$ if $e$ is incident to $v$ and $w$.

Note that if $F_\bullet$ is a $\Gamma$-field, then we may regard each
field $F_v, F_e$ naturally as an $F_{\Gamma}$-algebra in such a way
that all the inclusions $\iota_v^e$ are $F_{\Gamma}$-algebra homomorphisms.  

\begin{lem}
If $F_\bullet$ is a $\Gamma$-field, then $F_{\Gamma}$ is a field if and
only if $\Gamma$ is connected. 
\end{lem}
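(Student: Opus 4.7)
The plan is to argue both directions directly from the explicit description of $F_{\Gamma}$ as the subring of $\Fv$ consisting of tuples $(a_v)$ satisfying $\iota_v^e a_v = \iota_w^e a_w$ for every edge $e$ with endpoints $v,w$. The case $|\mc V| = 1$ is trivial (there are no edges, and $F_\Gamma = F_v$), so I may assume $|\mc V| \ge 2$.

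For the ``only if'' direction I would prove the contrapositive. If $\Gamma$ is disconnected, choose a nontrivial partition $\mc V = \mc V_1 \sqcup \mc V_2$ such that no edge of $\mc E$ joins $\mc V_1$ to $\mc V_2$. Then the element $\epsilon = (\epsilon_v) \in \Fv$ defined by $\epsilon_v = 1$ for $v \in \mc V_1$ and $\epsilon_v = 0$ for $v \in \mc V_2$ vacuously satisfies the compatibility condition across every edge, so $\epsilon \in F_\Gamma$. Since both parts are nonempty, $\epsilon$ is a nontrivial idempotent, which forces $F_\Gamma$ to have a zero divisor and hence not to be a field.

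For the ``if'' direction, assume $\Gamma$ is connected and fix a nonzero element $a_\bullet = (a_v) \in F_\Gamma$. I would first show $a_v \ne 0$ for all $v$ by propagation along edges: if $e$ is incident to $v$ and $w$, then $\iota_v^e(a_v) = \iota_w^e(a_w)$, and since $\iota_v^e$ and $\iota_w^e$ are injective, $a_v \ne 0$ if and only if $a_w \ne 0$. Picking some $v_0$ with $a_{v_0} \ne 0$ and walking along a path in $\Gamma$ to any other vertex (which exists by connectedness), the equivalence forces $a_v \ne 0$ everywhere. Then the tuple $(a_v^{-1}) \in \Fv$ still satisfies the compatibility condition, because the $\iota_v^e$ are ring homomorphisms: $\iota_v^e(a_v^{-1}) = \iota_v^e(a_v)^{-1} = \iota_w^e(a_w)^{-1} = \iota_w^e(a_w^{-1})$. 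Hence $(a_v^{-1}) \in F_\Gamma$ is a multiplicative inverse of $a_\bullet$, and $F_\Gamma$ is a field.

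The only subtle point, and where I would be careful, is the propagation step: it depends crucially on the injectivity of each $\iota_v^e$ (from the definition of a $\Gamma$-field) together with connectedness of $\Gamma$. Everything else is purely formal manipulation of the inverse-limit description.
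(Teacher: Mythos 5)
Your proof is correct, and it differs from the paper's treatment of the ``if'' direction. For ``disconnected $\Rightarrow$ not a field'' you do essentially what the paper does: exhibit a nontrivial idempotent supported on one side of a disconnection. For the converse, however, the routes diverge. The paper argues the contrapositive, starting from a zero-divisor $a_\bullet \in F_\Gamma$ and observing that $\{\xi : a_\xi = 0\}$ is a nonempty proper clopen subset of $\Gamma$, hence $\Gamma$ is disconnected; this relies on the (unelaborated) claim that $F_\Gamma$, when not a field, must actually contain a zero-divisor. You instead prove ``connected $\Rightarrow$ field'' directly: given a nonzero $a_\bullet$, the injectivity of each $\iota_v^e$ lets you propagate nonvanishing along edges to every vertex via connectedness, after which you write down the componentwise inverse $(a_v^{-1})$ and check it still satisfies the equalizer condition. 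Your version is slightly more self-contained, since it never needs the intermediate step ``not a field $\Rightarrow$ has a zero-divisor'' — an assertion that for a general subring of a product of fields is false (e.g.\ $\mathbb{Z} \subset \mathbb{Q}$) and whose truth here really does rest on exactly the propagation argument you spell out. The paper's version is more compact and emphasizes the topological picture (open supports, clopen sets), which foreshadows the Mayer--Vietoris perspective developed later; your version is the one a careful referee would want to see written out.
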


\begin{proof}
If $\Gamma$ is disconnected, there are 
elements $a_\bullet$ of the inverse limit $F_{\Gamma}$
such that $a_\xi = 0$ for all $\xi \in \mc V \sqcup \mc E$ that lie on one connected
component of $\Gamma$, but $a_\xi = 1$ for all $\xi$ on another component.  
Hence 
$F_{\Gamma}$ has zero-divisors and is not a
field.  Conversely, if $F_{\Gamma}$ is not a field, then there is a zero-divisor $a_\bullet$.  The set of vertices and edges $\xi$ such that $a_\xi = 0$ forms an open subset of $\Gamma$, since $\iota_v^e
a_v = a_e = \iota_w^e a_w$ whenever $v,w$ are the vertices of an edge $e$.  This open subset is neither empty nor all of $\Gamma$, since $a_\bullet$ is a zero divisor.
Hence $\Gamma$ is disconnected.
\end{proof}

\begin{notn}
We will say for short that $F_\bullet$ is a \textit{$\Gamma/F$-field} if 
$\Gamma$ is a connected graph, $F$ is a field, 
and $F_\bullet$ is a $\Gamma$-field with $F_{\Gamma} = F$.
\end{notn}

\subsubsection{Patching problems}

Given a $\Gamma/F$-field $F_\bullet$, and a finite dimensional vector space $V$ over $F$, 
we obtain an inverse system $V_{F_\xi} = V \otimes_F F_\xi$ of finite dimensional vector spaces over the fields $F_\xi$ (for $\xi \in \mc V \sqcup \mc E$).  
Conversely, given such an inverse system, we can ask whether it is induced by an $F$-vector space $V$.  
More precisely, let $\Vect(F)$ be the category of finite dimensional $F$-vector spaces; define a \textit{vector space patching problem} $V_\bullet$ over $F_\bullet$ to be an inverse system of finite dimensional $F_\xi$-vector spaces; 
and let $\PP(F_\bullet)$ be the category of vector space patching problems over $F_\bullet$.  
There is then a base change functor $\Vect(F) \to \PP(F_\bullet)$.  
If it is an equivalence of categories, we say that \textit{patching holds for finite dimensional vector spaces} over the $\Gamma/F$-field
$F_\bullet$.  

We may consider analogous notions for other objects over $F$.  In particular
let $A$ be a group scheme over $F$ (which we always assume to be of finite type).  
Let $\Tors(A)$ denote the category of $A$-torsors over $F$; the objects in this category are classified by the elements in the Galois cohomology group $H^1(F,A)$.  

An object $\mc T$ in $\Tors(A)$ induces an {\em $A$-torsor patching problem} $\mc T_\bullet $ over $F_\bullet$, 
i.e.\ an inverse system consisting of $A_{F_\xi}$-torsors $\mc T_\xi$ for each $\xi \in \mc V \sqcup \mc E$,
together with isomorphisms $\phi_v^e:(\mc T_v)_{F_e} \to \mc T_e$ for $v$ a vertex of an edge $e$.  These patching problems form a category $\PP(F_\bullet,A)$, whose
morphisms correspond to collections of morphisms of torsors which
commute with the maps $\phi_v^e$.
(Once we choose an orientation on the graph $\Gamma$, an $A$-torsor patching problem can also be viewed as collection of $A$-torsors $\mc T_v$
for $v \in \mc V$, together with a choice of isomorphism $(\mc T_{l(e)})_{F_e} \to
(\mc T_{r(e)})_{F_e}$
for every edge $e \in \mc E$.)  
As before, we obtain a base change functor $\Tors(A) \to
\PP(F_\bullet,A)$; and we say that \textit{patching holds for
$A$-torsors} over the $\Gamma/F$-field $F_\bullet$ if this is an
equivalence of categories.  For short we say that \textit{patching
holds for torsors over $F_\bullet$} if it holds for all linear algebraic groups $A$ over $F$.
(Our convention is that a \textit{linear algebraic group} over $F$ is a 
smooth closed subgroups $A \subseteq \GL_{n,F}$ for some $n$.)

\subsubsection{Local-global principles and simultaneous factorization}
\label{lgpsf}

Local-global principles are complementary to patching.  Given a $\Gamma/F$-field $F_\bullet$, and a group scheme $A$ over $F$, we say that {\em  $A$-torsors over $F$ satisfy a local-global principle} over $F_\bullet$ if an $A$-torsor $\mc T$ is trivial if and only if each induced $F_v$-torsor ${\mc T}_v := \mc T \times_F F_v$ is trivial.  
In~\cite{HHK:H1}, criteria were given for patching and for local-global principles in terms of factorization.  Before recalling them, we introduce some terminology and notation.

If $F_\bullet$ is a $\Gamma/F$-field, and if $\Gamma$ is given an orientation, then there are induced maps $\pi_l,\pi_r:\Fv \to \Fe$ defined by 
$(\pi_l(a))_e = a_{l(e)}$ and $(\pi_r(a))_e = a_{r(e)}$ for $a = (a_v)_{v \in \mc V} \in \Fv$.  Similarly, if $A$ is a group scheme over $F$, there are induced maps $\pi_l,\pi_r:\prod_{v \in \mc V} A(F_v) \to \prod_{e \in \mc E} A(F_e)$
given by the same expressions, for $a = (a_v)_{v \in \mc V} \in \prod_{v \in \mc V} A(F_v)$.
We say that a group scheme $A$ over $F$ {\em satisfies simultaneous factorization} over a $\Gamma/F$-field $F_\bullet$ (or for short, is {\em factorizable} over $F_\bullet$) if the map of pointed sets $\pi_l \cdot \pi_r^{-1}:\prod_{v \in \mc V} A(F_v) \to \prod_{e \in \mc E} A(F_e)$, defined by $a \mapsto \pi_l(a)\pi_r(a)^{-1}$, is surjective. 
In other words, if we are given a collection of elements $a_e \in A(F_e)$ for all $e \in \mc E$, then there exist elements $a_v \in A(F_v)$ for all $v \in \mc V$ such that $a_e = a_{l(e)} a_{r(e)}^{-1}$ for all $e$, with respect to the inclusions $F_{l(e)},F_{r(e)} \hookrightarrow F_e$.  
Note that this factorization condition does not depend on the choice of orientation, since if we reverse the orientation on an edge $e$ then we may consider the element $a'\in\prod_{e \in \mc E} A(F_e)$ such that $a'_e = a_e^{-1}$ and where the other entries of $a'$ are the same as for $a$.  

\subsubsection{Relations between patching, local-global principles
and factorization}

The following two results are essentially in \cite{HHK:H1}.

\begin{thm} \label{equiv patching}
Let $\Gamma$ be a connected graph, $F$ a field, and $F_\bullet$ a $\Gamma/F$-field.  Then the following conditions are equivalent:
\begin{enumerate}
\renewcommand{\theenumi}{\roman{enumi}}
\renewcommand{\labelenumi}{(\roman{enumi})}
\item \label{sim fact}
$\GL_n$ is factorizable over $F_\bullet$ for all $n \ge 1$.
\item \label{vs patching}
Patching holds for finite dimensional vector spaces over $F_\bullet$.
\item \label{torsor patching}
Patching holds for torsors over $F_\bullet$.
\end{enumerate}
\end{thm}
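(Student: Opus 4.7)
The plan is to prove (\ref{sim fact}) $\Leftrightarrow$ (\ref{vs patching}) and (\ref{vs patching}) $\Leftrightarrow$ (\ref{torsor patching}) separately, following the approach of \cite{HHK:H1}.  Fix an orientation on $\Gamma$ throughout.

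For (\ref{sim fact}) $\Leftrightarrow$ (\ref{vs patching}): full faithfulness of the base change functor $\Vect(F) \to \PP(F_\bullet)$ is formal, since $F = F_\Gamma$ is the inverse limit of the $F_\xi$ and $\Hom$-sets between finite dimensional vector spaces commute with this limit.  The real content is essential surjectivity.  Given a vector space patching problem $V_\bullet$, I would reduce componentwise to constant rank $n$ and choose bases identifying $V_v \cong F_v^n$ and $V_e \cong F_e^n$; the gluing then provides, for each edge $e$, an element $\alpha_e \in \GL_n(F_e)$ measuring the discrepancy between the left and right identifications.  Simultaneous factorization (\ref{sim fact}) produces $\beta_v \in \GL_n(F_v)$ with $\alpha_e = \beta_{l(e)}\beta_{r(e)}^{-1}$, and applying these changes of basis identifies $V_\bullet$ with the standard patching problem of $F^n$.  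Conversely, given (\ref{vs patching}) and a factorization datum $(\alpha_e) \in \prod_e \GL_n(F_e)$, I would build the patching problem with $V_v = F_v^n$ and edge gluings twisted by $\alpha_e$; patching descends this to some $F$-vector space $V$ of dimension $n$, which is necessarily isomorphic to $F^n$, and any such isomorphism yields the desired $\beta_v$ by comparison on each vertex.

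For (\ref{vs patching}) $\Leftrightarrow$ (\ref{torsor patching}): the direction (\ref{torsor patching}) $\Rightarrow$ (\ref{vs patching}) is immediate by specializing $A = \GL_n$, since $\GL_n$-torsors over a field correspond to rank-$n$ vector bundles.  For the harder direction (\ref{vs patching}) $\Rightarrow$ (\ref{torsor patching}), I would embed an arbitrary linear algebraic group $A$ as a closed subgroup of some $\GL_n$ and form the quotient $\GL_n/A$.  Given an $A$-torsor patching problem, induction along $A \hookrightarrow \GL_n$ yields a $\GL_n$-torsor patching problem that descends by (\ref{vs patching}) to a $\GL_n$-torsor over $F$.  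The six-term exact sequence of \cite[Theorem~2.4]{HHK:H1}, combined with nonabelian descent for $\GL_n/A$, then allows one to reduce the $\GL_n$-torsor back to an $A$-torsor over $F$; full faithfulness of the torsor base change functor is again formal from the inverse limit structure.

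The main obstacle is the direction (\ref{vs patching}) $\Rightarrow$ (\ref{torsor patching}): moving from vector space (equivalently, $\GL_n$-torsor) patching to torsor patching for an arbitrary linear algebraic group is where the nonabelian cohomology machinery of \cite[Theorem~2.4]{HHK:H1} does the genuine work.  The other directions are essentially cocycle manipulations that translate factorization data into basis changes and back.
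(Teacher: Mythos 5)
Your decomposition of the equivalence is different from the paper's.  The paper proves a cycle $(\textrm{i})\Rightarrow(\textrm{ii})\Rightarrow(\textrm{iii})\Rightarrow(\textrm{i})$, citing \cite[Proposition~2.2]{HHK:H1} and \cite[Theorem~2.3]{HHK:H1} for the first two arrows, and contributing only a new proof of $(\textrm{iii})\Rightarrow(\textrm{i})$: build the $\GL_n$-torsor patching problem with trivial pieces and transition functions $g_e$, descend it by $(\textrm{iii})$, and observe via Hilbert~90 that the resulting $F$-torsor is trivial, whence its trivialization yields the factorization.  Your route captures that same Hilbert-90 insight implicitly, since your $(\textrm{iii})\Rightarrow(\textrm{ii})\Rightarrow(\textrm{i})$ chain uses the triviality of finite-dimensional $F$-vector spaces/$\GL_n$-torsors at the end; those two hops look sound and amount to the paper's argument unwound through the vector-space category.

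The genuine gap is in your sketch of $(\textrm{ii})\Rightarrow(\textrm{iii})$, which you cannot skip since the cycle must close.  You induce along $A\hookrightarrow\GL_n$, descend the induced $\GL_n$-torsor via $(\textrm{ii})$, and then invoke ``the six-term exact sequence of \cite[Theorem~2.4]{HHK:H1}, combined with nonabelian descent for $\GL_n/A$'' to get the $A$-reduction over $F$.  This does not hold together as stated.  After Hilbert~90 the descended $\GL_n$-torsor is trivial, so an $A$-reduction over any $E$ is an $E$-point of the homogeneous space $\GL_n/A$; the local $A$-torsors give compatible points of $(\GL_n/A)(F_v)$ and $(\GL_n/A)(F_e)$, and the task is to produce a point of $(\GL_n/A)(F)$.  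The six-term sequence for $A$ (which is about $H^0$ and $H^1$ of $A$ itself, not of $\GL_n/A$) does not speak to this, and ``nonabelian descent for $\GL_n/A$'' is not a thing: $\GL_n/A$ is not a group.  What you actually need is the equalizer property for $(\GL_n/A)$-points over the inverse system (the analogue of Lemma~\ref{affine equalizer} for $F$ rather than $F\s$), which is automatic only when $\GL_n/A$ is affine or quasi-affine; in general one must first choose an observable embedding of $A$ so the quotient is quasi-affine, or argue via coordinate rings and comodule structures.  Moreover, patching is a statement about equivalence of \emph{categories}, so even given a bijection on isomorphism classes you still need to handle the gluing isomorphisms and morphisms carefully.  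This is precisely the direction the paper does not reprove but cites to \cite[Theorem~2.3]{HHK:H1}, and it is more delicate than your one-sentence sketch suggests.  If you want to avoid it entirely, follow the paper: prove $(\textrm{iii})\Rightarrow(\textrm{i})$ directly via Hilbert~90 and leave $(\textrm{ii})\Rightarrow(\textrm{iii})$ as a citation.
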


\begin{proof}
It was shown in \cite[Proposition~2.2]{HHK:H1} that~(\ref{sim fact}) is equivalent to~(\ref{vs patching}); and in 
\cite[Theorem~2.3]{HHK:H1} it was shown that~(\ref{vs patching}) implies~(\ref{torsor patching}).  It remains to show that~(\ref{torsor patching}) implies~(\ref
{sim fact}).  

Fix an orientation for $\Gamma$ and let $g = (g_e)_{e \in \mc E}\in \GL_n(\Fe)$.  
We wish to show that there exists
$h \in \GL_n(\Fv)$
such that $g = \pi_l(h) \pi_r(h)^{-1}$. 

Consider the patching problem 
for $\GL_n$-torsors over $F_\bullet$ that is given by trivial torsors 
$\mc T_\xi := \GL_{n,F_\xi}$
over $F_\xi$ for each $\xi \in \mc V \sqcup \mc E$, together with transition functions $\GL_{n,F_e} = (\mc T_{r(e)})_{F_e} \to
(\mc T_{l(e)})_{F_e} = \GL_{n,F_e}$ given by multiplication by $g_e \in \GL_n(F_e)$, for each $e \in \mc E$.
By hypothesis~(\ref{torsor patching}), there is a $\GL_n$-torsor $\mc T$ over $F$ that induces this patching problem.  
But $\mc T$ is trivial, since $H^1(F,\GL_n)=0$ by Hilbert's Theorem~90 (\cite[Theorem~29.2]{BofInv}); i.e.\ there is an isomorphism $\GL_{n,F} \to \mc T$.  The induced isomorphisms $\GL_{n,F_v} \to \mc T_v = \GL_{n,F_v}$ 
are given by multiplication by elements $h_v \in \GL_n(F_v)$.  Since $\mc T$ induces the given patching problem, 
we have $h_{l(e)} = g_e h_{r(e)} \in \GL_n(F_e)$ for every $e \in \mc E$.
Therefore $g = \pi_l (h) \pi_r (h)^{-1}$, with $h = (h_v)_{v \in \mc V} \in \GL_n(\Fe)$, as desired. 
\end{proof} 

\begin{thm} \label{loc-gl torsors}
Let $\Gamma$ be a connected graph, $F$ a field, and $F_\bullet$ a $\Gamma/F$-field.  Assume that patching holds for finite dimensional vector spaces over $F_\bullet$.  
Then a linear algebraic group 
$A$  over $F$ is factorizable over $F_\bullet$ if and only if 
$A$-torsors over $F$ satisfy a local-global principle over $F_\bullet$.
\end{thm}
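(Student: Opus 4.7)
The plan is to reduce both implications to the torsor patching statement from Theorem~\ref{equiv patching}. Since patching holds for finite dimensional vector spaces over $F_\bullet$ by hypothesis, that theorem tells us that patching also holds for torsors under any linear algebraic group over $F$. All the work is then in translating between an $A$-torsor patching problem, its triviality properties, and elements of $A$ over the various fields $F_v, F_e$. Fix an orientation on $\Gamma$ throughout.

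For the implication \emph{factorizability $\Rightarrow$ local-global principle}, I would start with an $A$-torsor $\mc T$ over $F$ such that each $\mc T_v := \mc T \times_F F_v$ is trivial; choose trivializations, i.e.\ sections $t_v \in \mc T(F_v)$ for every $v \in \mc V$. For each edge $e \in \mc E$ with vertices $l(e)$ and $r(e)$, the two sections $t_{l(e)}$ and $t_{r(e)}$ both give $F_e$-points of $\mc T_e$; since $\mc T_e$ is an $A_{F_e}$-torsor, there is a unique $a_e \in A(F_e)$ with $t_{l(e)} = t_{r(e)} \cdot a_e$ (using the right $A$-action on $\mc T$). By factorizability of $A$ over $F_\bullet$ there exist $a_v \in A(F_v)$ with $a_e = a_{l(e)} a_{r(e)}^{-1}$ for all $e$. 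Replacing $t_v$ by $t_v \cdot a_v^{-1}$ produces trivializations whose restrictions to each $F_e$ agree, i.e.\ define a morphism from the trivial patching problem to the patching problem induced by $\mc T$. By Theorem~\ref{equiv patching}(\ref{torsor patching}), isomorphisms of patching problems correspond to isomorphisms of the underlying global torsors; hence $\mc T$ is isomorphic to the trivial $A$-torsor over $F$, proving the local-global principle.

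For the converse, given $(a_e)_{e \in \mc E} \in \prod_{e \in \mc E} A(F_e)$, I would mimic the argument in the proof of Theorem~\ref{equiv patching}: form the $A$-torsor patching problem $\mc T_\bullet$ over $F_\bullet$ in which every local piece $\mc T_v = A_{F_v}$ is the trivial torsor and the transition isomorphism $(\mc T_{r(e)})_{F_e} \to (\mc T_{l(e)})_{F_e}$ is left multiplication by $a_e$. By Theorem~\ref{equiv patching}(\ref{torsor patching}) this patching problem is induced by some $A$-torsor $\mc T$ over $F$, and by construction each $\mc T_v$ is trivial. Applying the assumed local-global principle, $\mc T$ itself is trivial, so there is an isomorphism $A_F \to \mc T$ of $A$-torsors. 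Restricting to each vertex, the resulting trivialization $A_{F_v} \to \mc T_v = A_{F_v}$ is left multiplication by some $h_v \in A(F_v)$. The compatibility of these trivializations with the transition maps over each edge $e$ reads $h_{l(e)} = a_e h_{r(e)}$ in $A(F_e)$, i.e.\ $a_e = h_{l(e)} h_{r(e)}^{-1}$. Thus $(h_v)_{v \in \mc V}$ witnesses factorizability of the given tuple $(a_e)$.

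The only real obstacle is bookkeeping: keeping the left/right action conventions and the choice of orientation consistent between the definition of factorizability ($\pi_l(a) \pi_r(a)^{-1}$), the transition maps of a torsor patching problem, and the comparison of two sections of a torsor over an edge. Once that bookkeeping matches the conventions already fixed in Section~\ref{lgpsf} and in the proof of Theorem~\ref{equiv patching}, both implications follow by routine manipulation. No additional hypothesis on $A$ beyond being a (smooth) linear algebraic group is used, since the argument never invokes commutativity or any vanishing of $H^1$ for $A$ itself; the input $H^1(F,\GL_n) = 0$ has already been absorbed into Theorem~\ref{equiv patching}.
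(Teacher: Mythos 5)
Your proof is correct in substance but takes a genuinely different route from the paper. The paper's own proof is a one-line citation: it invokes the exactness of the six-term nonabelian cohomology sequence from \cite[Theorem~2.4]{HHK:H1}, which already packages the equivalence between surjectivity of $\pi_l\cdot\pi_r^{-1}$ and injectivity of $H^1(F,A)\to\prod_{v}H^1(F_v,A)$. You instead unwind everything directly from the equivalence of categories in Theorem~\ref{equiv patching}(\ref{torsor patching}), which is a self-contained argument that never needs the cohomology sequence. This is a legitimate alternative; it makes visible how a compatible choice of local trivializations of a patching problem is the same thing as a factorization, and it mirrors the argument the paper itself uses for the implication (\ref{torsor patching})$\Rightarrow$(\ref{sim fact}) in Theorem~\ref{equiv patching}. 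What the paper's citation buys is brevity and the fact that it simultaneously delivers the remaining pieces of the six-term sequence used elsewhere in the paper; what your argument buys is self-containment and a clearer picture of the non-commutative bookkeeping.

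One technical point you should nail down rather than leave as acknowledged "bookkeeping": with the convention $t_{l(e)}=t_{r(e)}\cdot a_e$, requiring the modified sections $t_v\cdot a_v$ to agree over $F_e$ forces $a_e=a_{r(e)}a_{l(e)}^{-1}$, whereas the factorization hypothesis yields $a_e=a_{l(e)}a_{r(e)}^{-1}$; these do not agree for non-commutative $A$ without an extra step. The fix is easy and worth stating explicitly: either apply factorizability to the tuple $(a_e^{-1})_e$, or use the remark at the end of Section~\ref{lgpsf} that factorizability is orientation-independent, so one may swap the roles of $l(e)$ and $r(e)$. With that sentence added, both implications go through exactly as you describe.
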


\begin{proof}
This assertion is contained in the exactness of the sequence given in 
\cite[Theorem~2.4]{HHK:H1}; i.e.\ $\pi_l \cdot \pi_r^{-1}$ is surjective if and only if $H^1(F,A) \to \prod_{v \in \mc V} H^1(F_v,A)$ is injective.
\end{proof}

Note that the hypothesis of Theorem~\ref{loc-gl torsors} does not
imply that the equivalent conditions in the conclusion of that theorem
necessarily hold.  (In particular, in Example~4.4 of~\cite{HHK} there
is a non-trivial obstruction to a local-global principle, by
Corollaries~5.6 and~5.5 of~\cite{HHK:H1}).  Thus patching need not
imply factorization over $F_\bullet$ for all linear algebraic groups
over $F$.  But as shown in the next section (Corollary~\ref{patching
yields sep fact}), patching does imply factorization for all linear
algebraic groups if we are allowed to pass to the separable closure of
$F$.  This will be useful in obtaining local-global principles for
higher cohomology.

\subsection{Separable factorization} \label{sep fact sect}

As asserted in Theorems~\ref{equiv patching} and~\ref{loc-gl torsors}, there are relationships between factorization conditions on the one hand, and patching and local-global properties on the other.  Below, in 
Theorem~\ref{equiv factorization} and Corollary~\ref{patching yields sep fact}, we prove related results of this type, concerning ``separable factorization'', which will be needed later in applying the results of Section~\ref{abstract MV}.  We also prove a result (Proposition~\ref{sep fact ex seq}) that will be used in obtaining our long exact sequence in Section~\ref{abstract MV}, and hence our local-global principle there.

\subsubsection{Separably factorizable group schemes} \label{sep fact subsub}

Let $F_\bullet$ be a $\Gamma/F$-field, and write $\Gal(F)$ for the absolute
Galois group $\Gal(F\s/F)$.
Given an $F$-scheme $A$, we have morphisms
$A(F\s) \to A(F_v \otimes_F F\s)$ for each
vertex $v \in \mc V$, and 
$A(F_v \otimes_F F\s) \to A(F_e \otimes_F F\s)$ when $v \in \mc V$ is a vertex of $\Gamma$ on the edge $e \in \mc E$.  
These are induced by the inclusions $F\s \to F_v \otimes_F
F\s$ and $F_v \otimes_F F\s \to F_e \otimes_F F\s$.

If $A$ is an $F$-scheme, and $L \subseteq L'$ are field extensions of $F$, then the natural map $A(L) \to A(L')$ is an inclusion.  (This is immediate if $A$ is affine, and then follows in general.)  In particular, given
a $\Gamma/F$-field $F_\bullet$ as above,
the maps $A(F) \to A(F_v)$ and $A(F_v) \to A(F_e)$ are injective
for $v$ a vertex of an edge $e$ in $\Gamma$. 

If we choose an orientation for the graph $\Gamma$, then as in
Section~\ref{lgpsf} we may define maps $\pi_l,\pi_r: \Av \to \Ae$ by 
$(\pi_l(a))_e = a_{l(e)}$ and $(\pi_r(a))_e = a_{r(e)}$.

\begin{lem} \label{affine equalizer}
Consider an affine $F$-scheme $A$, a graph $\Gamma$ with a choice of orientation, and a 
$\Gamma/F$-field $F_\bullet$.
Then 
\[
\xymatrix{
A(F^{sep}) \ar[r] & \Av \ar@<-.7ex>[r]_{\pi_r} \ar@<+.7ex>[r]^{\pi_l} &
\Ae
}
\]
is an equalizer diagram of sets.
\end{lem}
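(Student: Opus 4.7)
The plan is to reduce the assertion to an equalizer property of $F$-algebras by applying the functor of points. Since $A$ is affine, write $A = \Spec R$ for some $F$-algebra $R$; then for every $F$-algebra $L$ we have $A(L) = \Hom_F(R,L)$. The representable functor $\Hom_F(R,-)$ preserves all limits, and in particular sends products of $F$-algebras to products of sets and equalizers to equalizers. Hence it suffices to show that
\[
F\s \longrightarrow \prod_{v \in \mc V}(F_v \otimes_F F\s) \rightrightarrows \prod_{e \in \mc E}(F_e \otimes_F F\s)
\]
is an equalizer diagram of $F$-algebras, with the two right-hand arrows induced by $\pi_l$ and $\pi_r$.

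By the definition of a $\Gamma/F$-field (i.e.\ $F_\Gamma = F$), the sequence of $F$-modules
\[
0 \longrightarrow F \longrightarrow \prod_{v \in \mc V} F_v \xrightarrow{\ \pi_l - \pi_r\ } \prod_{e \in \mc E} F_e
\]
is exact. Since $\Gamma$ is finite, the two products are finite, so they coincide with finite direct sums and therefore commute with the functor $- \otimes_F F\s$. Moreover $F\s$ is flat over $F$ (every field extension is flat), so tensoring the above exact sequence with $F\s$ preserves exactness. This produces the exact sequence
\[
0 \longrightarrow F\s \longrightarrow \prod_{v \in \mc V}(F_v \otimes_F F\s) \xrightarrow{\ \pi_l - \pi_r\ } \prod_{e \in \mc E}(F_e \otimes_F F\s),
\]
which is precisely the desired equalizer diagram at the level of $F$-algebras.

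I do not anticipate any serious obstacle: the proof is a formal combination of (i) the description of $F_\Gamma$ as the equalizer of $\pi_l$ and $\pi_r$, (ii) flatness of the field extension $F\s/F$, (iii) the compatibility of finite products with tensor product, and (iv) left exactness of the representable functor $\Hom_F(R,-)$. The only point that requires a brief sanity check is that the forgetful functor from commutative $F$-algebras to $F$-modules preserves equalizers, so that the $F$-algebra equalizer agrees with the module-theoretic kernel of $\pi_l - \pi_r$ used in step (ii)--(iii).
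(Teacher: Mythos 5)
Your proof is correct and follows essentially the same route as the paper's: tensor the defining exact sequence $0 \to F \to \prod_v F_v \to \prod_e F_e$ with the flat $F$-module $F\s$, and then transfer the resulting ring-level equalizer to the level of $A$-points via the representability $A(L)=\Hom_F(R,L)$ with $R$ the coordinate ring of $A$. The paper phrases the last step simply as the universal property of the inverse limit of rings, while you invoke the general fact that representable functors preserve limits; these are the same observation.
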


\begin{proof}
The hypothesis that $F$ equals $F_{\Gamma}$ is equivalent to
having an exact sequence of $F$-vector spaces
\(0 \to F \to \Fv \to \Fe\), given by 
$\pi_l \cdot \pi_r^{-1}$
on the
right. 
Since $F\s$ is a flat $F$-module,
we have an exact sequence
\(0 \to F\s \to \Fv \otimes F\s \to \Fe \otimes F\s\).
This in turn tells us that in the category of rings,
\[\displaystyle F\s = \lim_{\underset{\xi \in \mc V \sqcup
\mc E}{\longleftarrow}} (F_\xi \otimes F\s).\]
Write $A = \Spec(R)$. By the inverse limit property above, it
follows that a homomorphism $R \to F\s$ is equivalent to a homomorphism
$\phi: R \to \Fv \otimes F\s$
such that $\pi_l \phi = \pi_r \phi$, where $\pi_l, \pi_r : \Fv
\otimes F\s
\to \Fe \otimes F\s$ are the two projections.  This gives the desired equalizer diagram.
\end{proof}

This lemma, and the notion of factorizability in the previous section, motivate the following definition.  

\begin{defn} \label{factorizable definition}
Let $F_\bullet$ be a $\Gamma/F$-field, and suppose that $A$ is a group scheme over
$F$. We say that $A$ is \textit{separably factorizable} (over $F_\bullet$)
if the pointed set map $\pi_l \cdot \pi_r^{-1}: \Av \to \Ae$ 
is surjective
for some (hence every) orientation on $\Gamma$.
\end{defn}

Lemma~\ref{affine equalizer} and Definition~\ref{factorizable definition} then yield:

\begin{prop} \label{sep fact ex seq}
Let $F_\bullet$ be a $\Gamma/F$-field, and let $A$ be a group scheme over $F$.  Choose any orientation on $\Gamma$, and take the associated maps $\pi_l, \pi_r$.  Then 
$A$ is   
separably factorizable if and only if 
\[
\xymatrix @C=1.5cm{
0 \ar[r] & A(F\s) \ar[r] & \Av \ar[r]^{\pi_l \cdot \pi_r^{-1}} &  \Ae \ar[r] & 0
}
\]
is an exact sequence of pointed $\Gal(F)$-sets (and in fact an exact sequence of Galois modules in the case that $A$ is commutative).
\end{prop}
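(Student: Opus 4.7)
The plan is to deduce this proposition almost immediately from Lemma~\ref{affine equalizer} and Definition~\ref{factorizable definition}, together with a routine check of $\Gal(F)$-equivariance.

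First, I would observe that Lemma~\ref{affine equalizer} already supplies exactness of the sequence at the first two positions. Indeed, translating the equalizer diagram
\[
A(F\s) \to \Av \rightrightarrows \Ae
\]
into the language of pointed sets with the combined map $\pi_l \cdot \pi_r^{-1}$: the injectivity of $A(F\s) \to \Av$ is exactness at $A(F\s)$, and the assertion that an element of $\Av$ is equalized by $\pi_l$ and $\pi_r$ precisely when it comes from $A(F\s)$ is exactness at $\Av$. Strictly speaking Lemma~\ref{affine equalizer} is stated for affine $F$-schemes, whereas here $A$ is only assumed to be a finite-type group scheme; but since every morphism from the spectrum of a field to $A$ factors through an affine open, the equalizer property extends verbatim to the setting of the proposition.

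Second, exactness on the right, i.e.\ surjectivity of $\pi_l \cdot \pi_r^{-1} : \Av \to \Ae$, is literally the definition of $A$ being separably factorizable over $F_\bullet$ (Definition~\ref{factorizable definition}). Combining these two observations gives the claimed if-and-only-if.

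Third, for the $\Gal(F)$-equivariant refinement, I would note that $\Gal(F) = \Gal(F\s/F)$ acts on $F\s$, hence on each $F_\xi \otimes_F F\s$ through the right-hand tensor factor, and therefore on each set $A(F_\xi \otimes_F F\s)$. Every map in the sequence --- the inclusion $A(F\s) \to \Av$, the projections $\pi_l, \pi_r$, and their combination $\pi_l \cdot \pi_r^{-1}$ --- is induced by an $F$-algebra homomorphism, so they all commute with this action, and the sequence is automatically one of pointed $\Gal(F)$-sets. When $A$ is commutative, the group law on $A$ upgrades each $A(F_\xi \otimes_F F\s)$ to an abelian group on which $\Gal(F)$ acts by group automorphisms, $\pi_l \cdot \pi_r^{-1}$ becomes a genuine homomorphism of $\Gal(F)$-modules, and the sequence of pointed sets becomes an exact sequence of Galois modules.

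The main obstacle is essentially nonexistent: the substantive content has been pre-loaded into Lemma~\ref{affine equalizer}, whose proof relied on flatness of $F\s$ over $F$. The one point that deserves a moment of care is the mild extension of Lemma~\ref{affine equalizer} from affine $F$-schemes to finite-type group schemes when working only with field-valued points, as noted above.
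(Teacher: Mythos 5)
Your proof matches the paper's: the paper offers no separate argument, simply noting that Lemma~\ref{affine equalizer} and Definition~\ref{factorizable definition} ``then yield'' the proposition, and you have correctly unpacked how each supplies exactness at the appropriate spot, together with the routine $\Gal(F)$-equivariance check. Your observation that Lemma~\ref{affine equalizer} is stated only for affine $F$-schemes while the proposition allows an arbitrary finite-type group scheme is a genuine subtlety which the paper glosses over; however, the repair you propose is imprecise as stated, because $F_\xi \otimes_F F\s$ is not a field (it is a filtered colimit over finite separable $E/F$ of the finite products of fields $F_\xi \otimes_F E$), so the statement that ``every morphism from the spectrum of a field to $A$ factors through an affine open'' does not directly apply to the ring-valued points appearing in the sequence. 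A careful patch would instead use that $A$, being of finite type over a field, is of finite presentation, so $A(F_\xi \otimes_F F\s) = \varinjlim_E A(F_\xi \otimes_F E)$; since filtered colimits of sets commute with equalizers, one then reduces to a fixed finite $E$, where each $F_\xi \otimes_F E$ is a finite product of fields, and one uses the Zariski sheaf property of $A$ together with separatedness of group schemes over a field to conclude. Since all the paper's downstream invocations of the proposition are for affine group schemes ($\GL_n$, $\Gm$, $\Z/m\Z(r)$), this detail does not affect any application — but if you want to keep the proposition in the stated generality, the one-sentence fix should be replaced by the more careful reduction above.
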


\subsubsection{Patching and separable factorization}

The following theorem and its corollary complement Theorems~\ref{equiv patching} and~\ref{loc-gl torsors}.

\begin{thm} \label{equiv factorization}
Let $\Gamma$ be a connected graph, $F$ a field, and $F_\bullet$ a $\Gamma/F$-field.  Then the following conditions are equivalent:
\begin{enumerate}
\renewcommand{\theenumi}{\roman{enumi}}
\renewcommand{\labelenumi}{(\roman{enumi})}
\item \label{fact GL}
$\GL_n$ is factorizable over $F_\bullet$, for all $n\ge 1$.
\item \label{sep fact GL}
$\GL_n$ is separably factorizable over $F_\bullet$, for all $n\ge 1$.
\item \label{sep fact all}
Every linear algebraic group over $F$ is separably factorizable over $F_\bullet$.
\end{enumerate}
\end{thm}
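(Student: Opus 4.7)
The plan is to prove (iii)$\Rightarrow$(ii) trivially (by specializing to $A=\GL_n$) and then establish (i)$\Leftrightarrow$(ii) and (ii)$\Rightarrow$(iii). The key insight throughout is that a separable factorization problem can be reduced either to an ordinary factorization problem over $F_\bullet$ (via Weil restriction) or to the triviality of $H^1$ over $F\s$ combined with the equalizer statement of Lemma~\ref{affine equalizer}.

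For (ii)$\Rightarrow$(i), I would use Galois descent. Given $g\in\prod_e\GL_n(F_e)$, (ii) produces $\til h\in\prod_v\GL_n(F_v\otimes F\s)$ with $\pi_l(\til h)\pi_r(\til h)^{-1}=g$. For each $\sigma\in\Gal(F)$, the Galois-invariance of $g$ makes $\sigma(\til h)$ another factorization of $g$, so the identity $\til h_{l(e)}\til h_{r(e)}^{-1}=g_e$ forces $\til h_{l(e)}^{-1}\sigma(\til h_{l(e)})=\til h_{r(e)}^{-1}\sigma(\til h_{r(e)})$ on each edge. Thus $\til h^{-1}\sigma(\til h)\in\prod_v\GL_n(F_v\otimes F\s)$ has $\pi_l=\pi_r$, and Lemma~\ref{affine equalizer} places it diagonally in $\GL_n(F\s)$. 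This defines a 1-cocycle $c(\sigma):=\til h^{-1}\sigma(\til h)$ with values in $\GL_n(F\s)$; by Hilbert~90 it is a coboundary $c(\sigma)=k^{-1}\sigma(k)$ for some $k\in\GL_n(F\s)$, and then $h:=\til h k^{-1}$ is $\Gal(F)$-invariant, lies in $\prod_v\GL_n(F_v)$, and satisfies $\pi_l(h)\pi_r(h)^{-1}=g$ after the inserted $k^{-1}k$ cancels.

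For (i)$\Rightarrow$(ii), I would reduce a separable factorization problem for $\GL_n$ to an ordinary factorization problem for a Weil restriction. Given $g\in\prod_e\GL_n(F_e\otimes F\s)$, the finiteness of $\mc E$ supplies a finite Galois extension $L/F$ with $g\in\prod_e\GL_n(F_e\otimes_F L)$. Setting $\til A:=R_{L/F}\GL_{n,L}$, a linear algebraic group over $F$ with $\til A(R)=\GL_n(R\otimes_F L)$, I view $g\in\prod_e\til A(F_e)$ as a patching problem for $\til A$-torsors (trivial torsors at the vertices with transitions $g_e$ at the edges). By (i) and Theorem~\ref{equiv patching}, patching holds for $\til A$-torsors, so this problem comes from a global $\til A$-torsor over $F$ whose class lies in $H^1(F,\til A)$. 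Shapiro's lemma plus Hilbert~90 gives $H^1(F,\til A)=H^1(L,\GL_n)=0$, so the torsor is trivial, and a trivialization yields $\til h\in\prod_v\til A(F_v)=\prod_v\GL_n(F_v\otimes_F L)\subseteq\prod_v\GL_n(F_v\otimes F\s)$ factoring $g$.

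For (ii)$\Rightarrow$(iii), fix a closed embedding $A\hookrightarrow\GL_n$ and consider the quasi-projective quotient scheme $X:=A\backslash\GL_n$ with quotient map $q:\GL_n\to X$, which is right-$\GL_n$-equivariant and invariant under left multiplication by $A$. Given $g\in\prod_e A(F_e\otimes F\s)$, view $g$ in $\GL_n$ and apply~(ii) to obtain $\til h\in\prod_v\GL_n(F_v\otimes F\s)$ factoring $g$. Since $g_e\in A$, one has $q(\til h_{l(e)})=q(g_e\til h_{r(e)})=q(\til h_{r(e)})$, so $\pi_l q(\til h)=\pi_r q(\til h)$ in $\prod_e X(F_e\otimes F\s)$. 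The analog of Lemma~\ref{affine equalizer} for the quasi-projective scheme $X$ produces a single $\bar k\in X(F\s)$ with $q(\til h_v)=\bar k$ for every $v$; since $F\s$ is separably closed and $A$ is smooth, $H^1(F\s,A)=0$, so $\GL_n(F\s)\to X(F\s)$ is surjective and $\bar k$ lifts to some $k\in\GL_n(F\s)$. Then $\til h_v k^{-1}\in A(F_v\otimes F\s)$, and $h:=\til h k^{-1}\in\prod_v A(F_v\otimes F\s)$ satisfies $\pi_l(h)\pi_r(h)^{-1}=g$ after $k^{-1}k$ cancels. The main technical obstacle is the extension of the equalizer lemma from affine to quasi-projective schemes: I would handle this by covering $X$ with affine opens and using that $\Spec F\s$ is a single point, so a compatible family of $F_v\otimes F\s$-points of $X$ factors through a common affine chart, reducing to the affine case already treated.
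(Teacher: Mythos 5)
Your proposal reorganizes the cycle of implications differently from the paper: where the paper proves (i)$\Rightarrow$(iii)$\Rightarrow$(ii)$\Rightarrow$(i), you prove (i)$\Rightarrow$(ii) (via Weil restriction, essentially the $\GL_n$ special case of the paper's (i)$\Rightarrow$(iii)), then (ii)$\Rightarrow$(i) (by Galois descent with an explicit $1$-cocycle $\sigma\mapsto\til h^{-1}\sigma(\til h)$, a hands-on unwinding of the paper's cohomological long-exact-sequence argument), and finally (ii)$\Rightarrow$(iii) via the quotient $X=A\backslash\GL_n$. The first two steps are correct; the cocycle argument for (ii)$\Rightarrow$(i) is a legitimate and slightly more explicit variant of what the paper does via Proposition~\ref{sep fact ex seq} and the vanishing of $H^1(F,\GL_n)$.

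The step (ii)$\Rightarrow$(iii) has a genuine gap. You invoke an ``analog of Lemma~\ref{affine equalizer} for the quasi-projective scheme $X=A\backslash\GL_n$,'' but the proof of Lemma~\ref{affine equalizer} uses in an essential way that $A$ is affine: a morphism $\Spec S\to\Spec R$ is a ring map $R\to S$, and $\operatorname{Hom}_{\mathrm{ring}}(R,-)$ commutes with the finite inverse limit of rings computing $F\s$. For a non-affine $X$ the functor $X(-)$ is not corepresented by a ring, and there is no formal reason it should send the inverse limit $F\s=\varprojlim(F_\xi\otimes_F F\s)$ to the limit of the $X(F_\xi\otimes_F F\s)$. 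Your proposed fix --- that ``a compatible family of $F_v\otimes F\s$-points of $X$ factors through a common affine chart, reducing to the affine case'' because $\Spec F\s$ is a single point --- is circular: the schemes $\Spec(F_\xi\otimes_F F\s)$ are emphatically not single points (they are profinite sets of points once $F_\xi/F$ has nontrivial residual Galois theory), and the fact that $\Spec F\s$ is one point is only available once you already know the family descends to $\Spec F\s$, which is precisely the point at issue. Note, in particular, that the natural attempt to upgrade this to fpqc descent breaks down because the constraint set $\prod_e(F_e\otimes F\s)$ is indexed only by \emph{edges} of $\Gamma$, not by all pairs of vertices, so $\Spec\bigl(\prod_e F_e\otimes F\s\bigr)$ does not surject onto $\Spec(B\otimes_{F\s}B)$ for $B=\prod_v(F_v\otimes F\s)$ unless $\Gamma$ is a complete graph. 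The paper sidesteps this difficulty entirely by proving (i)$\Rightarrow$(iii) directly: take the Weil restriction $A'=R_{L/F}A_L$ of the given $A$ (not of $\GL_n$), run the patching-problem coboundary argument for $A'$, identify the resulting class in $H^1(F,A')\cong H^1(L,A)$ via Shapiro, and enlarge $L$ to kill it. This way one never leaves the affine world and never needs any equalizer statement for a quotient variety.
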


\begin{proof}
We begin by showing that~(\ref{fact GL}) implies~(\ref{sep fact all}).  Fix an orientation for $\Gamma$.
Let $A$ be a 
linear algebraic group over $F$, 
and suppose we are given $g \in \Ae$. 
We wish to show that there exists
$h \in \Av$
such that $g = \pi_l(h) \pi_r(h)^{-1}$. 

Since $\mc E$ is finite, there is a finite separable field extension $L/F$ such
that $g$ is the image of $g' \in \prod_{e \in \mc E} A(F_e \otimes_F L)$. 
Let $A' = R_{L/F}A_L$, the Weil restriction of $A_L = A \times_F L$
from $L$ to $F$ (see \cite{BLR:Neron}, Section~7.6); this is a linear algebraic group over $F$.  We may then view $g' \in \prod_{e \in \mc E} A'(F_e)$.

Since $\GL_n$ is factorizable over $F_\bullet$ by condition~(\ref{fact GL}), Theorem~\ref{equiv patching} implies that patching holds for finite-dimensional vector spaces over $F_\bullet$.  Thus~\cite[Theorem~2.4]{HHK:H1} applies, 
giving us a six-term cohomology sequence for $A'$; 
and we may consider the image of $g$ under 
the coboundary map $\prod_{e \in \mc E} A'(F_e) \to
H^1(F,A')$.  This image defines an $A'$-torsor $\mc T'$ over $F$ (viz.\ the solution to the patching problem that consists of trivial torsors over each $F_v$ and for which the transition functions are given by $g'$).  But $H^1(F,A')$ may be identified with $H^1(L,A)$ by Shapiro's Lemma~(\cite{Se:GC}, Corollary to Proposition~I.2.5.10), since 
$A'(F\s)$ is the Galois module induced from $A(F\s)$ via the inclusion $\Gal(L) \to \Gal(F)$.  So $\mc T'$ corresponds to an $A$-torsor $\mc T$ over $L$.  
There is then a finite separable field
extension $E/L$ 
over which $\mc T$ becomes trivial.
After replacing $L$ by $E$, we may assume that $\mc T$ and hence $\mc T'$ is trivial.  Hence by the exactness of the six-term sequence in~\cite[Theorem~2.4]{HHK:H1}, $g'$ is the image of an element 
$h' \in \prod_{v \in \mc V} A'(F_v) = \prod_{v \in \mc V} A(F_v \otimes_F L)$ 
under $\pi_l \cdot \pi_r^{-1}$. The image $h \in \Av$ of $h'$ is then as desired, proving that condition~(\ref{sep fact all}) holds.

Condition~(\ref{sep fact all}) trivially implies condition~(\ref{sep fact GL}).  It remains to show that condition~(\ref{sep fact GL}) implies condition~(\ref{fact GL}).

If condition~(\ref{sep fact GL}) holds, then Proposition~\ref{sep fact ex seq}
yields a short exact sequence of
pointed $\Gal(F)$-sets
\[
\xymatrix 
@C=1cm
{
0 \ar[r] & \GL_n(F\s) \ar[r] & {{\prod_{v \in \mc V} \GL_n(F_v \otimes_F F\s)}}
\ar[r]^{\pi_l \cdot \pi_r^{-1}} &  {{\prod_{e \in \mc E} \GL_n(F_e \otimes_F F\s)}} \ar[r] & 0.
}
\]
This in turn yields an exact sequence of pointed sets in Galois cohomology that begins
\[0 \to H^0(F,\GL_n) \to \prod_{v \in \mc V} H^0(F,(\GL_n)_v)
\to \prod_{e \in \mc E} H^0(F,(\GL_n)_e) \to H^1(F,\GL_n).\]
But the last term vanishes by Hilbert's Theorem~90.  The remaining short exact sequence is then equivalent to the condition that $\GL_n$ is factorizable over $F_\bullet$, i.e.\ condition~(\ref{fact GL}).  
\end{proof}

\begin{cor} \label{patching yields sep fact}
Let $\Gamma$ be a connected graph, $F$ a field, and $F_\bullet$ a
$\Gamma/F$-field.  Then patching holds for finite dimensional vector
spaces over $F_\bullet$ if and only if every linear algebraic group over $F$ is separably factorizable over $F_\bullet$.
\end{cor}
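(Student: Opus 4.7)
The plan is to deduce this corollary directly by chaining together Theorems~\ref{equiv patching} and~\ref{equiv factorization}, which together already establish every implication we need. In particular, no new argument should be required: this corollary is a formal consequence, and the main work has been done in proving those two theorems.

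More concretely, I would argue as follows. Suppose that patching holds for finite dimensional vector spaces over $F_\bullet$. By Theorem~\ref{equiv patching}, specifically the equivalence of conditions (\ref{sim fact}) and (\ref{vs patching}) there, this is equivalent to $\GL_n$ being factorizable over $F_\bullet$ for all $n \ge 1$. Applying Theorem~\ref{equiv factorization}, specifically the equivalence of conditions (\ref{fact GL}) and (\ref{sep fact all}) there, this is in turn equivalent to every linear algebraic group over $F$ being separably factorizable over $F_\bullet$.

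Conversely, if every linear algebraic group over $F$ is separably factorizable over $F_\bullet$, then in particular $\GL_n$ is separably factorizable for all $n \ge 1$, which by Theorem~\ref{equiv factorization} (the equivalence of (\ref{sep fact GL}) with (\ref{fact GL})) gives factorizability of $\GL_n$ for all $n \ge 1$, and then Theorem~\ref{equiv patching} again yields patching for finite dimensional vector spaces. Since both directions follow formally from the previously established equivalences, there is no genuine obstacle: the corollary is essentially a bookkeeping statement that records the bridge between the vector space patching condition and separable factorization for arbitrary linear algebraic groups, which will be the form needed when combining with the results of Section~\ref{abstract MV}.
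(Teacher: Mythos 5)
Your proposal is correct and follows exactly the paper's own argument: the corollary is an immediate formal consequence of chaining Theorem~\ref{equiv patching} (equivalence of vector space patching with factorizability of $\GL_n$) and Theorem~\ref{equiv factorization} (equivalence of factorizability of $\GL_n$ with separable factorizability of all linear algebraic groups). The paper states this in one sentence; you spell out both directions, but the content is identical.
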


\begin{proof}
This is immediate from Theorem~\ref{equiv patching} and 
Theorem~\ref{equiv factorization}, which assert that these two conditions are each equivalent to $\GL_n$ being factorizable over $F_\bullet$ for all $n \ge 1$.
\end{proof}

\subsection{Globally dominated field extensions and cohomology} \label{gl dom sect}

To obtain our results, we will want to relate the cohomologies $H^n(F,
A)$ and $H^n(F_\xi, A)$ for $\xi$ a vertex or edge of $\Gamma$.  One
difficulty with this in general is the potential difference between
the absolute Galois groups of $F$ and $F_\xi$.  To bridge this gap, 
we will use the Galois module $A(F_\xi \otimes_F F\s)$, which was studied in
Section~\ref{sep fact subsub}.  Its cohomology $H^n(\Gal(F),A(F_\xi \otimes_F F\s))$ is meant to approximate the cohomology group
$H^n(F_\xi, A)$.  

Our strategy will be carried out using the 
notion of ``global domination,'' which we introduce and study below.
The condition that a Galois module has globally dominated cohomology
will provide an important ingredient in demonstrating the existence of
Mayer-Vietoris type sequences and local-global principles for its
Galois cohomology groups. These applications are developed in
Section~\ref{abstract MV}.

\subsubsection{Globally dominated extensions}

\begin{defn}
Fix a field $F$.  For any field extension $L/F$, with separable closure $L\s$, let $L\gd$ denote the compositum of $L$ and $F\s$
taken within $L\s$.  If $E/L$ is a separable algebraic field extension, we say that $E/L$ is \textit{globally dominated} (with respect to $F$) if $E$ is contained in $L\gd$.
\end{defn}

Thus a separable algebraic field extension $E/L$ is globally dominated if and only if $E$ is
contained in some compositum $E' L \subseteq L\s$, where $E'/F$ is a separable
algebraic field extension.  Also, the subfield $L\gd \subseteq L\s$ can be characterized as the maximal globally dominated field extension of $L$.
Since the extension $F\s/F$ is Galois with group $\Gal(F)$, it follows that the extension $L\gd/L$ is Galois and that
$\Gal\gd(L) := \Gal(L\gd/L)$ can be identified with a subgroup of $\Gal(F)$.

\begin{lem} \label{global cohomology lem}
Let $L/F$ be a field extension, and let $A$ be a commutative group
scheme defined over $F$. Then we may identify:
\[H^n(\Gal(F), A(L \otimes_F F\s)) = 
H^n(\Gal\gd(L), A(L\gd)).\]
\end{lem}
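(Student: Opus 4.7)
The strategy is to identify $A(L \otimes_F F\s)$ as a (continuously) coinduced $\Gal(F)$-module from the subgroup $\Gal\gd(L) \subseteq \Gal(F)$ acting on $A(L\gd)$, and then invoke Shapiro's lemma for profinite group cohomology.  The identification will come from a ring-theoretic decomposition of $L \otimes_F F\s$ as a product indexed by the coset space $\Gal(F)/\Gal\gd(L)$.

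First I would set $L_0 := L \cap F\s$ inside $L\s$, write $H := \Gal\gd(L) = \Gal(L\gd/L)$, and use the restriction isomorphism $\Gal(L\gd/L) \iso \Gal(F\s/L_0)$ (which holds because $L\gd = L \cdot F\s$ with $L \cap F\s = L_0$) to view $H$ as a closed subgroup of $\Gal(F)$.

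The key step is the $\Gal(F)$-equivariant ring decomposition
\[
L \otimes_F F\s \;\cong\; \prod_{\sigma \in \Gal(F)/H} L\gd,
\]
where $\Gal(F)$ acts on the right by permuting the factors through its left action on the coset space, and acts on the distinguished (identity-coset) factor through $H$ via its natural action on $L\gd$.  I would verify this first at the level of finite Galois subextensions $E/F$ inside $F\s$: for $E_0 := L_0 \cap E$, Galois theory gives $L_0 \otimes_F E \cong \prod_{\Hom_F(E_0,E)} E$, and tensoring with $L$ over $L_0$ (using linear disjointness of $L$ and $F\s$ over $L_0 = L \cap F\s$, so that $L \otimes_{L_0} E \cong L \cdot E$) yields a compatible system of decompositions.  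Passing to the colimit over $E$ produces the displayed isomorphism.

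Taking $A$-points, since $A$ is of finite type and every element of $L \otimes_F F\s$ is defined over some finite subextension, we obtain
\[
A(L \otimes_F F\s) \;\cong\; \mathrm{CoInd}_H^{\Gal(F)} A(L\gd)
\]
as discrete $\Gal(F)$-modules, with the coinduction understood in the continuous sense for profinite groups.  Shapiro's lemma (e.g.\ \cite[I.2.5]{Se:GC}) then gives
\[
H^n(\Gal(F),\, A(L \otimes_F F\s)) \;\cong\; H^n(H, A(L\gd)) \;=\; H^n(\Gal\gd(L),\, A(L\gd)),
\]
which is the desired identification.  The main obstacle will be the careful verification of the ring decomposition in the presence of possible transcendental or inseparable behavior in $L/F$, and ensuring that the resulting identification on $A$-points is continuous in the profinite topology, so that Shapiro's lemma applies cleanly even when $H$ has infinite index in $\Gal(F)$.
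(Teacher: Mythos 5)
Your proposal is correct and rests on the same core idea as the paper's proof: realize $A(L \otimes_F F\s)$ as a $\Gal(F)$-module (co)induced from the closed subgroup $\Gal\gd(L)$ and invoke Shapiro's lemma, with the (co)induced structure coming from the decomposition of $L \otimes_F E$ into a product of fields for finite Galois $E/F$. The difference is one of execution: the paper carries out the reduction at finite levels, writing each side as a compatible limit of groups $H^n(\Gal(E/F), A(L\otimes_F E))$ and $H^n(\Gal(LE/L), A(LE))$ and applying finite-group Shapiro to $A(L\otimes_F E)\cong \Ind_{G_1}^G A(E_1)$, whereas you work directly with the profinite group and continuous coinduction. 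One imprecision worth fixing: the displayed ring isomorphism $L\otimes_F F\s \cong \prod_{\Gal(F)/H} L\gd$ is not a literal infinite product when $H=\Gal\gd(L)$ has infinite index in $\Gal(F)$; the correct object is the filtered colimit of the finite products $L\otimes_F E$, i.e.\ the ring of \emph{locally constant} $L\gd$-valued functions on $\Gal(F)/H$. Taking $A$-points of this colimit (not of the full product) yields the correct discrete coinduced module in Serre's sense, and that is precisely what profinite Shapiro requires — you flag the continuity issue at the end, which covers this; alternatively, the finite-level/limit formulation of the paper sidesteps the subtlety entirely. Your use of linear disjointness of $L$ and $F\s$ over $L\cap F\s$ is legitimate since $F\s/F$ is Galois.
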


\begin{proof}
We may identify the group 
\(H^n(\Gal(F), A(L \otimes F\s)\))
as a limit of groups 
\[H^n(\Gal(E/F), A(L \otimes E)),\]
and the group 
\(H^n(\Gal\gd(L), A(L\gd))\) 
as a limit of groups 
\[H^n(\Gal(LE/L), A(LE)),\]
where both limits are taken
over finite Galois extensions $E/F$, and where $LE$ is a compositum of $L$ and
$E$. Therefore the result will follow from a (compatible) set of
isomorphisms
\[H^n(\Gal(E/F), A(L \otimes E)) \cong H^n(\Gal(LE/L),
A(LE)).\]

Write $L \otimes E = \prod_{i = 1}^m E_i$ for finite Galois field extensions
$E_i/L$. We can also choose $LE = E_1$. We have $A(L \otimes E)
= \prod_i A(E_i)$. Let $G = \Gal(E/F)$ and let
$G_1$ be the stabilizer of $E_1$ (as a set) with respect to the action of $G$ on
$L \otimes_F E$. Then we may identify the $G$-modules $A(L \otimes E)$ and
$\Ind_{G_1}^G A(E_1)$. We therefore have
\[H^n(G, A(L \otimes E)) \cong 
H^n(G, \Ind_{G_1}^G A(E_1)) \cong 
H^n(G_1, A(E_1)) =
H^n(\Gal(LE/L), A(LE))\]
by Shapiro's Lemma (\cite{Se:GC}, Corollary to Proposition~I.2.5.10), as desired.
\end{proof}

\subsubsection{Globally dominated cohomology}

It remains to compare the cohomology
with respect to the maximal globally dominated extension and the full
Galois cohomology.  For this we make the following

\begin{defn}
Let $A$ be a commutative group scheme over $F$ and $L/F$ a field
extension. We say that \textit{the cohomology of $A$ over $L$ is
globally dominated} (with respect to $F$) if $H^n(L\gd,A)=0$ for every $n>0$.
\end{defn}

\begin{prop} \label{refinement}
Let $A$ be a commutative group scheme over $F$ and $L/F$ a field
extension. Suppose that the cohomology of $A$ over $L$ is globally
dominated. Then we have isomorphisms:
\[H^n(\Gal(F), A(L \otimes F\s)) = H^n(\Gal\gd(L), A(L\gd)) =
H^n(\Gal(L), A(L\s))\]
for all $n \geq 0$.
\end{prop}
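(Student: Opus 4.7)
The first equality $H^n(\Gal(F), A(L \otimes_F F\s)) = H^n(\Gal\gd(L), A(L\gd))$ is exactly the content of Lemma~\ref{global cohomology lem}, and does not require the global domination hypothesis. So the new content of the proposition is the identification of $H^n(\Gal\gd(L), A(L\gd))$ with $H^n(\Gal(L), A(L\s))$, which is precisely where the hypothesis that $H^q(L\gd, A) = 0$ for $q > 0$ must enter.

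My plan is to apply the Hochschild--Serre spectral sequence to the tower $L \subseteq L\gd \subseteq L\s$. Since $L\gd/L$ is Galois with group $\Gal\gd(L)$, and $L\s/L\gd$ is separable algebraic (so $\Gal(L\s/L\gd)$ is the absolute Galois group of $L\gd$), Hochschild--Serre provides a convergent spectral sequence
\[
E_2^{p,q} = H^p\!\bigl(\Gal\gd(L),\, H^q(\Gal(L\s/L\gd), A(L\s))\bigr) \;\Longrightarrow\; H^{p+q}(\Gal(L), A(L\s)).
\]
The inner cohomology group $H^q(\Gal(L\s/L\gd), A(L\s))$ is by definition the Galois cohomology $H^q(L\gd, A)$.

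By hypothesis, $H^q(L\gd, A) = 0$ for all $q > 0$, so the spectral sequence is concentrated in the row $q = 0$, with $E_2^{p,0} = H^p(\Gal\gd(L), A(L\gd))$. Hence it degenerates at $E_2$ and yields the desired isomorphism
\[
H^n(\Gal\gd(L), A(L\gd)) \;\cong\; H^n(\Gal(L), A(L\s))
\]
for every $n \geq 0$. Combining this with Lemma~\ref{global cohomology lem} gives the full chain of identifications in the proposition.

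The main obstacle is purely bookkeeping: confirming that the inner coefficient module in the Hochschild--Serre sequence is the one whose vanishing is assumed in the definition of global domination, and that the identifications in Lemma~\ref{global cohomology lem} and Hochschild--Serre are compatible for all $n \geq 0$ (including $n = 0$, where all three groups visibly reduce to $A(L\gd)^{\Gal\gd(L)} = A(L)$). Otherwise the argument is a direct application of degeneration of a spectral sequence, with no further input needed.
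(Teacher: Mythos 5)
Your proof is correct and follows exactly the same route as the paper: the first equality is cited from Lemma~\ref{global cohomology lem}, and the second is obtained by degeneration of the Hochschild--Serre spectral sequence for the tower $L \subseteq L\gd \subseteq L\s$ using the global domination hypothesis $H^q(L\gd,A)=0$ for $q>0$.
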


\begin{proof}
The identification of the first and second groups was given in Lemma~\ref{global cohomology lem}, and it remains to prove the isomorphism between the second and third groups.  By the global domination hypothesis, $H^n(\Gal(L\gd), A(L\s)) = H^n(L\gd,A)= 0$ for all $n>0$.  Hence from
the Hochschild-Serre
spectral sequence
\[H^p(\Gal\gd(L), H^q(\Gal(L\gd), A(L\s))) \implies
H^{p + q}(\Gal(L), A(L\s))\]
for the tower of field extensions $L \subseteq L\gd
\subseteq L\s$
(viz.\ by \cite[Theorem~III.2]{HS}),
the desired isomorphism follows.
\end{proof}

The notion of globally dominated cohomology can also be described just in terms of finite extensions of fields.  First we prove a lemma.

\begin{lem} \label{coho limit}
Suppose that a field $E_0$ is a filtered direct limit of subfields $E_i$, each of which is an extension of a field $E$.  Let $A$ be a commutative group scheme over $E$, and let $\alpha \in H^n(E,A)$ for some $n \ge 0$.  If the induced element $\alpha_{E_0} \in H^n(E_0,A)$ is trivial, then there is some $i$ such that $\alpha_{E_i} \in H^n(E_i,A)$ is trivial.
\end{lem}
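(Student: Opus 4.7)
The plan is to identify $H^n(E_0, A)$ with the filtered colimit $\varinjlim_i H^n(E_i, A)$. This identity is a standard property of Galois cohomology with respect to filtered colimits of base fields, and the lemma then follows immediately: in such a colimit of abelian groups (or pointed sets, for $n \leq 1$ in the non-abelian setting), an element vanishes if and only if it already vanishes at some finite stage, so from $\alpha_{E_0} = 0$ (which is the image in the colimit of the compatible system $\{\alpha_{E_i}\}$) we conclude that $\alpha_{E_i} = 0$ for some $i$.

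For $n = 0$, the identification reduces to $A(E_0) = \varinjlim_i A(E_i)$, which holds because $A$ is of finite type over $E$. For $n \geq 1$, I would argue at the level of continuous cochains. First fix compatible separable closures by choosing $E_0\s$ and letting $E_i\s$ be the separable closure of $E_i$ inside $E_0\s$, so that $E_0\s = \bigcup_i E_i\s$, and consequently $A(E_0\s) = \varinjlim_i A(E_i\s)$ as Galois modules. Any class in $H^n(E_0, A)$ is represented by a continuous cocycle factoring through a finite Galois quotient $\Gal(L/E_0)$ with values in $A(L)$, for some finite Galois extension $L/E_0$. After enlarging $i$ so that a defining polynomial for $L/E_0$ has coefficients in $E_i$ and so that $L \cap E_0\s$ meets $E_i\s$ only in the splitting field $L_i$ of that polynomial over $E_i$, we obtain an isomorphism $\Gal(L/E_0) \cong \Gal(L_i/E_i)$. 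The cocycle then descends to a cocycle on $\Gal(L_i/E_i)$ valued in $A(L_i)$, representing a preimage class in $H^n(E_i, A)$. Applying this same descent to a coboundary witnessing $\alpha_{E_0} = 0$ proves the lemma directly, bypassing the need to formally handle the full colimit identity.

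The main obstacle is carrying out the descent of continuous cochains along the inverse system $(\Gal(E_i))$ of Galois groups, since in principle passing to $E_0$ enlarges the Galois action. However, the descent is routine: every relevant piece of data---the finite extension $L/E_0$, which is cut out by a polynomial with finitely many coefficients, together with the finitely many values taken by a cocycle or cochain that factors through a finite quotient---is defined over a finitely generated sub-$E$-algebra of $E_0$, and such an algebra must already be contained in $E_i$ for some $i$ in the filtered system.
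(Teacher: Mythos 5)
Your proof is correct and takes essentially the same approach as the paper's: both arguments represent the trivialization of $\alpha_{E_0}$ by a cocycle and splitting cochain defined over a finite Galois extension $L/E_0$, observe that all the relevant data (the finitely many generators/coefficients cutting out $L$, and the finitely many values of the cochain in $A(L)$, each defined over a finitely generated $E$-extension because $A$ is of finite type) descends to some $E_i$ by the filtered direct limit property, and conclude $\alpha_{E_i}=0$. Your writeup is somewhat more explicit about the step the paper treats tersely---namely, that for $i$ large the finite Galois group $\Gal(L_i/E_i)$ actually agrees with $\Gal(L/E_0)$ so that the cochain data genuinely descends as a cochain on the right group---but this is a matter of level of detail rather than of method.
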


\begin{proof} 
Since $\alpha_{E_0} \in H^n(E_0,A)$ is trivial, we may find some finite Galois
extension $L/E_0$ such that $\alpha_{E_0}$ may be written as a cocycle in 
$Z^n(L/E_0, A(L))$ and such
that it is the coboundary of a cochain in $C^{n-1}(L/E_0,
A(L))$.  Now the Galois extension $L/E_0$ is generated by finitely
many elements of $L$, and the splitting cochain is defined by
an additional finite collection of elements in $A(L)$, each of which is defined over some finitely generated extension of $E$ (since $A$ is of finite type over $E$).  So we may find finitely
many elements $a_1, \ldots, a_r \in E_0$ such that $\alpha_{E(a_1, \ldots,
a_r)} = 0$. But since $E_0$ is the filtered direct limit of the fields $E_i$, there is an $i$ such that $a_1, \ldots, a_r \in E_i$; and then
$\alpha_{E_i} = 0$ as desired.
\end{proof}

\begin{prop} \label{gl dom cond}
Let $A$ be a commutative group scheme over $F$ and $L/F$ a field
extension.   Then the cohomology of $A$ over $L$ is
globally dominated if and only if for every finite globally dominated field extension
$L'/L$, every $n>0$, and every $\alpha \in H^n(L', A)$, there exists a
finite globally dominated extension $E/L'$ such that $\alpha_E = 0$.
\end{prop}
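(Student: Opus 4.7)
The plan is to reduce both directions to Lemma~\ref{coho limit} after making one definitional observation about global domination in towers. The key point is that if $L'/L$ is globally dominated, then $(L')\gd = L\gd$: we have $(L')\gd = L' \cdot F\s \subseteq L\gd$ because $L' \subseteq L\gd$ and $F\s \subseteq L\gd$, while the reverse inclusion follows from $L \subseteq L'$ and $F\s \subseteq L' \cdot F\s = (L')\gd$. In particular, a finite extension $E/L'$ is globally dominated over $L'$ precisely when $E \subseteq L\gd$, and $L\gd$ is then the filtered union of the finite globally dominated extensions of such $L'$.

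For the ``only if'' direction, I would assume that the cohomology of $A$ over $L$ is globally dominated, fix a finite globally dominated extension $L'/L$, and take any class $\alpha \in H^n(L', A)$ with $n > 0$. Its restriction $\alpha_{L\gd}$ lies in $H^n(L\gd, A) = 0$, so Lemma~\ref{coho limit} applied to $L\gd$ viewed as the filtered union of finite subextensions $E/L'$ inside $L\gd$ produces such an $E$ with $\alpha_E = 0$. By the observation above, $E/L'$ is globally dominated.

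For the ``if'' direction, I would assume the splitting hypothesis and take any $\alpha \in H^n(L\gd, A)$ with $n > 0$. Since Galois cohomology commutes with filtered direct limits of fields, $\alpha$ is the restriction of some $\beta \in H^n(L', A)$ for a finite (and automatically globally dominated) subextension $L' \subseteq L\gd$ of $L$. The hypothesis then furnishes a finite globally dominated $E/L'$ with $\beta_E = 0$; because $E \subseteq L\gd$, restricting further from $E$ to $L\gd$ yields $\alpha = 0$, so $H^n(L\gd, A) = 0$ as required.

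The argument is essentially formal and presents no substantive obstacle beyond recognizing that global domination is transitive for extensions inside $L\gd$, and that the requisite limit behavior of Galois cohomology is available in both the descent direction (Lemma~\ref{coho limit}) and the lifting direction (that a class over a direct-limit field is represented at a finite stage).
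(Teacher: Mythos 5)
Your proof is correct and follows essentially the same route as the paper: the forward direction restricts to $L\gd$ and invokes Lemma~\ref{coho limit}, and the converse lifts a class in $H^n(L\gd,A)$ to a finite subextension $L'/L$ inside $L\gd$ and applies the hypothesis there. The only addition is your upfront remark that $(L')\gd = L\gd$ for a globally dominated $L'/L$, which the paper uses implicitly.
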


\begin{proof}
First suppose that the cohomology of $A$ over $L$ is
globally dominated, and let $\alpha \in H^n(L', A)$ for some finite globally dominated field extension
$L'/L$ and some $n>0$.
Then $\alpha_{L\gd}=0$ by hypothesis; and so by Lemma~\ref{coho limit} there is some finite globally dominated extension $E/L'$ such that $\alpha_E = 0$, as desired.

Conversely, suppose that the above condition on every $\alpha \in H^n(L', A)$ holds.  Let $\alpha \in H^n(L\gd,A)$.  Then $\alpha$ is in the image of some element $\til\alpha \in H^n(L',A)$ for some finite extension $L'/L$ that is contained in $L\gd$.  Now $L'$ is globally dominated, so by hypothesis there exists a finite globally dominated field extension $E/L'$ such that $\til \alpha_E=0$.  Thus $\alpha = \til\alpha_{L\gd} = (\til \alpha_E)_{L\gd} = 0$.  This shows that $H^n(L\gd,A)$ is trivial, so the cohomology is globally dominated.
\end{proof}

\subsection{Criteria for global domination} \label{gl dom cond sect}

In the case of cyclic groups, the condition for cohomology to be
globally dominated will be made more explicit here, using the Bloch-Kato conjecture to reduce to consideration of just the
first cohomology group.  That conjecture asserts the surjectivity of the norm residue homomorphism $K_\bullet^{\rm M}(F) \to H^\bullet(F, \zmod m {\bullet})$ 
of graded rings.
This surjectivity was proven for $m$ prime in \cite{Voe:MCl} and \cite{Wei}, with the general case then following via \cite{GS}, Proposition~7.5.9.  Since every element in 
the Milnor K-group
$K_n^{\rm M}(F)$ is by definition a sum of $n$-fold products of elements of $K_1^{\rm M}(F)$, the following assertion is then immediate: 

\begin{prop} \label{cup products}
Let $F$ be a field and let $m$ be a positive integer not divisible by $\cha(F)$.  Then for every $n \ge 1$, every element of $H^n(F,
\zmod m {n})$ is a sum of $n$-fold cup products of elements of $H^1(F, \zmod m {1})$.
\end{prop}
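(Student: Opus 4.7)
The statement is essentially a direct consequence of the Bloch--Kato conjecture, so the plan is to unpack what that conjecture gives and translate between Milnor $K$-theory and Galois cohomology. The main (only) substantive ingredient is the Bloch--Kato theorem itself, which the paper has already cited; everything else is formal.

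First I would recall the setup. The norm residue homomorphism is a map of graded rings
\[
h_\bullet\colon K^{\mathrm M}_\bullet(F)/m \longrightarrow H^\bullet(F,\zmod{m}{\bullet}),
\]
which in degree $1$ agrees with the Kummer isomorphism $F^\times/(F^\times)^m \xrightarrow{\sim} H^1(F,\bmu_m)$, and which sends the Steinberg symbol $\{a_1,\dots,a_n\}$ to the $n$-fold cup product $(a_1)\cup\cdots\cup(a_n)$ of their classes in $H^1(F,\bmu_m)$. By the Bloch--Kato theorem (proven for $m$ prime in \cite{Voe:MCl} and \cite{Wei}, and for general $m$ as recorded in \cite[Proposition~7.5.9]{GS}), the map $h_n$ is surjective for every $n\ge 1$.

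Next I would use the definition of Milnor $K$-theory. By construction $K^{\mathrm M}_n(F)$ is generated as an abelian group by symbols $\{a_1,\dots,a_n\}$ with $a_i\in F^\times$, i.e.\ by $n$-fold products of elements of $K^{\mathrm M}_1(F)$; this persists after reducing modulo $m$. Hence every element of $K^{\mathrm M}_n(F)/m$ is a (finite) sum of such $n$-fold products.

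Finally I would combine the two. Given $\alpha\in H^n(F,\zmod{m}{n})$, Bloch--Kato produces $\beta\in K^{\mathrm M}_n(F)/m$ with $h_n(\beta)=\alpha$. Writing $\beta=\sum_j\{a_{1,j},\dots,a_{n,j}\}$ and applying the ring homomorphism $h_\bullet$ yields
\[
\alpha \;=\; \sum_j (a_{1,j})\cup\cdots\cup(a_{n,j}),
\]
where each factor $(a_{i,j})=h_1(a_{i,j})$ lies in $H^1(F,\zmod{m}{1})$. This is the desired expression, completing the proof. There is no real obstacle beyond citing Bloch--Kato correctly; the hypothesis $\cha(F)\nmid m$ is needed only so that $\zmod{m}{1}=\bmu_m$ is an \'etale sheaf and the norm residue map is defined.
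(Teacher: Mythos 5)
Your argument is exactly the paper's: invoke the surjectivity of the norm residue homomorphism (Bloch--Kato, via \cite{Voe:MCl}, \cite{Wei}, and \cite[Proposition~7.5.9]{GS}), note that $K^{\mathrm M}_n(F)$ is generated by $n$-fold products of elements of $K^{\mathrm M}_1(F)$, and push the resulting decomposition through the ring homomorphism. This matches the proof in the paper in both substance and citations.
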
 

\subsubsection{Global domination for cyclic groups}

\begin{prop} \label{Kummer globally dominated}
Let $L/F$ be a field extension, and $m$ an integer not divisible by the
characteristic of $F$.  Then the following are equivalent:
\begin{enumerate}
\renewcommand{\theenumi}{\roman{enumi}}
\renewcommand{\labelenumi}{(\roman{enumi})}
\item \label{coho gl dom cond}
The cohomology of $\Z/m
\Z$ over $L$ is globally dominated.
\item \label{field gl dom cond}
For every finite globally dominated field extension $L'/L$ 
and every positive integer
$r$ dividing $m$, every $\Z/r
\Z$-Galois field extension of $L'$ is globally dominated.
\item \label{power Weier cond}
The multiplicative group $(L\gd)^\times$ is $m$-divisible; i.e.\ $((L\gd)^\times)^m = (L\gd)^\times$.
\end{enumerate}
\end{prop}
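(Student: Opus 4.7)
The plan is to prove the cyclic chain of implications (\ref{coho gl dom cond})$\Rightarrow$(\ref{field gl dom cond})$\Rightarrow$(\ref{power Weier cond})$\Rightarrow$(\ref{coho gl dom cond}). Throughout, I will use Proposition~\ref{gl dom cond} to re-express~(\ref{coho gl dom cond}) in terms of killing cohomology classes by finite globally dominated extensions, Kummer theory to pass between $H^1$ and $m$-th power classes, and the Bloch--Kato consequence Proposition~\ref{cup products} to reduce cohomology in arbitrary degree to $H^1$. A standing observation is that since $\cha(F)\nmid m$, the field $L\gd$ contains $F\s$ and hence $\bmu_m$; so over $L\gd$, and over any finite globally dominated $L'/L$ after enlarging to $L'(\bmu_m)$ (which remains globally dominated, being contained in $L\gd$), the Galois modules $\Z/m\Z$ and $\bmu_m$ become isomorphic.

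For (\ref{coho gl dom cond})$\Rightarrow$(\ref{field gl dom cond}): a $\Z/r\Z$-Galois extension $E/L'$ with $r\mid m$ corresponds to a surjection $\chi:\Gal(L')\twoheadrightarrow\Z/r\Z$, and its image $\til\chi$ under the map $H^1(L',\Z/r\Z)\to H^1(L',\Z/m\Z)$ is nontrivial, the map being injective because $H^1$ of a constant Galois module is $\Hom$ of the Galois group. By Proposition~\ref{gl dom cond} applied with $n=1$, some finite globally dominated $E'/L'$ kills $\til\chi$, hence also $\chi$, which gives $E\subseteq E'\subseteq L\gd$. For (\ref{field gl dom cond})$\Rightarrow$(\ref{power Weier cond}): given $a\in(L\gd)^\times$, I pick a finite globally dominated $L'\subseteq L\gd$ containing $a$ and $\bmu_m$, and choose an $m$-th root $\alpha$ of $a$. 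Then $L'(\alpha)/L'$ is cyclic of some order $r$ dividing $m$, hence globally dominated by~(\ref{field gl dom cond}), so $\alpha\in L\gd$ and $a\in((L\gd)^\times)^m$.

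For (\ref{power Weier cond})$\Rightarrow$(\ref{coho gl dom cond}), I show directly that $H^n(L\gd,\Z/m\Z)=0$ for every $n\geq 1$. Using $\bmu_m\subseteq L\gd$, I identify $\Z/m\Z\cong\Z/m\Z(n)$ as $\Gal(L\gd)$-modules, and Kummer theory gives $H^1(L\gd,\Z/m\Z(1))\cong(L\gd)^\times/((L\gd)^\times)^m$, which vanishes by~(\ref{power Weier cond}). Proposition~\ref{cup products} then expresses every class in $H^n(L\gd,\Z/m\Z)\cong H^n(L\gd,\Z/m\Z(n))$ as a sum of $n$-fold cup products of elements of $H^1(L\gd,\Z/m\Z(1))=0$, forcing $H^n(L\gd,\Z/m\Z)=0$. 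The main content of the argument is concentrated in this last step, which is where the Bloch--Kato input enters; the other two implications are essentially Kummer-theoretic translations combined with the characterization from Proposition~\ref{gl dom cond}.
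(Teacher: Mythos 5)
Your proof is correct and follows essentially the same route as the paper: (i)$\Rightarrow$(ii) via the injectivity of $H^1(\cdot,\Z/r\Z)\to H^1(\cdot,\Z/m\Z)$, (ii)$\Rightarrow$(iii) by adjoining $\bmu_m$ and an $m$-th root of $a$ via Kummer theory, and (iii)$\Rightarrow$(i) by the Kummer identification $H^1(L\gd,\Z/m\Z(1))\cong(L\gd)^\times/((L\gd)^\times)^m$ combined with Proposition~\ref{cup products}. The only cosmetic differences are that in (i)$\Rightarrow$(ii) the paper argues directly at $L\gd$ (showing $\chi_{L\gd}=0$) and gets the injectivity from the long exact sequence of $0\to\Z/r\Z\to\Z/m\Z\to\Z/(m/r)\Z\to 0$, whereas you route through the finitary characterization of Proposition~\ref{gl dom cond} and use the $\Hom$ description of $H^1$ with constant coefficients; both are fine.
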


\begin{proof}
(\ref{coho gl dom cond}) $\Rightarrow$ (\ref{field gl dom cond}): 
A $\Z/r
\Z$-Galois field extension of $L'$ corresponds to an element $
\alpha \in H^1(L',\Z/r \Z(1))$.  Let $\beta$ be 
the image of $\alpha$ in $H^1(L\gd,\Z/r \Z(1)) 
= H^1(L\gd,\Z/r\Z)$, where the equality holds
because the field $L\gd=LF\s$ contains a primitive $m$-th root of unity.  
It suffices to show that $\beta=0$.

In the long exact cohomology sequence associated to the short exact sequence of constant groups $0 \to \Z/r \Z \to \Z/m \Z \to \Z/(m/r) \Z \to 0$, the map $H^0(L\gd,\Z/m \Z) \to H^0(L\gd,\Z/(m/r) \Z)$ is surjective, so the map $H^1(L\gd,\Z/r
\Z) \to H^1(L\gd,\Z/m
\Z)$ is injective.  But the latter group is trivial, by hypothesis.  Hence $\beta=0$.

(\ref{field gl dom cond}) $\Rightarrow$ (\ref{power Weier cond}):
Given $a \in (L\gd)^\times$, we wish to show that $a \in ((L\gd)^\times)^m$.  Let $\zeta$ be a primitive $m$-th root of unity in $F\s \subseteq L\gd$, and let $L' = L(\zeta,a) \subseteq L\gd$.  Thus $L'/L$ is finite and separable.  
The field $E = L'(a^{1/m}) \subseteq L\s$ is Galois over $L'$, with Galois group cyclic of order $r$ for some $r$ dividing $m$.  
Thus the extension $E/L'$ is globally dominated, by (\ref{field gl dom cond}); i.e.\ $E \subseteq {L'}\gd = L\gd$.  
Hence $a \in ((L\gd)^\times)^m$.

(\ref{power Weier cond}) $\Rightarrow$ (\ref{coho gl dom cond}): 
By (\ref{power Weier cond}), $H^1 (L\gd,\Z/m
\Z) = H^1 (L\gd,\zmod m 1) = (L\gd)^\times/((L\gd)^\times)^m$ is trivial.

It remains to show the triviality of $H^n(L\gd,\Z/m
\Z) = H^n(L\gd,\zmod m n)$ for $n>1$.  So let $\alpha$ lie in this cohomology group. 
By Proposition~\ref{cup products},
we may
write $\alpha = \sum_{i = 1}^s \alpha_i$, with each $\alpha_i$ having the form
\[\alpha_i = \alpha_{i,1} \cup \cdots \cup \alpha_{i,n},\]
where $\alpha_{i,j} \in H^1(L\gd, \Z/m \Z)$.  But this $H^1$ is trivial.  Hence each $\alpha_i$ is trivial, and $\alpha$ is trivial, as desired.
\end{proof}

\subsubsection{Global domination for commutative group schemes}

Using the above result, 
the question of global domination for the cohomology of a finite commutative group
scheme $A$ can be reduced to the case of cyclic groups of prime order.
We restrict to the case that the characteristic of $F$ does not divide the order of $A$ (equivalently, $A_{F\s}$ is a finite constant group scheme of order not divisible by $\cha(F)$).

\begin{cor} \label{cyclic global}
Let $L/F$ be a field extension, and $S$ a 
collection of prime numbers unequal to $\cha(F)$.  Suppose that the cohomology of the finite
constant group scheme $\Z/\ell \Z$ over $L$ is globally
dominated for each $\ell \in S$. Then for every finite commutative group
scheme $A$ over $F$ of order divisible only by primes in $S$,
the cohomology of $A$ over $L$ is globally dominated.
\end{cor}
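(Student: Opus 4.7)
The plan is to verify that $H^n(L\gd, A) = 0$ for all $n > 0$, i.e.\ that the cohomology of $A$ over $L$ is globally dominated, by reducing successively from the finite commutative group scheme $A$ to its $\ell$-primary parts, and then to cyclic subquotients of order $\ell$, where the hypothesis gives the vanishing directly.

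First, I would observe that since $A$ is finite \'etale (its order being prime to $\cha(F)$) and since $L\gd \supseteq F\s$, the $\Gal(L\gd)$-module $A(L\gd\s)$ is canonically identified with the abelian group $A(F\s)$ equipped with the trivial action: any separable $F$-embedding of a connected component of $A$ into $L\gd\s$ automatically lands inside $F\s$, and so is fixed by $\Gal(L\gd)$. Thus for the purposes of computing $H^n(L\gd, A)$, one is computing the group cohomology of $\Gal(L\gd)$ with coefficients in a trivial module whose underlying abelian group is $A(F\s)$.

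Next, I would decompose the finite abelian group $A(F\s)$ into its $\ell$-primary components as $\ell$ ranges over the (finitely many) prime divisors of $|A|$, all of which lie in $S$ by hypothesis. Since cohomology commutes with finite direct sums and the action is trivial, the task reduces to proving $H^n(L\gd, M) = 0$ for each $\ell$-primary summand $M$ and each $n > 0$. For such an $M$ I would then choose a composition series of abelian groups $0 = M_0 \subset M_1 \subset \cdots \subset M_r = M$ with successive quotients $\Z/\ell\Z$, and use the associated long exact cohomology sequences in combination with induction on $i$ to reduce to the case $M = \Z/\ell\Z$. The base case $H^n(L\gd, \Z/\ell\Z) = 0$ for all $n > 0$ is precisely the hypothesis that the cohomology of $\Z/\ell\Z$ over $L$ is globally dominated.

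The step requiring the most care --- though not really a serious obstacle --- is the first: verifying that $\Gal(L\gd)$ acts trivially on $A(L\gd\s)$, so that the decomposition of $A(F\s)$ as an abstract abelian group really does produce a decomposition of $\Gal(L\gd)$-modules. Once that observation is in place, the rest is standard d\'evissage.
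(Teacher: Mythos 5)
Your proof is correct, and it takes a genuinely different route from the paper. Both open the same way: since $A$ is finite \'etale over $F$, it becomes a constant group scheme over $L\gd = LF\s$, so $A(L\s)$ is a trivial $\Gal(L\gd)$-module (a small notational point: what you write as ``$A(L\gd\s)$'' should simply read $A(L\s)$, since $L\gd \subseteq L\s$ and they share the same separable closure). From there the paper decomposes $A_{L\gd}$ as a product of cyclic groups $\Z/m\Z$ and reduces to showing that the cohomology of $\Z/m\Z$ over $L$ is globally dominated using condition~(iii) of Proposition~\ref{Kummer globally dominated}, namely that $(L\gd)^\times$ is $m$-divisible, observing that $\ell$-divisibility for each prime $\ell \mid m$ implies $m$-divisibility. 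That step implicitly re-enters the Bloch--Kato machinery, since the implication (iii)~$\Rightarrow$~(i) of that proposition proceeds via the cup-product decomposition of Proposition~\ref{cup products}. Your d\'evissage---decompose the trivial module $A(L\s)$ into $\ell$-primary summands, filter each by a composition series with successive quotients $\Z/\ell\Z$, and chase the long exact cohomology sequences---avoids Proposition~\ref{Kummer globally dominated} entirely and uses only the definition of global domination together with the hypothesis $H^n(L\gd,\Z/\ell\Z)=0$ for $n>0$. It is marginally more work to set up but cleaner in isolating exactly what the hypothesis provides; the paper's version is shorter because the divisibility characterization had already been established there for other purposes.
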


\begin{proof}
We wish to show that $H^n(L\gd,A)=0$ for $n>0$.  Since $A$ is a finite \'etale group scheme defined over $F$, it becomes split (i.e.\ a finite constant group scheme) over $F\s$ and hence over $L\gd=LF\s$.  In particular, the base change of $A$ to $L\gd$ is a product of copies of cyclic groups $\mbb
Z/m \Z$, where each prime dividing $m$ lies in $S$.  Since cohomology commutes with taking products of coefficient groups, we are reduced to the case that
$A\cong \Z/m \Z$ for $m$ as above.  The result now follows from condition~(\ref{power Weier cond}) of Proposition~\ref{Kummer globally dominated}, since a group is $m$-divisible if it is $\ell$-divisible for each prime factor~$\ell$ of~$m$.
\end{proof}

In characteristic zero, we also obtain a result in the case of group schemes that need not be finite.  First we prove a lemma.  If 
$A$ is a group scheme over a field $E$ and 
$m \ge 1$, let $A[m]$ denote the $m$-torsion subgroup of $A$, i.e.\ the kernel of the map $A \to A$ given by multiplication by $m$.  Thus there is a natural map $H^n(E,A[m]) \to H^n(E,A)$.

\begin{lem} \label{torsion coho surjects}
Let $A$ be a connected commutative group scheme over a field $E$ of characteristic zero, and let $n \ge 1$.  Then every element of $H^n(E,A)$ is in the image of $H^n(E,A[m]) \to H^n(E,A)$ for some $m \ge 1$.
\end{lem}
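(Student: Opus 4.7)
The plan is to deduce the result from a Kummer-type exact sequence for $A$, together with the standard fact that higher Galois cohomology is torsion. Since $\cha(E) = 0$ and $A$ is of finite type (as the paper assumes throughout for all group schemes), Cartier's theorem ensures that $A$ is smooth over $E$. The differential of the multiplication-by-$m$ map $[m] : A \to A$ at the identity is multiplication by $m$ on $\mathrm{Lie}(A)$, which is an isomorphism in characteristic zero; by translation $[m]$ is étale at every point. Its image is therefore an open subgroup of $A$, whose complement is a disjoint union of open cosets of equal dimension; connectedness of $A$ then forces $[m]$ to be surjective as a map of schemes, with kernel the finite étale group scheme $A[m]$.

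Consequently, on $E\s$-points one obtains a short exact sequence of $\Gal(E)$-modules
\[0 \to A[m](E\s) \to A(E\s) \xrightarrow{\ [m]\ } A(E\s) \to 0,\]
and hence a long exact sequence in Galois cohomology
\[\cdots \to H^n(E, A[m]) \to H^n(E, A) \xrightarrow{\ m\ } H^n(E, A) \to \cdots\]
To finish, I would invoke the fact that for $n \ge 1$ the group $H^n(\Gal(E), A(E\s))$ is torsion, since it equals the filtered colimit over open normal subgroups $U \le \Gal(E)$ of the finite-group cohomology groups $H^n(\Gal(E)/U,\, A(E\s)^U)$, each of which is annihilated by $[\Gal(E):U]$. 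Given $\alpha \in H^n(E,A)$, pick $m \ge 1$ with $m\alpha = 0$; then exactness of the above sequence immediately lifts $\alpha$ to an element of $H^n(E, A[m])$.

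The only step requiring genuine verification is the surjectivity of $[m] : A \to A$, and this is exactly where both hypotheses on $A$ are used: connectedness rules out the possibility that the image is a proper open subgroup, and characteristic zero (via Cartier smoothness) is what makes $[m]$ étale. The remainder of the argument is purely formal manipulation of the induced long exact sequence combined with the torsion property of $H^{\ge 1}$ of a profinite group.
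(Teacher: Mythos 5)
Your proof is correct and follows essentially the same route as the paper: the Kummer sequence $0 \to A[m] \to A \xrightarrow{[m]} A \to 0$ of étale sheaves (where the paper asserts exactness from $\cha(E)=0$ and connectedness, and you fill in the Lie-algebra/openness argument), combined with the fact that $H^n(E,A)$ is torsion for $n\geq 1$ (which the paper cites from Serre and you rederive via the colimit over finite quotients). The extra detail you supply is accurate, but the logical skeleton is identical to the paper's.
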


\begin{proof}
The group $H^n(E,A)$ is torsion by \cite[I.2.2 Cor.~3]{Se:GC}, and so 
for every $\alpha \in H^n(E,A)$
there exists $m \ge 1$ such that $m\alpha=0$ (writing $A$ additively).  
Since $\cha(E)=0$ and $A$ is connected, there is a 
short exact sequence $0 \to A[m] \to A \to A \to 0$ of \'etale sheaves.  This yields an exact sequence 
$H^n(E,A[m]) \to H^n(E,A) \to H^n(E,A)$ of groups, where the latter map is multiplication by $m$.  
Thus $\alpha$ is sent to zero under this map, and hence it lies in the image of $H^n(E,A[m])$.  
\end{proof}

\begin{prop} \label{char 0 global}
Assume that $\cha(F)=0$, and let $L/F$ be a field extension.  Suppose that the cohomology of the finite
constant group scheme $\Z/\ell \Z$ over $L$ is globally
dominated for every prime $\ell$. Then for every smooth commutative group scheme $A$ over $F$, the cohomology of $A$ over $L$ is globally dominated.
\end{prop}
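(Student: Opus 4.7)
The plan is to reduce the general case to two cases already handled in the excerpt: finite étale commutative group schemes (via Corollary~\ref{cyclic global}) and connected commutative group schemes (via Lemma~\ref{torsion coho surjects}).

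For a smooth commutative group scheme $A$ over $F$ (of finite type, per the paper's convention), I would start with the connected-étale short exact sequence
\[0 \to A^0 \to A \to \pi_0(A) \to 0\]
of étale sheaves on $\Spec(F)$, where $A^0$ is the identity component and $\pi_0(A)$ is a finite étale group scheme, both commutative because $A$ is. Pulling back to $L\gd$ and taking the long exact sequence in Galois cohomology, it suffices to show that $H^n(L\gd, A^0) = 0$ and $H^n(L\gd, \pi_0(A)) = 0$ for all $n > 0$, which would immediately give $H^n(L\gd, A) = 0$ and hence global domination.

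The vanishing $H^n(L\gd, \pi_0(A)) = 0$ is immediate from Corollary~\ref{cyclic global}, taking $S$ to be the set of all primes (which is permitted since $\cha(F) = 0$). For the connected piece, I would fix $\alpha \in H^n(L\gd, A^0)$ with $n > 0$ and apply Lemma~\ref{torsion coho surjects} over $E = L\gd$ (which inherits characteristic zero from $F$); this exhibits $\alpha$ as the image of some $\til\alpha \in H^n(L\gd, A^0[m])$ for an integer $m \ge 1$. Since we are in characteristic zero, $A^0[m]$ is a finite étale commutative group scheme over $F$, so Corollary~\ref{cyclic global} applies once more to give $H^n(L\gd, A^0[m]) = 0$, forcing $\alpha = 0$.

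The main thing to double-check carefully is that the connected-étale sequence is available in this generality and yields an exact sequence of étale sheaves — this is standard for finite-type smooth commutative group schemes in characteristic zero, with $A^0$ open and closed in $A$. A related subtle point is that Lemma~\ref{torsion coho surjects} requires a connected coefficient group, which is exactly why peeling off $\pi_0(A)$ first (rather than applying the torsion-image argument directly to $A$) is necessary.
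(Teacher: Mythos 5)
Your proof is correct and follows essentially the same route as the paper: the paper also decomposes via $0 \to A^0 \to A \to A/A^0 \to 0$, disposes of the finite quotient with Corollary~\ref{cyclic global}, lifts a class in $H^n(L\gd,A^0)$ to $H^n(L\gd,A^0[m])$ via Lemma~\ref{torsion coho surjects}, and applies Corollary~\ref{cyclic global} once more. The only cosmetic difference is that the paper phrases the argument as chasing a single $\alpha \in H^n(L\gd,A)$ through the long exact sequence rather than announcing vanishing of the two flanking groups up front.
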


\begin{proof}
Let $\alpha \in H^n(L\gd,A)$, for some $n > 0$.  We wish to show that $\alpha=0$.  

The short exact sequence $0 \to A^0 \to A \to A/A^0 \to 0$ of \'etale sheaves 
yields an exact sequence 
$H^n(L\gd,A^0) \to H^n(L\gd,A) \to H^n(L\gd,A/A^0)$ of groups.  
But 
Corollary~\ref{cyclic global} asserts that the cohomology of the finite commutative group scheme $A/A^0$ over $L$ is globally dominated, since $\cha(F)=0$; 
i.e.\ $H^n(L\gd,A/A^0)=0$.  So $\alpha \in H^n(L\gd,A)$ is the image
of some element $\alpha^0 \in H^n(L\gd,A^0)$.
By Lemma~\ref{torsion coho surjects}, $\alpha^0$ 
lies in the image of $H^n(L\gd,A^0[m])$ for some $m \ge 1$.  
Since $A^0[m]$ is a finite commutative group scheme over $L$, a second application
of Corollary~\ref{cyclic global} yields that $H^n(L\gd,A^0[m])=0$.
So $\alpha^0 = 0$ and hence $\alpha=0$.
\end{proof}

\subsection{Mayer-Vietoris and local-global principles} \label{abstract MV}

We now use the previous results to obtain our long exact sequence, which in particular gives the abstract form of our Mayer-Vietoris sequence, and we then prove the abstract form of a local-global principle for Galois cohomology.

\begin{thm} \label{long ex sequence} 
Given an oriented graph $\Gamma$, fix a $\Gamma/F$-field $F_\bullet$ and consider a separably factorizable smooth commutative
group scheme $A$ over $F$.
Suppose that for every $\xi \in \mc V \sqcup \mc E$, 
the cohomology of $A$ over $F_\xi$ is globally dominated.
Then we have a long exact sequence of Galois
cohomology:
\[
\xymatrix{
0 \ar[r] & H^0(F,A) \ar[r] & \prod_{v \in \mc V} H^0(F_v,A) \ar[r] & \prod_{e
\in \mc E} H^0(F_e,A) 
\ar `d[l] `[lld] [lld] \\
& H^1(F, A) \ar[r] & \prod_{v \in \mc V} H^1(F_v, A) \ar[r] & \prod_{e
\in \mc E} H^1(F_e, A)\ar[r] & \cdots
}
\]
\end{thm}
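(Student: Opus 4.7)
The plan is to assemble the long exact sequence from two ingredients already developed in the paper: the short exact sequence of $\Gal(F)$-modules produced by separable factorizability, and the cohomological identification afforded by global domination.

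First I would apply Proposition~\ref{sep fact ex seq} to the smooth commutative group scheme $A$: since $A$ is separably factorizable over $F_\bullet$ and commutative, the proposition yields a short exact sequence of $\Gal(F)$-modules
\[
0 \to A(F\s) \to \prod_{v \in \mc V} A(F_v \otimes_F F\s) \xrightarrow{\pi_l \cdot \pi_r^{-1}} \prod_{e \in \mc E} A(F_e \otimes_F F\s) \to 0.
\]
Because $\mc V$ and $\mc E$ are finite (the graph is finite by convention), the middle and right terms are honest Galois modules, and applying the Galois cohomology functor $H^\bullet(\Gal(F),-)$ to this short exact sequence produces a natural long exact sequence
\[
0 \to H^0(\Gal(F),A(F\s)) \to \prod_{v} H^0(\Gal(F),A(F_v \otimes_F F\s)) \to \prod_{e} H^0(\Gal(F),A(F_e \otimes_F F\s)) \to H^1(\Gal(F),A(F\s)) \to \cdots
\]
where I have also commuted cohomology past the finite products in the middle and right terms.

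Next I would substitute each cohomology group with the corresponding Galois cohomology of $F_\xi$. The left-hand term $H^n(\Gal(F),A(F\s))$ is by definition $H^n(F,A)$. For each $\xi \in \mc V \sqcup \mc E$, the hypothesis that the cohomology of $A$ over $F_\xi$ is globally dominated lets us invoke Proposition~\ref{refinement}, which gives the canonical identification
\[
H^n(\Gal(F), A(F_\xi \otimes_F F\s)) \cong H^n(\Gal(F_\xi), A(F_\xi\s)) = H^n(F_\xi, A).
\]
These identifications are natural in the maps among the $F_\xi$, so they turn the long exact sequence above into precisely the one claimed in the statement.

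The only subtle point — which I expect to be the main thing to track rather than a genuine obstacle — is verifying that the identifications from Proposition~\ref{refinement} are natural with respect to the inclusions $\iota_v^e \colon F_v \hookrightarrow F_e$, so that the connecting maps and the horizontal arrows in the long exact sequence translate correctly into the displayed sequence. This is taken care of by the fact that the isomorphisms of Lemma~\ref{global cohomology lem} and the collapsing of the Hochschild–Serre spectral sequence in Proposition~\ref{refinement} are functorial in the input field extension, so the resulting diagram commutes. With that naturality confirmed, concatenating the identifications term by term produces the desired long exact sequence.
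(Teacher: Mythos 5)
Your proposal is correct and follows essentially the same two-step argument as the paper: use Proposition~\ref{sep fact ex seq} to produce the short exact sequence of $\Gal(F)$-modules, take its long exact cohomology sequence, and then rewrite each term via the global-domination identification of Proposition~\ref{refinement}. The extra remarks about naturality of the identifications are a reasonable amplification of what the paper leaves implicit, but the approach is the same.
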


\begin{proof}
By hypothesis, the cohomology of $A$ over $F_\xi$ is globally dominated. By
Proposition~\ref{refinement}, with $L = F_\xi$, we may identify
\(H^n(\Gal(F), A(F_\xi \otimes_F F\s))
\cong
H^n(F_\xi, A).\)

Since $A$ is separably factorizable, 
by Proposition~\ref{sep fact ex seq}
we have a short exact sequence of
$\Gal(F)$-modules
\[
\xymatrix 
@C=1cm
{
0 \ar[r] & A(F\s) \ar[r] & \Av \ar[r]^{\pi_l \cdot \pi_r^{-1}} &  \Ae \ar[r] & 0.
}
\]
This induces a long exact sequence in Galois cohomology over $F$.  Applying the 
above identification to the terms of this sequence, we obtain the exact sequence asserted in the theorem. 
\end{proof}

\begin{cor} \label{les conditions}
Given a separably factorizable smooth commutative
group scheme $A$ over $F$ and a $\Gamma/F$-field $F_\bullet$,
the long exact sequence in Theorem~\ref{long ex sequence} holds in each of the following cases:
\begin{enumerate}
\renewcommand{\theenumi}{\roman{enumi}}
\renewcommand{\labelenumi}{(\roman{enumi})}
\item \label{mv finite}
 $A$ is finite; and for every $\xi \in \mc V \sqcup \mc E$, and every prime $\ell$
dividing the order of $A$, the cohomology of $\Z/\ell
\Z$ over $F_\xi$ is globally dominated.
\item \label{mv char 0}
$F$ is a field of characteristic zero; and for every $\xi \in \mc V \sqcup \mc E$, and every prime number $\ell$, the cohomology of $\Z/\ell
\Z$ over $F_\xi$ is globally dominated.
\end{enumerate}
\end{cor}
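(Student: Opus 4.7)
My plan is to derive both cases as direct corollaries of Theorem~\ref{long ex sequence} by verifying its remaining hypothesis, namely that the cohomology of $A$ over each $F_\xi$ is globally dominated. Since separable factorizability of $A$ is already assumed in the statement, this is the only thing to check, and the work has essentially been done in Section~\ref{gl dom cond sect}.

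For case~(\ref{mv finite}), I would let $S$ denote the set of primes dividing the order of $A$ and apply Corollary~\ref{cyclic global} with $L = F_\xi$. A small but important preliminary point: Theorem~\ref{long ex sequence} assumes $A$ is smooth, and a smooth finite commutative group scheme over $F$ has order coprime to $\cha(F)$, so $S$ is indeed a collection of primes unequal to $\cha(F)$ as required by Corollary~\ref{cyclic global}. The hypothesis that the cohomology of $\Z/\ell \Z$ over $F_\xi$ is globally dominated for each $\ell \in S$ is precisely what is given in~(\ref{mv finite}), so Corollary~\ref{cyclic global} yields that the cohomology of $A$ over $F_\xi$ is globally dominated.

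For case~(\ref{mv char 0}), the hypothesis $\cha(F)=0$ and the assumption that the cohomology of $\Z/\ell \Z$ over $F_\xi$ is globally dominated for every prime $\ell$ are exactly the hypotheses of Proposition~\ref{char 0 global} applied to $L = F_\xi$. That proposition directly gives that the cohomology of $A$ over $F_\xi$ is globally dominated for any smooth commutative group scheme $A$ over $F$.

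In both cases the hypothesis of Theorem~\ref{long ex sequence} is thereby verified for every $\xi \in \mc V \sqcup \mc E$, and the asserted long exact sequence follows immediately. There is no real obstacle here; the only thing one could conceivably overlook is the smoothness-versus-characteristic compatibility in case~(\ref{mv finite}), which is resolved by the standard fact that a smooth finite commutative group scheme has order prime to the characteristic of the base field.
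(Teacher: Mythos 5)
Your proof is correct and follows essentially the same route as the paper: verify the global-domination hypothesis of Theorem~\ref{long ex sequence} over each $F_\xi$ by invoking Corollary~\ref{cyclic global} in case~(i) and Proposition~\ref{char 0 global} in case~(ii). Your explicit remark that smoothness of a finite commutative group scheme forces its order to be prime to $\cha(F)$, so that the set $S$ meets the hypothesis of Corollary~\ref{cyclic global}, is a worthwhile detail the paper leaves implicit.
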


\begin{proof}
By Theorem~\ref{long ex sequence} it suffices to show that 
the cohomology of $A$ over $F_\xi$ is globally dominated.  In these two cases, this condition is satisfied by
Corollary~\ref{cyclic global} and Proposition~\ref{char 0 global} respectively.
\end{proof}

An important case is that of a graph $\Gamma$ that is {\em bipartite}, i.e.\ for which there is a partition $\mc V = \mc V_0 \sqcup \mc V_1$ such that for every edge $e \in \mc E$, one vertex is in $\mc V_0$ and the other is in $\mc V_1$.  Given a bipartite graph $\Gamma$ together with such a partition, we will choose the orientation on $\Gamma$ given by taking $l(e)$ and $r(e)$ to be the vertices of $e \in \mc E$ lying in $\mc V_0$ and $\mc V_1$ respectively.

\begin{cor}[Abstract Mayer-Vietoris] \label{mv seq bipartite}
In the situation of Theorem~\ref{long ex sequence}, assume that the graph $\Gamma$ is bipartite, with respect to a partition 
$\mc V = \mc V_0 \sqcup \mc V_1$ of the set of vertices.  Then 
the long exact cohomology sequence in Theorem~\ref{long ex sequence} becomes the Mayer-Vietoris sequence
\[
\xymatrix{
0 \ar[r] & A(F) \ar[r]^-{\Delta} & \prod_{v \in \mc V_0} A(F_v) \times \prod_{v \in \mc V_1} A(F_v)
\ar[r]^-{-} & \prod_{e
\in \mc E} A(F_e) 
\ar `d[l] `[lld] [lld] \\
& H^1(F, A) \ar[r]^-{\Delta} & \prod_{v \in \mc V_0} H^1(F_v, A) \times \prod_{v \in \mc V_1} H^1(F_v, A) 
\ar[r]^-{-} & \prod_{e
\in \mc E} H^1(F_e, A)   
\ar `d[l] `[lld] [lld] \\ & H^2(F, A) 
\ar[r]^-{\Delta} & 
{\ \ \cdots \hskip 2.2in}
}
\]
where the maps $\Delta$ and $-$  are induced by the diagonal inclusion and by subtraction, respectively.
\end{cor}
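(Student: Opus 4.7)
The plan is to specialize Theorem \ref{long ex sequence} to the bipartite setting; essentially no new mathematical content is required beyond bookkeeping with the orientation. First, I would invoke Theorem \ref{long ex sequence} verbatim, since its hypotheses (separable factorizability of $A$ and global domination of the cohomology of $A$ over each $F_\xi$) are inherited word for word. This yields the long exact sequence
\[
\cdots \to H^n(F,A) \to \prod_{v \in \mc V} H^n(F_v,A) \xrightarrow{\pi_l - \pi_r} \prod_{e \in \mc E} H^n(F_e,A) \to H^{n+1}(F,A) \to \cdots,
\]
where the middle map is written additively (as $\pi_l - \pi_r$ rather than $\pi_l \cdot \pi_r^{-1}$) since $A$ is commutative and the groups $H^n$ are abelian; recall here that $H^0(F,A) = A(F)$ and similarly at each $F_v$ and $F_e$.

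Next, using the bipartition $\mc V = \mc V_0 \sqcup \mc V_1$, I would decompose
\[
\prod_{v \in \mc V} H^n(F_v,A) \;=\; \prod_{v \in \mc V_0} H^n(F_v,A) \;\times\; \prod_{v \in \mc V_1} H^n(F_v,A).
\]
The map $H^n(F,A) \to \prod_{v \in \mc V} H^n(F_v,A)$ of Theorem \ref{long ex sequence} is, by construction, the product of the restriction maps induced by the $F$-algebra inclusions $F \hookrightarrow F_v$; under the above decomposition it sends $\alpha$ to the pair $\bigl((\alpha_{F_v})_{v \in \mc V_0},\,(\alpha_{F_v})_{v \in \mc V_1}\bigr)$, which is precisely the diagonal map $\Delta$ appearing in the statement.

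Finally, I would unpack $\pi_l - \pi_r$. By the chosen orientation, $l(e) \in \mc V_0$ and $r(e) \in \mc V_1$ for every $e \in \mc E$. Consequently $\pi_l$ depends only on the $\mc V_0$-coordinates of an element of $\prod_v H^n(F_v,A)$, and $\pi_r$ depends only on the $\mc V_1$-coordinates. Writing an element of the middle term as a pair $\bigl((a_v)_{v \in \mc V_0},(b_v)_{v \in \mc V_1}\bigr)$, the map $\pi_l - \pi_r$ is then $(a_{l(e)} - b_{r(e)})_{e \in \mc E}$, which is the subtraction map $-$ in the statement. Substituting these identifications into the long exact sequence of Theorem \ref{long ex sequence} produces exactly the claimed Mayer-Vietoris sequence. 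The only point requiring care — and the only possible obstacle — is consistency of the orientation and sign conventions, which is routine.
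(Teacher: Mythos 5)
Your proposal is correct and is essentially the argument the authors have in mind; the paper gives no separate proof of the corollary precisely because it is a direct reinterpretation of Theorem~\ref{long ex sequence} under the bipartite partition and the fixed orientation with $l(e) \in \mc V_0$, $r(e) \in \mc V_1$. Your identifications of the diagonal map (via the inclusion $A(F\s) \hookrightarrow \Av$ together with Proposition~\ref{refinement}) and of $\pi_l - \pi_r$ with the subtraction map are exactly what is intended.
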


\begin{thm}[Abstract Local-Global Principle] \label{local-global bipartite}
Fix a $\Gamma/F$-field $F_\bullet$, and fix a positive integer $m$ not
divisible by the characteristic of $F$. Suppose that the following
conditions hold:
\renewcommand{\theenumi}{\roman{enumi}}
\renewcommand{\labelenumi}{(\roman{enumi})}
\begin{enumerate}
\item \label{cond bip}
$\Gamma$ is bipartite, with respect to a partition 
$\mc V = \mc V_0 \sqcup \mc V_1$ of the set of vertices;
\item \label{cond comp}
for every $\xi \in \mc V \sqcup \mc E$, 
the cohomology of $\Z/m
\Z$ over $F_\xi$ is globally dominated;
\item \label{cond local factor}
given $v \in \mc V_0$, and elements $a_e \in F_e^\times$ for all $e \in \mc E$ that are incident to $v$, there exists $a \in F_v^\times$ such that $a_e/a \in (F_e^\times)^m$ for all $e$ (where we identify $F_v$ with its image $i_v^e(F_v) \subseteq F_e$).
\end{enumerate}
Then for all $n > 0$, the natural local-global maps
\[\sigma_n:H^{n+1}(F, \zmod m n) \to \prod_{v \in \mc V} H^{n+1}(F_v, \mbb
Z/m \Z (n))\]
are injective.
\end{thm}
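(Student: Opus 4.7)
My strategy is to apply the abstract Mayer--Vietoris sequence (Corollary~\ref{mv seq bipartite}) to the Galois module $A = \zmod m n$. Granted that sequence, the relevant exact piece
\[
\prod_{v\in\mc V} H^n(F_v,A) \xrightarrow{\pi_l - \pi_r} \prod_{e\in\mc E} H^n(F_e,A) \longrightarrow H^{n+1}(F,A) \xrightarrow{\sigma_n} \prod_{v\in\mc V} H^{n+1}(F_v,A)
\]
identifies $\ker\sigma_n$ with the cokernel of $\pi_l - \pi_r$, so the proof reduces to showing surjectivity of $\prod_v H^n(F_v,A) \to \prod_e H^n(F_e,A)$. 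This surjectivity I will extract from Bloch--Kato (Proposition~\ref{cup products}), which lets me decompose any edge class into $n$-fold cup products of $H^1$-classes, combined with hypothesis~(\ref{cond local factor}) applied one symbol slot at a time; the bipartite structure~(\ref{cond bip}) lets me zero out the $\mc V_1$-vertex data.

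\textbf{Main steps.} To invoke Corollary~\ref{mv seq bipartite} I first check its hypotheses for $A=\zmod m n$. Global domination of the cohomology of $A$ over each $F_\xi$ follows from~(\ref{cond comp}) and Corollary~\ref{cyclic global}: Proposition~\ref{Kummer globally dominated} characterises global domination of $\Z/m\Z$-cohomology by $m$-divisibility of $(F_\xi\gd)^\times$, which forces $\ell$-divisibility for every prime $\ell\mid m$, so Corollary~\ref{cyclic global} applies to the $m$-torsion group scheme $A$. Granted Mayer--Vietoris, I take any $(\alpha_e)_e$ and use Proposition~\ref{cup products} to write $\alpha_e = \sum_{s\in S_e}(a_{e,s,1})\cup\cdots\cup(a_{e,s,n})$ with $a_{e,s,j}\in F_e^\times$, under Kummer's identification $H^1(F_e,\mu_m) = F_e^\times/(F_e^\times)^m$. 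For each $v\in\mc V_0$ and each triple $(e_0,s_0,j)$ with $e_0\ni v$, $s_0\in S_{e_0}$, $1\le j\le n$, I apply~(\ref{cond local factor}) to the tuple $(a'_e)_{e\ni v}$ defined by $a'_{e_0}=a_{e_0,s_0,j}$ and $a'_e=1$ for $e\neq e_0$, obtaining $b_{v,e_0,s_0,j}\in F_v^\times$ whose class in $F_e^\times/(F_e^\times)^m$ equals $a_{e_0,s_0,j}$ for $e=e_0$ and is trivial for every other $e\ni v$. I then set
\[
\beta_v \;:=\; \sum_{e\ni v}\sum_{s\in S_e}(b_{v,e,s,1})\cup\cdots\cup(b_{v,e,s,n})\in H^n(F_v,\mu_m^{\otimes n})
\]
for $v\in\mc V_0$ and $\beta_v:=0$ for $v\in\mc V_1$. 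For each edge $e$ with $l(e)=v_0\in\mc V_0$ and $r(e)=v_1\in\mc V_1$, the summands of $\beta_{v_0}|_{F_e}$ indexed by $e_0\neq e$ vanish (the leading factor is already trivial in $H^1(F_e,\mu_m)$) while those with $e_0=e$ collapse to $\alpha_e$; since $\beta_{v_1}|_{F_e}=0$, this gives $(\pi_l-\pi_r)(\beta) = (\alpha_e)_e$, as required.

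\textbf{Main obstacle.} The chief technical hurdle is verifying separable factorizability of $A=\mu_m^{\otimes n}$ over $F_\bullet$, the remaining hypothesis of Corollary~\ref{mv seq bipartite}. Hypothesis~(\ref{cond local factor}) delivers factorization only modulo $m$-th powers and only at $\mc V_0$-vertices, whereas separable factorizability requires a genuine lift to an equality in $\mu_m^{\otimes n}(F_e\otimes_F F\s)$ for every edge. My intended bridge is the Kummer sequence $1\to\mu_m\to\Gm\to\Gm\to 1$ combined with the $m$-divisibility of $(F_\xi\gd)^\times$ supplied by~(\ref{cond comp}) through Proposition~\ref{Kummer globally dominated}: this $m$-divisibility makes the $m$-th power map surjective on the relevant units of $F_\xi\otimes_F F\s$, which should promote the mod-$m$ factorization from~(\ref{cond local factor}) to a $\mu_m$-level factorization, and then to the required $\mu_m^{\otimes n}$ statement by twisting. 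Once this technical point is settled, the Bloch--Kato-and-bipartite construction above yields the injectivity of $\sigma_n$.
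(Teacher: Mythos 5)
Your reduction of $\ker\sigma_n$ to the cokernel of $\rho^{n,n}$ via the long exact sequence, and your use of Bloch--Kato (Proposition~\ref{cup products}) together with hypothesis~(iii) to establish surjectivity of that cokernel map, match the paper's argument. Where the paper observes that the one-sided restriction $\rho^{\bullet,\bullet}_0$ (restriction via $l(e)\in\mc V_0$) is a homomorphism of graded rings and simply propagates surjectivity from degree~$1$ up to degree~$n$, you give a more hands-on, edge-by-edge construction of a preimage: applying~(iii) to a delta-type tuple (the chosen symbol on one edge, $1$ on the others) to produce the classes $b_{v,e,s,j}$, and noting that each $n$-fold cup product collapses onto the intended summand on $e_0$ and vanishes on every other edge at $v$ because its first factor is already trivial in $F_e^\times/(F_e^\times)^m$. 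Both routes are valid and essentially the same; the paper's phrasing is just slicker.

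Your ``main obstacle'' is the interesting point, and you are right to flag it. Theorem~\ref{long ex sequence} (and hence Corollary~\ref{mv seq bipartite}) requires separable factorizability of $A$ as a hypothesis, and hypotheses~(i)--(iii) of Theorem~\ref{local-global bipartite} do not obviously imply it: (iii) controls $F_e^\times/(F_e^\times)^m$ and only at $\mc V_0$-vertices, while separable factorizability concerns surjectivity of $\prod_v A(F_v\otimes_F F\s)\to\prod_e A(F_e\otimes_F F\s)$, a statement about the full (non-invariant) Galois modules. In fact the paper's own proof of Theorem~\ref{local-global bipartite} invokes Theorem~\ref{long ex sequence} citing only hypothesis~(ii), without addressing this point; the intended reading is that the theorem carries an additional implicit hypothesis that patching holds for finite-dimensional vector spaces over $F_\bullet$ (equivalently, by Corollary~\ref{patching yields sep fact}, that every linear algebraic group over $F$ is separably factorizable over $F_\bullet$). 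That hypothesis is available in all of the paper's applications, via Theorem~\ref{patching for our field}. Your proposed Kummer-sequence ``bridge'' is not worked out and I do not believe it closes this gap from (i)--(iii) alone: $m$-divisibility of $(F_\xi\gd)^\times$ governs the $H^1$'s after taking Galois invariants via global domination, but it does not deliver surjectivity of the restriction maps between the groups $\zmod m n (F_\xi\otimes_F F\s)$ themselves, which decompose as induced modules that can identify under restriction. The right fix is to add separable factorizability (or vector-space patching) as an explicit hypothesis rather than try to derive it from (i)--(iii).
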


\begin{proof}
Given hypothesis (\ref{cond bip}), as above we choose the orientation on $\Gamma$ such that $l(e) \in \mc V_0$ 
and $r(e) \in \mc V_1$ for all $e \in \mc E$.
Consider the homomorphisms:
\[\rho^{i,j}, \rho^{i,j}_0: \prod_{v \in \mc V} H^i(F_v, \zmod m
j) \to \prod_{e \in \mc E} H^i(F_e, \zmod m {j}),\]
where for $\alpha \in \prod_{v \in \mc V} H^i(F_v, \zmod m j)$
the $e$-th entries of $\rho^{i,j}(\alpha)$, $\rho_0^{i,j}(\alpha)$ are given by
\[\rho^{i,j}(\alpha)_e = (\alpha_{l(e))})_{F_e} - (\alpha_{r(e)})_{F_e},
 \ \ 
\rho^{i,j}_0(\alpha)_e = (\alpha_{l(e)})_{F_e}.
\]
Using hypothesis (\ref{cond comp}), Theorem~\ref{long ex sequence} allows us
to identify the kernel of
$\sigma_n$
with the cokernel of
\[\rho^{n, n} : \prod_{v \in \mc V} H^{n}(F_v, \zmod m n)
\to \prod_{e \in \mc E} H^{n}(F_e, \zmod m n).\] 
Thus it suffices to show that $\rho^{n, n}$ is surjective for $n \ge 1$.  
This in turn will follow from showing that $\rho^{n, n}_0$ is surjective, since the image of $\rho^{n, n}_0$ is contained in that of $\rho^{n, n}$
(using that $\Gamma$ is bipartite, and setting $\alpha_v=0$ for all $v \in \mc V_1$).

Writing $H^n_{\mc V} = \prod_{v \in \mc V} H^n(F_v, \zmod m {n})$ and $H^n_{\mc E} = \prod_{e \in \mc E}
H^n(F_e, \zmod m {n})$, we note that $\rho^{\bullet, \bullet}_0 : H^\bullet_{\mc V} \to H^\bullet_{\mc E}$ is a homomorphism of graded rings.
By hypothesis (\ref{cond local factor}), $\rho^{1, 1}_0$ is surjective,
since $H^1(E,\zmod m
1) = E^\times/(E^\times)^m$ for any field $E$ 
of characteristic not dividing $m$.
By Proposition~\ref{cup products},
every
element in $H^n_{\mc E}$ is a sum of $n$-fold products of elements in
$H^1_{\mc E}$, for $n \ge 1$. But since the map $\rho^{\bullet, \bullet}_0$ is a ring homomorphism, and $\rho^{1,1}_0$ is surjective, 
it follows that
$\rho^{n,n}_0$ is surjective as well for all $n \ge 1$.
\end{proof}

\section{Curves over complete discrete valuation rings} \label{curve section}

We now apply the previous general results to the more specific situation that we study in this paper: function fields $F$ over a complete discretely valued field $K$.  
In Section~\ref{curve patches} we will obtain a Mayer-Vietoris sequence and a local-global principle in the context of finitely many overfields $F_\xi$ of $F$ (``patches'').  
This can be compared with Theorem~3.5 of~\cite{HHK:H1}.
We will afterwards use that to obtain local-global principles with respect to the points on the closed fiber of a model (in Section~\ref{curve points}), and with respect to the discrete valuations on $F$ or on a regular model of $F$ (in Section~\ref{curve dvr}).   
These will later be used in Section~\ref{applications} to obtain applications to other algebraic structures.

\subsubsection{Notation}

We begin by fixing the standing notation for this section, which follows that of~\cite{HH}, \cite{HHK}, and \cite{HHK:H1}. 
Let $T$ be a complete discrete valuation ring with fraction field $K$ and
residue field $k$ and uniformizer $t$, and let $\mX$ be a projective,
integral and normal $T$-curve. Let $F$ be the function field of $\mX$. We
let $\cX$ be the closed fiber of $\mX$, and we choose a non-empty collection
of closed points $\P \subset \cX$, containing all the 
points at which distinct irreducible components of $\cX$ meet.
Thus the open complement $\cX
\smallsetminus \P$ is a disjoint union of finitely many irreducible affine $k$-curves $U$. Let $\UU$ denote the collection of
these open sets $U$.  

For a point $P \in \P$, we let $R_P$ be the local ring $\mc O_{\mX,
P}$ at $P$, and we let $\wh R_P$ be the completion at its maximal
ideal. Let $F_P$ be the fraction field of $\wh R_P$.  For a component
$U \in \UU$, we let $R_U$ be the subring of $F$ consisting of rational functions on $\mX$ that are regular at the points of $U$, i.e.\
\[R_U = \{f \in F | f \in \mc O_{\mX, Q} \text{ for all } Q \in U\}.\]
We also let $\wh R_U$ be the $t$-adic completion of $R_U$, and we let $F_U$ be the
field of fractions of $\wh R_U$.  Here $\wh R_P$ and $\wh R_U$ are Noetherian integrally closed domains (because $\wh X$ is normal), and in particular Krull domains.

For a point $P \in \P$ and a component $U \in \UU$, we say that $P$ and $U$
are \textit{incident} if $P$ is contained in the closure of $U$. Given $P \in \P$
and $U \in \UU$ that are incident, the 
prime ideal sheaf $\mathcal I$
defining the reduced closure $\ov U^{\rm red}$ of $U$ in $\mX$ induces a 
(not necessarily
prime) ideal $\mathcal I_P$ in the complete local ring $\wh R_P$. We call
the height one prime ideals of $\wh R_P$ containing $\mathcal I_P$ the
\textit{branches} on $U$ at $P$. We let $\B$ denote the collection of
branches on all points in $\P$ and all components in $\UU$. For a branch
$\wp$ on $P \in \P$ and $U \in \UU$, 
the local ring of $\wh R_P$ at $\wp$ is a discrete valuation ring $R_\wp$.  
Let $\wh R_\wp$ be its $\wp$-adic (or equivalently $t$-adic) completion, and let $F_\wp$ be the field of fractions of $\wh R_\wp$. Note that this is a
complete discretely valued field containing $F_U$ and $F_P$
(see \cite{HH}, Section~6, and \cite{HHK}, page 241).  

Associated to the curve $\mX$ and our choice of points $\P$, we
define a \textit{reduction graph} $\Gamma = \Gamma_{\mX, \P}$ whose vertex set is the disjoint
union of the sets $\P$ and $\UU$ and whose edge set is the set $\B$ of
branches. The incidence relation on this (multi-)graph, which makes it
bipartite, is defined by saying that an edge corresponding to a branch $\wp
\in \B$ is incident to the vertices $P \in \P$ and $U \in \UU$
if $\wp$ is a branch on $U$ at
$P$.  We choose the orientation on $\Gamma$ that is associated to the partition $\P \sqcup \UU$ of the vertex set.
We will consider the $\Gamma$-field $F_\bullet = F^{\mX, \P}_\bullet$ defined by
$F^{\mX, \P}_\xi = F_\xi$ for $\xi \in \P, \UU, \B$.

\subsection{Mayer-Vietoris and local-global principles with respect to patches} \label{curve patches}

Using the results of Section~\ref{abstract MV}, we now obtain the desired Mayer-Vietoris sequence for the $\Gamma$-field $F_\bullet$ that is associated as above to the function field $F$ and a choice of points $\P$ on the closed fiber of a normal model $\mX$ (see Theorem~\ref{meyer-veitoris for our field}).  In certain cases we show that this sequence splits into short exact sequences, possibly starting with the $H^2$ term (Corollaries~\ref{MV splits cyclic} and~\ref{MV splits tori}).  Related to this, we obtain a local-global principle for $H^n(F, \zmod m {n-1})$, in this patching context.

\begin{thm} \label{patching for our field} 
With $F$ and $F_\bullet$ as above, $F_\bullet$ is a $\Gamma/F$-field,
and patching holds for finite dimensional vector spaces over
$F_\bullet$.  Thus every linear algebraic group over $F$ is separably factorizable over $F_\bullet$.
\end{thm}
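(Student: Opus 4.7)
The plan is to reduce the three assertions — that $F_\bullet$ is a $\Gamma/F$-field, that patching holds for finite dimensional vector spaces over $F_\bullet$, and that every linear algebraic group over $F$ is separably factorizable — to the patching results for curves developed in \cite{HH} and \cite{HHK}, with the third assertion following at once from Corollary~\ref{patching yields sep fact}.

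First I would check that $F_\bullet$ is a $\Gamma/F$-field. Connectedness of $\Gamma$ comes from the connectedness of the closed fiber $\cX$: because $\mX$ is integral and projective over the complete discrete valuation ring $T$, the special fiber $\cX$ is connected. Combined with the standing hypothesis that $\P$ contains every intersection point of distinct irreducible components of $\cX$, this translates into connectedness of the incidence (multi-)graph $\Gamma$: each $U \in \UU$ is incident to some $P \in \P$ via a branch, and one can move between components of $\cX$ through their shared points in $\P$. The identity $F_\Gamma = F$ reflects the fact that a rational function on $\mX$ is recovered from its images in the completions $\wh R_P$ and $\wh R_U$ subject to agreement in each $\wh R_\wp$; this is part of the inverse limit description of $F$ established in~\cite{HH} (see the discussion around Proposition~6.3 there) and \cite{HHK}.

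For vector space patching, I would appeal directly to the main patching theorem of~\cite{HH} (whose formulation for the pair $(\P, \UU)$ with overlaps $\B$ is precisely that the base change functor from finite dimensional $F$-vector spaces to compatible systems of finite dimensional $F_\xi$-vector spaces is an equivalence). Equivalently, by Theorem~\ref{equiv patching}, one may instead verify that $\GL_n$ is factorizable over $F_\bullet$ for all $n \ge 1$, which is the simultaneous factorization theorem for $\GL_n$ over this collection of fields proved in~\cite{HH} (and reformulated in the present $\Gamma$-field language in \cite{HHK:H1}). The final assertion — separable factorizability of every linear algebraic group over $F$ — is then immediate from Corollary~\ref{patching yields sep fact}.

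The main obstacle here is not new content but rather bookkeeping: the substantive analytic work, namely the $t$-adic approximation/iteration that yields factorization of $\GL_n$-matrices across patches, is already carried out in~\cite{HH}; our task is to verify that the $\Gamma$-field formulation of Section~\ref{gamma fields} matches the patching setup in that reference, so that the abstract results of Section~\ref{abstract patching} become applicable in the geometric context.
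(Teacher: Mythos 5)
Your proof is correct and takes essentially the same approach as the paper, which simply cites \cite[Corollary~3.4]{HHK:H1} for both the $\Gamma/F$-field property and vector space patching, and then invokes Corollary~\ref{patching yields sep fact}; you unwind that citation one level deeper into the underlying results of \cite{HH}, but the logical structure is identical.
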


\begin{proof}
According to \cite[Corollary~3.4]{HHK:H1}, the fields $F_\xi$ for $\xi \in \P \sqcup \UU \sqcup \B$ form a factorization inverse system with inverse limit $F$.  That is, $F_\bullet$ is a $\Gamma/F$-field.  That 
result also asserts that patching holds for finite dimensional vector spaces over $F_\bullet$.  The assertion about being separably factorizable then follows from 
Corollary~\ref{patching yields sep fact}.
\end{proof}

\subsubsection{Global domination and Mayer-Vietoris}

The following result relies on a form of the Weierstrass Preparation Theorem that was proven in~\cite{HHK:Weier}, 
and which extended related results in~\cite{HH} and~\cite{HHK}.  
Another result that is similarly related to Weierstrass Preparation appears at Lemma~\ref{local Weier} below.

\begin{thm}[Global domination for patches]\label{weierstrass}
If $\xi \in \UU \sqcup \P \sqcup \B$ and if $m$ is a positive integer not divisible by $\cha(k)$, then 
the cohomology of $\Z/m
\Z$ over $F_\xi$ is globally dominated.
\end{thm}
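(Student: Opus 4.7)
The plan is to verify condition~(\ref{power Weier cond}) of Proposition~\ref{Kummer globally dominated}, namely that the multiplicative group $(F_\xi\gd)^\times$ is $m$-divisible. So given $a \in (F_\xi\gd)^\times$, the goal is to produce an $m$-th root inside $F_\xi\gd$. A base-change reduction allows us to assume $a \in F_\xi^\times$: by definition $a \in F_\xi \cdot E$ for some finite separable $E/F$ in $F\s$, and passing to the normalization $\mX_E$ of $\mX$ in $E$ (again an integral normal projective $T$-curve with function field $E$), the patches $\xi'$ of $\mX_E$ lying over $\xi$ yield fields $F_{\xi'}$ which are exactly the local factors of $F_\xi \otimes_F E$. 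Hence $F_\xi \cdot E$ embeds as a factor $F_{\xi'}$, and $F_{\xi'}\gd = F_\xi\gd$; after renaming, it suffices to treat $a \in F_\xi^\times$.

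The key technical input is the Weierstrass preparation theorem of \cite{HHK:Weier}. In the two-dimensional cases $\xi \in \P \sqcup \UU$, this writes $a$ as a unit in the relevant complete local ring ($\wh R_P$ or $\wh R_U$) times an element of the form $t^r p$, where $p$ is a distinguished polynomial in an auxiliary parameter, with coefficients in $T$. In the one-dimensional case $\xi = \wp \in \B$, the corresponding factorization is simply $a = t^r u$ with $u$ a unit in the valuation ring of $F_\wp$.

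It remains to show that each factor has an $m$-th root in $F_\xi\gd$. The uniformizer power $t^r$ has an $m$-th root in $F(t^{1/m}) \subseteq F\s$, since $m$ is coprime to $\cha(k)$, hence to $\cha(F)$. A unit $u$ in the complete local ring has residue in some extension of $k$, and this residue becomes an $m$-th power after a finite separable extension of the residue field, which is itself realized by base-changing $\mX$ along a suitable finite separable extension of $F$; Hensel's lemma then lifts this to an $m$-th root of $u$ in $F_{\xi'} \subseteq F_\xi\gd$. Finally, the distinguished polynomial $p$ has coefficients in $T$, so over a finite separable extension of $K \subseteq F$ it splits into linear factors, each of whose $m$-th roots lies in a further separable extension of $F$, hence in $F\s \subseteq F_\xi\gd$. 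The main obstacle is the two-dimensional case $\xi = P \in \P$, where $\wh R_P$ is a two-dimensional complete local domain and the Weierstrass factorization demands a careful choice of regular system of parameters to bring $a$ into standard form; this is exactly what \cite{HHK:Weier} is designed to provide.
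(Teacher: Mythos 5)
Your high-level strategy is the same as the paper's: reduce to the $m$-divisibility criterion Proposition~\ref{Kummer globally dominated}(\ref{power Weier cond}), pass to the normalization of $\mX$ in a finite separable extension of $F$ via \cite[Lemma~6.2]{HH} so that $a$ lives in an $F'_{\xi'}$, and then invoke \cite{HHK:Weier}. But your description of what \cite{HHK:Weier} delivers is inaccurate, and the factor-by-factor argument you build in its place has a genuine gap. Theorems~3.3 and~3.7 of \cite{HHK:Weier} do not produce a classical factorization $a = u\,t^r\,p$ with $p$ a distinguished polynomial; they produce, after passing to an \emph{\'etale cover} $\mX''$ of the model, a factorization $a = b\,c^m$ with $b$ in the function field $F'' \subseteq F\s$ of the cover and $c \in F''_{\xi''} \subseteq F_\xi\gd$. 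That statement is exactly what is needed, and the paper uses it as a black box.

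The gap in your reconstruction is in the handling of the ``distinguished polynomial'' factor. You treat the auxiliary parameter as if it were a rational function in $F$, so that the linear factors $y - \alpha$ (after splitting over a finite separable extension of $K$) would lie in $F\s$ and hence admit $m$-th roots in $F\s \subseteq F_\xi\gd$. But for a general closed point $P \in \P$ — and recall $\P$ is required to contain the singular points where distinct components of $X$ meet — the complete local ring $\wh R_P$ is a two-dimensional complete normal local domain with no preferred local coordinate coming from $F$. The variable appearing in any Weierstrass-type factorization there is a formal local parameter, not an element of a finite extension of $F$; consequently the linear factors and their $m$-th roots are elements of $\wh R_P\s$ and need not lie inside $F_\xi\gd = F_\xi \cdot F\s$. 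This is precisely the difficulty that the \'etale-cover version of Weierstrass preparation in \cite{HHK:Weier} is designed to overcome, by giving a factorization directly into a global part $b \in F''$ and a local $m$-th power, rather than into pieces that must be analyzed one at a time. (A smaller instance of the same problem: for a branch $\wp \in \B$ lying on a component of multiplicity greater than one, your claimed factorization $a = t^r u$ fails, since $t$ is no longer a uniformizer of $\wh R_\wp$; the correct uniformizer lies only in the completion, not in $F$.)
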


\begin{proof} 
By Proposition~\ref{Kummer globally dominated}, it suffices to show that 
$(F_\xi\gd)^\times = ((F_\xi\gd)^\times)^m$.
So let $a \in (F_\xi\gd)^\times$.  Thus $a \in F_\xi F' \subseteq F_\xi\s$ for some finite separable extension $F'/F$.  
Let $\mX' \to \mX$ be the normalization of $\mX$ in $F'$, so that $\mX'$
is a normal projective $T$-curve with function field $F'$. Using
\cite[Lemma~6.2]{HH}, we may identify $F_\xi \otimes_F F'$ with $\prod_{\xi'}
F'_{\xi'}$, where $\xi'$ ranges through the points, components or
branches, respectively, lying above $\xi$ on $\mX'$. We also see by this
description that for each $\xi'$, the field $F'_{\xi'}$ is the
compositum of its subfields $F_\xi$ and $F'$.  Applying \cite[Theorems~3.3 and~3.7]{HHK:Weier} to the curve $\mX'$ and the field $F'_{\xi'}$, 
and again using \cite[Lemma~6.2]{HH}, it follows that
there is an \'etale cover $\mX''$ of $\mX'$
such that $a=bc^m$ for some $b \in F'' \subseteq F\s$ and 
$c \in F''_{\xi''} = F_\xi F'' \subseteq  F_\xi\gd$; here $F''$ is the function field of $\mX''$ and $\xi''$ is any point, component or branch, respectively, on $\mX''$ that lies over $\xi'$ on $\mX'$.  Now $b \in (F\s)^\times$, and $\cha(F)$ does not divide $m$, so $b \in ((F\s)^\times)^m$.  Thus $a \in ((F_\xi\gd)^\times)^m$.
\end{proof}

\begin{thm}[Mayer-Vietoris for Curves] \label{meyer-veitoris for our field} 
Let $A$ be a commutative linear algebraic group over $F$.  Assume that either
\renewcommand{\theenumi}{\roman{enumi}}
\renewcommand{\labelenumi}{(\roman{enumi})}
\begin{enumerate}
\item \label{our mv finite}
$A$ is finite of order not divisible by the characteristic of $k$; or
\item \label{our mv char 0}
$\cha(k)=0$.
\end{enumerate}
Then we have a long exact Mayer-Vietoris
sequence:
\vspace{-.1cm}
\[
\xymatrix{
0 \ar[r] & A(F) \ar[r]^-\Delta & \prod_{P \in \P} A(F_P) \times \prod_{U \in
\UU} A(F_U) \ar[r]^-{-} & \prod_{\wp \in \B} A(F_\wp) 
\ar `d[l] `[lld] [lld] \\
& H^1(F, A) \ar[r]^-\Delta & \prod_{P \in \P} H^1(F_P, A) \times \prod_{U \in
\UU} H^1(F_U, A) \ar[r]^-{-} & \prod_{\wp \in \B} H^1(F_\wp, A) 
\ar `d[l] `[lld] [lld] \\ & H^2(F, A) 
\ar[r]^-\Delta &
{\ \ \cdots \hskip 2.2in}
}
\]
\end{thm}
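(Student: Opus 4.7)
The plan is to derive this sequence as a direct specialization of the Abstract Mayer-Vietoris sequence (Corollary~\ref{mv seq bipartite}) applied to our $\Gamma/F$-field $F_\bullet = F^{\mX,\P}_\bullet$. By construction, the reduction graph $\Gamma = \Gamma_{\mX,\P}$ is bipartite with respect to the vertex partition $\P \sqcup \UU$, and the chosen orientation on $\Gamma$ matches this partition. Hence the output of Corollary~\ref{mv seq bipartite} will already have exactly the displayed shape, with vertex terms splitting as $\prod_{P \in \P} \cdots \times \prod_{U \in \UU} \cdots$ and edge terms running over the branches $\wp \in \B$.

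To apply Corollary~\ref{mv seq bipartite}, I need to check the two hypotheses of Theorem~\ref{long ex sequence}: that $A$ is separably factorizable over $F_\bullet$, and that the cohomology of $A$ over each $F_\xi$ is globally dominated. The first is immediate from Theorem~\ref{patching for our field}, which asserts separable factorizability of every linear algebraic group in this geometric setting (and under the conventions of the paper, a commutative linear algebraic group is in particular smooth). For the second, I would invoke Corollary~\ref{les conditions} in the two cases separately: in case~(\ref{our mv finite}), clause~(\ref{mv finite}) of that corollary applies, and in case~(\ref{our mv char 0}), clause~(\ref{mv char 0}) applies. Either way, the required input is global domination of $\Z/\ell\Z$-cohomology over each $F_\xi$ for the relevant primes $\ell$; and this is precisely what Theorem~\ref{weierstrass} provides. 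In case~(\ref{our mv finite}) the hypothesis that the order of $A$ is prime to $\cha(k)$ ensures that all the primes $\ell$ dividing $|A|$ satisfy $\ell \ne \cha(k)$, and in case~(\ref{our mv char 0}) the hypothesis $\cha(k) = 0$ renders that condition vacuous for every prime.

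With both hypotheses of Theorem~\ref{long ex sequence} verified, I obtain a long exact sequence in Galois cohomology, which by Corollary~\ref{mv seq bipartite} (applied with $\mc V_0 = \P$ and $\mc V_1 = \UU$) rewrites as the displayed Mayer-Vietoris sequence. There is essentially no obstacle remaining at this stage: the genuinely nontrivial inputs, namely the patching theorem (Theorem~\ref{patching for our field}, which packages the results of~\cite{HH,HHK,HHK:H1}) and global domination (Theorem~\ref{weierstrass}, which combines the Weierstrass preparation statements of~\cite{HHK:Weier} with Proposition~\ref{Kummer globally dominated}), have already been assembled. The proof is therefore essentially a bookkeeping exercise of matching hypotheses and reading off the conclusion from the abstract framework of Section~\ref{abstract MV}.
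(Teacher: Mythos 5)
Your proposal is correct and matches the paper's own proof essentially line for line: the same invocations of Theorem~\ref{patching for our field} for separable factorizability, Theorem~\ref{weierstrass} for global domination of $\Z/\ell\Z$-cohomology over the $F_\xi$, Corollary~\ref{les conditions} to dispatch the two cases, and Corollary~\ref{mv seq bipartite} to put the long exact sequence into the displayed bipartite Mayer-Vietoris form. The only detail worth making slightly more explicit is that in case~(ii) one uses $\cha(k)=0 \Rightarrow \cha(F)=0$ so that clause~(\ref{mv char 0}) of Corollary~\ref{les conditions} is indeed the one that applies and Theorem~\ref{weierstrass} supplies global domination for \emph{every} prime $\ell$; but you clearly have this in mind.
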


\begin{proof}
Let $\Gamma$ be the bipartite graph $\Gamma_{\mX, \P}$ as above.
By Theorem~\ref{patching for our field}, $A$ is separably factorizable over $F_\bullet$.
Now for each prime $\ell$ unequal to the characteristic of $k$, 
and each $\xi \in \UU \sqcup \P \sqcup \B$,
the cohomology of $\Z/\ell
\Z$ over $F_\xi$ is globally dominated, by Theorem~\ref{weierstrass}.
The conclusion now follows from Corollaries~\ref{les conditions} and~\ref{mv seq bipartite}.
\end{proof}

\subsubsection{Local-global principles with respect to patches}

\begin{lem} \label{local Weier}
Let $m$ be a positive integer that is not divisible by $\cha(K)$.  Let $P$ be a closed point of $X$, let $\wp_1,\dots,\wp_s$ be the branches of $X$ at $P$, and let $a_i \in F_{\wp_i}^\times$.  Then there exists $a \in F_P^\times$ such that $a_i/a \in (F_{\wp_i}^\times)^m$ for every $i$. 
\end{lem}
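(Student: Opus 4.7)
The plan is to combine weak approximation for the discrete valuations on $F_P$ coming from the branches at $P$ with the fact that the $m$-th power map is open near $1$ in each complete discretely valued field $F_{\wp_i}$.

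First I would carry out a local preparation step. For each $i$, the field $F_{\wp_i}$ is complete discretely valued with $\cha(F_{\wp_i}) = \cha(F) = \cha(K)$ not dividing $m$. Standard convergence arguments (via the $p$-adic logarithm/exponential, or an iterated Hensel lifting) then produce an integer $N_i \ge 1$ with
\[
1 + \wp_i^{N_i}\wh R_{\wp_i} \subseteq (\wh R_{\wp_i}^\times)^m.
\]
Equivalently, any element of $F_{\wp_i}^\times$ differing from $a_i$ by a term of $\wp_i$-adic valuation at least $N_i + v_i(a_i)$ already lies in $a_i \cdot (F_{\wp_i}^\times)^m$.

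Next I would apply weak approximation. The height-one primes $\wp_1,\dots,\wp_s$ of $\wh R_P$ are distinct, so the associated discrete valuations $v_1,\dots,v_s$ on $F_P$ are pairwise inequivalent. Since $F_P$ is dense in each $F_{\wp_i}$ (as $F_{\wp_i}$ is the fraction field of the completion of the DVR $R_{\wp_i} \subseteq F_P$), I first choose $\tilde a_i \in F_P^\times$ with $v_i(\tilde a_i - a_i) \ge N_i + v_i(a_i) + 1$ (so in particular $v_i(\tilde a_i) = v_i(a_i)$). The classical approximation theorem for pairwise inequivalent discrete valuations on $F_P$ then yields $a \in F_P^\times$ satisfying $v_i(a - \tilde a_i) \ge N_i + v_i(a_i) + 1$ for every $i$.

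By the ultrametric inequality one gets $v_i(a - a_i) \ge N_i + v_i(a_i)$, i.e.\ $a/a_i \in 1 + \wp_i^{N_i}\wh R_{\wp_i} \subseteq (F_{\wp_i}^\times)^m$, whence $a_i/a \in (F_{\wp_i}^\times)^m$ as required. The step I expect to be most delicate is the local preparation in the wild case where the residue characteristic of $\wh R_{\wp_i}$ divides $m$: there openness of $(F_{\wp_i}^\times)^m$ does not come from a direct Hensel argument on $X^m - u$, but from continuity of the $m$-th power map at $1$ in $F_{\wp_i}^\times$, which still holds under the hypothesis $\cha(K) \nmid m$. Once that bound on $N_i$ is in hand, the remainder of the proof is a direct application of weak approximation.
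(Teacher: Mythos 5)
Your proof is correct and follows essentially the same route as the paper's: both use weak approximation (the paper cites Bourbaki's Approximation Theorem directly; you spell out the two-step version, first approximating in each $F_{\wp_i}$ and then combining by approximation within $F_P$) together with the fact that elements of $F_{\wp_i}^\times$ sufficiently close to $1$ are $m$-th powers (the paper cites the strong form of Hensel's Lemma applied to $Y^m - a_i/a$; you invoke the equivalent openness of $(F_{\wp_i}^\times)^m$ via log/exp or iterated Hensel lifting). One small caveat on your final paragraph: you suggest a ``direct Hensel argument on $X^m-u$'' fails in the wild case, but it is precisely the strong (quantitative) form of Hensel's lemma on that polynomial that the paper uses, and it does handle the case where the residue characteristic divides $m$ as long as $\cha K\nmid m$ (one needs the initial approximation good to order exceeding $2\,v_{\wp_i}(m)$). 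Also, ``continuity of the $m$-th power map at $1$'' is not by itself sufficient — what is needed is that the map is open near $1$, which is what Newton iteration or the $p$-adic exponential provide — but since you invoked those correctly just above, this is only a slip of wording, not a gap.
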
 

\begin{proof}
Since $F_{\wp_i}$ is the $\wp_i$-adic completion of $F_P$, the Approximation Theorem \cite[VI.7.3, Theorem~2]{Bo:CA} implies that the elements $a_i$ can all be $\wp_i$-adically approximated arbitrarily well by an element $a \in F_P^\times$.  The result now follows by applying the strong form of Hensel's Lemma (see \cite[III.4.5, Corollary~1 to Theorem~2]{Bo:CA}) to the polynomials $Y^m - a_i/a \in \wh R_{\wp_i}[Y]$.
\end{proof}

\begin{thm}[Local-Global Principle] \label{patch local-global}
Let $\wh X$ be a normal projective curve over a complete discrete valuation ring $T$ with residue field $k$, let $\mc P$ be a non-empty finite subset of the closed fiber $X$ that includes the points at which distinct irreducible components of $X$ meet, and let $\mc U$ be the set of components of $X \smallsetminus \mc P$.  
Suppose that $m$ is an integer not divisible by the
characteristic of $k$. Then for each integer $n > 1$, the natural map
\[H^n(F, \zmod m {n-1}) \to \prod_{P \in \P} H^n(F_P, \zmod m {n-1}) \times
\prod_{U \in \UU} H^n(F_U, \zmod m {n-1})\]
is injective.
\end{thm}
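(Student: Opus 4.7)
The plan is to apply the Abstract Local-Global Principle (Theorem~\ref{local-global bipartite}) to the $\Gamma/F$-field $F_\bullet = F^{\mX,\P}_\bullet$ set up in the notation subsection, replacing the index $n$ in the abstract theorem by $n-1$ (so that the condition $n > 0$ there becomes $n > 1$ here). This reduces the task to verifying the three hypotheses of Theorem~\ref{local-global bipartite} in the geometric situation at hand.

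For hypothesis~(\ref{cond bip}), the reduction graph $\Gamma = \Gamma_{\mX,\P}$ is bipartite by construction, with partition $\mc V_0 := \P$ and $\mc V_1 := \UU$, since every branch $\wp \in \B$ is incident to one point and one component. This is the same orientation that was fixed in the notation block, so the identification $l(\wp) \in \P$, $r(\wp) \in \UU$ is immediate. For hypothesis~(\ref{cond comp}), one invokes Theorem~\ref{weierstrass} (Global domination for patches), which states that for every $\xi \in \P \sqcup \UU \sqcup \B$ the cohomology of $\Z/m\Z$ over $F_\xi$ is globally dominated, precisely under the assumption $\cha(k) \nmid m$. For hypothesis~(\ref{cond local factor}), one needs, for each $P \in \mc V_0 = \P$ and each choice of $a_\wp \in F_\wp^\times$ for $\wp$ ranging over branches at $P$, an element $a \in F_P^\times$ with $a_\wp/a \in (F_\wp^\times)^m$; this is exactly Lemma~\ref{local Weier}, which was proved above using approximation together with Hensel's Lemma.

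With all three hypotheses verified, Theorem~\ref{local-global bipartite} yields the injectivity of
\[
H^n(F, \zmod m {n-1}) \to \prod_{v \in \mc V} H^n(F_v, \zmod m {n-1})
\]
for every $n > 1$. Since the vertex set decomposes as $\mc V = \P \sqcup \UU$, the right-hand product splits as
\[
\prod_{P \in \P} H^n(F_P, \zmod m {n-1}) \times \prod_{U \in \UU} H^n(F_U, \zmod m {n-1}),
\]
which is exactly the target map in the theorem.

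There is no real obstacle at the level of this theorem: the deep inputs (Bloch-Kato via Proposition~\ref{cup products}, separable factorization over $F_\bullet$ via Theorem~\ref{patching for our field} and Corollary~\ref{patching yields sep fact}, and the Weierstrass-preparation-type result packaged into Theorem~\ref{weierstrass}) are all already established. The only thing to check is the bookkeeping that $\mc V_0 = \P$ is the right choice for hypothesis~(\ref{cond local factor}) — that is, that the factorization condition must be verified at points rather than at components, which matches Lemma~\ref{local Weier} precisely. The conceptually substantive step in the background is the global domination Theorem~\ref{weierstrass}, whose proof relies on the étale Weierstrass factorization of~\cite{HHK:Weier}; but for the present proof that result is used as a black box.
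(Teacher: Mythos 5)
Your proposal is correct and matches the paper's proof essentially verbatim: the paper likewise reduces to the Abstract Local-Global Principle (Theorem~\ref{local-global bipartite}) by verifying hypothesis~(\ref{cond bip}) from the bipartite structure of $\Gamma_{\mX,\P}$ with $\mc V_0 = \P$ and $\mc V_1 = \UU$, hypothesis~(\ref{cond comp}) from Theorem~\ref{weierstrass}, and hypothesis~(\ref{cond local factor}) from Lemma~\ref{local Weier}. Your additional observation that the argument depends, through Theorem~\ref{long ex sequence}, on separable factorizability of $\Z/m\Z(n-1)$ over $F_\bullet$ (supplied by Theorem~\ref{patching for our field} and Corollary~\ref{patching yields sep fact}) is a useful clarification, since the statement of Theorem~\ref{local-global bipartite} does not list that hypothesis explicitly, but this does not change the fact that the route is the same as the paper's.
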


\begin{proof}
The graph $\Gamma_{\mX, \P}$ is bipartite, with the set of vertices $\mc V$ partitioned as $\mc V_0 \sqcup \mc V_1$ with $\mc V_0 = \P$ and $\mc V_1 = \UU$.  So hypothesis~(\ref{cond bip}) of Theorem~\ref{local-global bipartite}
holds.  Hypothesis~(\ref{cond comp}) of that theorem, 
concerning global domination, also holds, by 
Theorem~\ref{weierstrass}.  Finally, hypothesis~(\ref{cond local factor}), 
in this case concerning the lifting of elements 
of the fields $F_{\wp_i}^\times$
to an element of $F_P^\times$ modulo $m$-th powers,
holds by Lemma~\ref{local Weier}.  Thus Theorem~\ref{local-global bipartite} applies, and the conclusion follows. 
\end{proof}

In some cases we can allow arbitrary Tate twists, and as a result the Mayer-Vietoris sequence splits into shorter exact sequences:

\begin{cor} \label{MV splits cyclic}
Let $m$ be an integer not divisible by the
characteristic of $k$, and suppose that the degree $[F(\bmu_m):F]$ is prime to $m$ (e.g.\ if $m$ is prime or $F$ contains a primitive $m$-th root of unity).  Let $r$ be any integer.  Then the Mayer-Vietoris sequence in Theorem~\ref{meyer-veitoris for our field} for $A = \zmod m r$ splits into exact sequences 
\[
\xymatrix{
0 \ar[r] & A(F) \ar[r] & \prod_{P \in \P} A(F_P) \times \prod_{U \in
\UU} A(F_U) \ar[r] & \prod_{\wp \in \B} A(F_\wp) 
\ar `d[l] `[lld] [lld] \\
& H^1(F, A) \ar[r] & \prod_{P \in \P} H^1(F_P, A) \times \prod_{U \in
\UU} H^1(F_U, A) \ar[r] & \prod_{\wp \in \B} H^1(F_\wp, A) \ar[r] &
0
}
\]
and
\[
\xymatrix{
0 \ar[r] & H^n(F, A) \ar[r] & \prod_{P \in \P} H^n(F_P, A) \times \prod_{U \in
\UU} H^n(F_U, A) \ar[r] & \prod_{\wp \in \B} H^n(F_\wp, A) \ar[r] &
0
}
\]
for all $n>1$.
\end{cor}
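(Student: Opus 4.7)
The plan is to reduce all of the claimed splittings to a single local-global statement: that for every $n \geq 2$ and every integer $r$, the natural map
\[H^n(F, \zmod m r) \to \prod_{P \in \P} H^n(F_P, \zmod m r) \times \prod_{U \in \UU} H^n(F_U, \zmod m r)\]
is injective. Once this is in hand, each connecting homomorphism $\prod_\wp H^{n-1}(F_\wp, \zmod m r) \to H^n(F, \zmod m r)$ in the Mayer-Vietoris sequence of Theorem~\ref{meyer-veitoris for our field} vanishes, and the long exact sequence automatically breaks into the short exact sequences displayed in the statement (with the $H^0$--$H^1$ portion packaged as the six-term exact sequence, whose trailing $\to 0$ is exactly the vanishing of the connecting map into $H^2(F, A)$).

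The core difficulty is that Theorem~\ref{patch local-global} gives injectivity only for the specific twist $n-1$, whereas here $r$ is arbitrary. To circumvent this, I would pass to the finite separable extension $F' = F(\bmu_m)$. By hypothesis $d := [F':F]$ is coprime to $m$, so via the standard identity $\cores \circ \res = d \cdot \mathrm{id}$, restriction from $F$ to $F'$ is injective on the $m$-torsion group $H^n(F, \zmod m r)$. Over $F'$, which contains a primitive $m$-th root of unity, the Galois modules $\zmod m r$ and $\zmod m {n-1}$ are isomorphic, so Theorem~\ref{patch local-global}, applied to a suitable model with function field $F'$, delivers the local-global injectivity over $F'$ for every twist.

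Concretely, I would take $\mX' \to \mX$ to be the normalization of $\mX$ in $F'$; this remains a normal projective $T$-curve with residue field $k$. I would then choose a finite set $\P' \subseteq X'$ containing the preimage of $\P$ together with all points at which distinct components of $X'$ meet, so that Theorem~\ref{patch local-global} applies to $(\mX', \P')$. By \cite[Lemma~6.2]{HH}, for each $v \in \P \sqcup \UU \sqcup \B$ the tensor product $F_v \otimes_F F'$ decomposes as $\prod_{v'|v} F'_{v'}$, where $v'$ ranges over vertices or branches of $\Gamma_{\mX', \P'}$ lying over $v$, and each $F'_{v'}$ is a finite separable extension of $F_v$. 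Given $\alpha \in H^n(F, \zmod m r)$ mapping to zero in every $H^n(F_v, \zmod m r)$, the restriction $\res_{F'/F}(\alpha)$ then maps to zero in every $H^n(F'_{v'}, \zmod m r)$; the local-global principle over $F'$ forces $\res_{F'/F}(\alpha) = 0$, whence $d \alpha = \cores \res \alpha = 0$ and hence $\alpha = 0$.

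The step I expect to be the main obstacle is the bookkeeping involved in verifying that the collection of patches on $\mX'$ refines, up to finite extension, the collection of patches on $\mX$ compatibly with the local-global maps on both sides. This is exactly the content of \cite[Lemma~6.2]{HH}, combined with the freedom to enlarge $\P'$; once that dictionary is in place, the remainder of the argument is formal restriction-corestriction, together with the observation that Tate twists become trivial after adjoining $\bmu_m$.
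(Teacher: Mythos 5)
Your argument is correct and follows essentially the same route as the paper: first establish injectivity of $H^n(F, \zmod m r) \to \prod_{P} H^n(F_P, \zmod m r) \times \prod_U H^n(F_U, \zmod m r)$ for all $n>1$ so the long exact sequence of Theorem~\ref{meyer-veitoris for our field} splits, and obtain that injectivity by passing to $F' = F(\bmu_m)$ (where the Tate twist is immaterial, so Theorem~\ref{patch local-global} applies over a model of $F'$) and then descend via $\cores_{F'/F} \circ \res_{F'/F} = [F':F]\cdot\mathrm{id}$, which is injective on $m$-torsion by the coprimality hypothesis. The paper's proof is phrased a bit more tersely, writing $F'_\xi = F_\xi(\bmu_m)$ and appealing to the injectivity of $\iota_{F'}$ without spelling out the model $\mX'$ and the enlargement of $\mc P'$; your version fills in exactly that bookkeeping via \cite[Lemma~6.2]{HH} and the freedom to add points to $\mc P'$, which is the correct justification.
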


\begin{proof}
If $F$ contains a primitive $m$-th root of unity, then 
$A=\Z/m\Z = \zmod m {n-1}$ over $F$ and its extension fields, for all
$n$.  Hence in the Mayer-Vietoris sequence in
Theorem~\ref{meyer-veitoris for our field}(\ref{our mv finite}), the
maps $\iota_F : H^n(F,A) \to \prod_{P \in \mc P} H^n(F_P,A) \times \prod_{U \in \mc U} H^n(F_U,A)$ are injective for all $n>1$, by Theorem~\ref{patch local-global}.  The result now follows in this case.

More generally, let $F' = F(\bmu_m)$ and similarly for $F_P$ and
$F_U$. As above, $\iota_{F'}$ is injective. Using the naturality of
$\iota_F$ with respect to $F$, we have $\ker(\iota_F) \subseteq
\ker(\iota_{F'} \circ \res_{F'/F})$. Further, by the injectivity of
$\iota_{F'}$, $\ker(\iota_{F'} \circ \res_{F'/F}) = \ker(\res_{F'/F})
\subseteq \ker(\cores_{F'/F} \circ \res_{F'/F})$.  But $\cores \circ
\res:H^n(F,A) \to H^n(F,A)$ is multiplication by $[F':F]$ (\cite{GS},
Proposition~3.3.7), which is injective since $|A|=m$ and $[F':F]$ is
prime to $m$.  Thus these kernels are all trivial, and again the
result follows. 
\end{proof}

In Corollary~\ref{MV splits cyclic}, the initial six terms need {\em not} split into two three-term short exact sequences; i.e.\ the map on $H^1(F,A)$ need not be injective.  In fact, for $A = \Z/m \Z$ with $m>1$, a necessary and sufficient condition for splitting is that the reduction graph $\Gamma$ is a tree (\cite{HHK:H1}, Corollaries~5.6 and~6.4).  
But in the next result, there is splitting at every level.

\begin{cor} \label{MV splits tori}
Suppose that $\cha(k)=0$ and that $K$ contains a primitive $m$-th
root of unity for all $m \ge 1$. Then the Mayer-Vietoris sequence in
Theorem~\ref{meyer-veitoris for our field}(\ref{our mv char 0}) for
$\Gm$ splits into exact sequences 
\[
\xymatrix@C=.5cm{
0 \ar[r] & H^n(F, \Gm) \ar[r] & \prod_{P \in \P} H^n(F_P, \mbb
G_m) \times \prod_{U \in \UU} H^n(F_U, \Gm) \ar[r] & \prod_{\wp
\in \B} H^n(F_\wp, \Gm) \ar[r] &
0
}
\]
for all $n\ge 0$.
\end{cor}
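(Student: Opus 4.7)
The plan is to start from the long exact Mayer--Vietoris sequence produced by Theorem~\ref{meyer-veitoris for our field}(\ref{our mv char 0}) for $A=\Gm$, and to show that every connecting map $\prod_\wp H^n(F_\wp,\Gm)\to H^{n+1}(F,\Gm)$ vanishes. This is equivalent to showing that the ``subtraction'' map
\[
\rho_n\colon \prod_{P\in\P}H^n(F_P,\Gm)\times\prod_{U\in\UU}H^n(F_U,\Gm)\to\prod_{\wp\in\B}H^n(F_\wp,\Gm)
\]
is surjective for every $n\ge 0$.

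For $n=0$, $\rho_0$ is the map $(a_v)_v\mapsto(a_{l(\wp)}a_{r(\wp)}^{-1})_\wp$, whose surjectivity is precisely the factorizability of $\Gm$ over $F_\bullet$; this follows from Theorem~\ref{patching for our field} together with Theorem~\ref{equiv patching}. For $n=1$, both source and target of $\rho_1$ vanish by Hilbert~90, so there is nothing to check. The substantive case is $n\ge 2$.

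For $n\ge 2$ we reduce to the cyclic case that is already handled by Corollary~\ref{MV splits cyclic}. Given $(\alpha_\wp)_\wp$ in the target, each class $\alpha_\wp\in H^n(F_\wp,\Gm)$ is torsion, for example by Lemma~\ref{torsion coho surjects} applied to the connected group $\Gm$; since $\B$ is finite we may pick a single $m\ge 1$ annihilating every $\alpha_\wp$. The Kummer sequence then supplies a natural surjection $H^n(F_\wp,\bmu_m)\twoheadrightarrow H^n(F_\wp,\Gm)[m]$, so we lift to some $(\tilde\alpha_\wp)_\wp\in\prod_\wp H^n(F_\wp,\bmu_m)$. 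The standing hypothesis that $K$ contains a primitive $m$-th root of unity identifies $\bmu_m$ with the constant Galois module $\Z/m\Z$ over every $F_\xi$ and makes $[F(\bmu_m):F]=1$ coprime to $m$, so Corollary~\ref{MV splits cyclic} applied to $\bmu_m$ yields $(\beta_v)_v\in\prod_v H^n(F_v,\bmu_m)$ whose image under the analogous subtraction map is $(\tilde\alpha_\wp)_\wp$. Pushing $(\beta_v)$ through Kummer into $\prod_v H^n(F_v,\Gm)$ and invoking naturality in the field then produces a preimage of $(\alpha_\wp)$ under $\rho_n$.

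The main obstacle is the step $n\ge 2$; the cases $n=0,1$ are essentially formal. The two essential ingredients there are the torsion-ness of $H^n(F_\wp,\Gm)$ for $n\ge 1$, which lets us choose a uniform $m$ across the finite set $\B$, and the already-established cyclic local-global splitting of Corollary~\ref{MV splits cyclic}, which is available precisely because the hypothesis on $K$ forces each $F_\xi$ to contain all roots of unity.
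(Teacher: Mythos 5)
Your proof is correct and follows essentially the same approach as the paper: it uses Lemma~\ref{torsion coho surjects}/the Kummer sequence to lift a tuple in $\prod_\wp H^n(F_\wp,\Gm)$ to $\bmu_m$-coefficients for a uniform $m$, then invokes the finite-coefficient splitting (which you access via Corollary~\ref{MV splits cyclic} and the paper via Theorem~\ref{patch local-global} together with Theorem~\ref{meyer-veitoris for our field}, which amounts to the same thing), and pushes back to $\Gm$ by naturality of Kummer theory. The only difference is cosmetic bookkeeping: you establish surjectivity of $\rho_n$ in degrees $0$ and $1$ directly (via factorizability of $\Gm$, respectively Hilbert~90), whereas the paper obtains the low-degree information indirectly from injectivity of $H^n(F,\Gm)\to\prod$ for $n\ge 1$ and exactness of the long sequence.
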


\begin{proof}
By Theorem~\ref{meyer-veitoris for our field}(\ref{our mv char 0}),
it suffices to prove the injectivity of the maps
$H^n(F, \Gm) \to \prod_{P \in \P} H^n(F_P, \Gm) \times \prod_{U \in
\UU} H^n(F_U, \Gm)$ for all $n \ge 1$.
The case $n=1$ follows from the vanishing of $H^1(F,\Gm)$ by
Hilbert's Theorem~90.  It remains to show injectivity for $n>1$. 
Since $K$ contains all roots of unity, for each $m$ we may identify the Galois module $\Gm[m]=\bmu_m$ with $\Z/m
\Z$ and $\zmod m {n-1}$.

By Theorem~\ref{meyer-veitoris for our field}(\ref{our mv char 0}), 
the desired injectivity will follow from the surjectivity of the map
\[\prod_{P \in \P} H^{n-1}(F_P, \Gm) \times \prod_{U \in
\UU} H^{n-1}(F_U, \Gm) \to \prod_{\wp \in \B} H^{n-1}(F_\wp, \Gm).\] 
So let $\alpha \in \prod_{\wp \in \B} H^{n-1}(F_\wp, \Gm)$, and write $\alpha = (\alpha_\wp)_{\wp \in \mc B}$, with $\alpha_\wp \in H^{n-1}(F_\wp, \Gm)$.  
For each $\wp \in \mc B$, the element $\alpha_\wp$ is the image of
some $\til\alpha_\wp \in H^{n-1}(F_\wp, \bmu_{m_\wp})$ for some $m_\wp \ge
1$, by 
Lemma~\ref{torsion coho surjects}.
Since $\mc B$ is finite, we may let $m$ be the least common multiple of the integers $m_\wp$.  Thus $\alpha$ is the image of $\til\alpha = 
(\til \alpha_\wp)
\in \prod_{\wp \in \B} H^{n-1}(F_\wp, \bmu_m)
$.  

By Theorem~\ref{patch local-global} and Theorem~\ref{meyer-veitoris for our field}(\ref{our mv finite}), the map 
\[\prod_{P \in \P} H^{n-1}(F_P, \zmod m {n-1}) \times \prod_{U \in
\UU} H^{n-1}(F_U, \zmod m {n-1}) \to \prod_{\wp \in \B} H^{n-1}(F_\wp,\zmod m {n-1})\] 
is surjective.  
So by the identification $\bmu_m = \zmod m {n-1}$, it follows that  
$\til\alpha$ is the image of some element $\til\beta \in
\prod_{P \in \P} H^{n-1}(F_P, \bmu_m) \times \prod_{U \in
\UU} H^{n-1}(F_U, \bmu_m)$.
Let $\beta$ be the image of $\til\beta$ in $\prod_{P \in \P} H^{n-1}(F_P, \Gm) \times \prod_{U \in
\UU} H^{n-1}(F_U, \Gm)$.
Since the diagram
\[
\xymatrix{
\til\beta \in \prod_{P \in \P} H^{n-1}(F_P, \bmu_m) \times \prod_{U \in
\UU} H^{n-1}(F_U, \bmu_m) \ \ \ \ \ \ar[r] \ar[d] & \ \ \ \prod_{\wp \in \B} H^{n-1}(F_\wp,\bmu_m) \ni \til\alpha \ar[d] \\
\beta \in \prod_{P \in \P} H^{n-1}(F_P, \Gm) \times \prod_{U \in
\UU} H^{n-1}(F_U, \Gm) \ \ \ \ \ \ar[r]  & \ \ \ \prod_{\wp \in \B} H^{n-1}(F_\wp,\Gm) \ni \alpha
}
\]
commutes, $\beta$ maps to $\alpha$, as desired.
\end{proof}

Note that Corollaries~\ref{MV splits cyclic} and~\ref{MV splits tori} also provide patching results for cohomology, in addition to local-global principles.  Namely, for $n \ne 1$ in Corollary~\ref{MV splits cyclic}, or any $n$ in Corollary~\ref{MV splits tori}, those assertions show the following.  Given a collection of elements $\alpha_\xi \in H^n(F_\xi,A)$ for all $\xi \in \P \sqcup \UU$ such that $\alpha_P, \alpha_U$ induce the same element of $H^n(F_\wp,A)$ whenever $\wp$ is a branch on $U$ at $P$, there exists a unique $\alpha \in H^n(F,A)$ that induces all the $\alpha_\xi$.  In the situation of Theorem~\ref{meyer-veitoris for our field}, where splitting is not asserted, a weaker patching statement still follows: given elements $\alpha_\xi$ as above, there exists such an $\alpha$, but it is not necessarily unique.

\subsection{Local-global principles with respect to points} \label{curve points}

In this section we will investigate how to translate our results into
local-global principles 
in terms of the points on the closed fiber $X$ of $\mX$, rather than in terms of patches.  
Extending our earlier notation, if 
$P \in X$ is any point (not necessarily closed), we let $F_P$ denote the fraction field
of the complete local ring $\wh R_P := \wh {\mc O}_{\mX, P}$.  In 
particular, if $\eta$ is the generic point of an irreducible component $X_0$ of the closed fiber $X$, then $F_\eta$ is a complete discretely valued field, and it is 
the same as the $\eta$-adic
completion of~$F$. 

\subsubsection{The field $F_\eta^h$}

In order to bridge the gap between the fields $F_U$ and 
$F_\eta$, where $\eta$ is the generic point of 
the irreducible component $X_0 \subseteq X$ containing
$U$, we will consider a
subfield $F_\eta^h$ of $F_\eta$ that has many of the same properties
but is much smaller.  

Namely, with notation as above, let $R^h_\eta$
be the direct limit of the rings $\wh R_V$, where $V$ ranges over the
non-empty open subsets of $X_0$ that do not meet any other
irreducible component of $X$.  Equivalently, we may fix one such
non-empty open subset $U$, and consider the direct limit over the
non-empty open subsets $V$ of $U$.  Here $R^h_\eta$ is a subring of
$\wh R_\eta$; and we let $F_\eta^h$ be its fraction field.  Thus
$F_\eta^h$ is a subfield of $F_\eta$.

\begin{lem} \label{hens ring}
Let $X_0 \subseteq X$ be an irreducible component with generic point $\eta$, and let $U \subset X_0$ be a non-empty open subset meeting no other component.  
Then $R^h_\eta$ is a Henselian discrete valuation ring with 
respect to the $\eta$-adic valuation, having residue field $k(U)=k(X_0)$.  Its fraction field $F_\eta^h$ is the filtered direct limit of the fields $F_V$, where 
$V$ ranges over the non-empty open subsets of $U$.
\end{lem}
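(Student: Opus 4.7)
The plan is to exploit the fact that $R^h_\eta$ is the filtered direct limit of the $t$-adically complete Noetherian domains $\wh R_V$, all of which share the common reduction $\wh R_V/t\wh R_V = k[V]$, as $V$ ranges over non-empty opens of $U$. First I would show that every nonzero element of $R^h_\eta$ has the form $t^n u$ with $u$ a unit. Given a representative $f \in \wh R_V$ with $v_\eta(f) = 0$, the reduction $\bar f \in k[V]$ is nonzero, so its zero locus is a proper closed subset of $V$; shrinking $V$ to a smaller open $V' \subset U$ on which $\bar f$ has no zeros, the standard geometric-series formula in the $t$-adically complete ring $\wh R_{V'}$ upgrades the mod-$t$ unit status of $\bar f$ to $f$ being a unit in $\wh R_{V'}$. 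Since $v_\eta$ is the discrete valuation inherited from the DVR $\mc O_{\mX,\eta}$ and $t$ is a uniformizer, this shows $R^h_\eta$ is a DVR with respect to $v_\eta$.

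Next, the residue field identification $R^h_\eta/(t) = \varinjlim_V k[V] = k(X_0) = k(U)$ follows because quotients commute with filtered colimits, and the colimit of coordinate rings of shrinking non-empty opens of the irreducible curve $X_0$ is its function field. For the Henselian property, given a monic $f \in R^h_\eta[x]$ and a simple root $\bar a \in k(X_0)$ of $\bar f$, I would descend to a single $\wh R_V$ on which the coefficients of $f$, a lift $a_0 \in \wh R_V$ of $\bar a$, and the unit condition $\bar f'(\bar a) \neq 0$ are all simultaneously realized --- possible since only finitely many coefficients appear, and the zero locus of $\bar f'(\bar a)$, a nonzero element of $k(X_0)$, can be removed from $V$. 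On such a $V$, the pair $(\wh R_V, t\wh R_V)$ is Henselian by the standard fact that $I$-adically complete Noetherian rings are Henselian along $I$, so $a_0$ lifts uniquely to a root $a \in \wh R_V \subset R^h_\eta$.

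Finally, the identification $F_\eta^h = \varinjlim_V F_V$ is formal: fraction fields commute with filtered direct limits of integral domains along injective transition maps, and each $\wh R_V \hookrightarrow \wh R_{V'}$ (for $V' \subset V$) is injective by Krull's intersection theorem applied to the Noetherian domain $\wh R_{V'}$. I expect the main technical obstacle to be the Hensel step for the non-local ring $\wh R_V$; this is handled by invoking the Henselian pair structure on $(\wh R_V, t\wh R_V)$, which reduces the problem to a finite amount of algebra descended onto a single open $V$. The other steps are direct manipulations with filtered colimits and the geometric structure of $X_0$.
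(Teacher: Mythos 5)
Your argument for the discrete-valuation-ring part has a genuine gap. You assert that $\wh R_V/t\wh R_V = k[V]$ and that $t$ is a uniformizer for the $\eta$-adic valuation; both fail when the component $X_0$ occurs with multiplicity $e>1$ in the closed fiber $X$, a case the lemma does not exclude ($\wh X$ is only assumed integral and normal, so $X$ may be nonreduced). In that case $v_\eta(t)=e>1$, so the putative factorization $f = t^n u$ with $u$ a unit is impossible for elements of $R^h_\eta$ whose valuation is not a multiple of $e$; and $\wh R_V/t\wh R_V$ is a nonreduced thickening of $k[V]$, which also invalidates the computation $R^h_\eta/(t)=k(X_0)$. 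Even in the reduced case, passing from ``valuation-zero elements become units after shrinking $V$'' to ``every element factors as $t^n u$'' silently requires that $f/t^n$ lie back in some $\wh R_{V'}$. This is true --- because $\wh R_V$ is a Krull domain and, since $V$ meets only the one component $X_0$, the prime $\eta$ is the unique height-one prime over $t$ --- but it is a real step and you do not take it. The paper's proof avoids both issues: it takes an arbitrary $\alpha=a/b\in F^h_\eta$ with $v_\eta(\alpha)\ge 0$, notes that the non-closed-fiber prime divisors of $b$ meet $V$ in finitely many points, deletes these so that $b$ is invertible in $\wh R_{V'}[t^{-1}]$, and concludes that $a/b$ has no poles and hence lies in $\wh R_{V'}$. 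That argument is indifferent to the multiplicity of $X_0$ and does not presuppose a choice of uniformizer.

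Your Henselianity argument is a legitimate alternative route: the paper lifts sections of \'etale algebras using Hensel's lemma for $t$-adic rings (Lemma~4.5 of \cite{HHK}), whereas you use the monic-polynomial criterion together with the fact that a $t$-adically complete Noetherian pair is Henselian. In the nonreduced case you should phrase this with respect to the ideal $\eta$ rather than $(t)$ --- they have the same radical and so yield the same Henselian pair --- and descend the coefficients and the simple root $\bar a\in k(X_0)$ into $\wh R_V/\eta\wh R_V=k[V]$ for $V$ small, rather than into $\wh R_V/t\wh R_V$. The identification of $F_\eta^h$ with $\varinjlim F_V$ is handled the same way in both proofs.
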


\begin{proof}
Each $F_V$ is contained in $F_\eta^h$, and every element of $F_\eta^h$ is of the form $a/b$ with $a,b$ in some common $F_V$.  So $F_\eta^h$ is the direct limit of the fields $F_V$.

Viewing $\eta$ as a prime ideal of $\wh R_V$, 
the fields $F_V$ each have a discrete valuation with respect to $\eta$, and these are compatible.  It follows that
$F_\eta^h$ is a discretely valued field with respect to the $\eta$-adic valuation.  We wish to show that the valuation ring of $F_\eta^h$ is $R^h_\eta$, with residue field $k(U)$. 
Note that the $t$-adic and $\eta$-adic metrics on $\wh R_V$ are equivalent,
since $\sqrt{(t)}=\eta$.

Since $\wh R_V$ is contained in the $\eta$-adic valuation ring of $F_V$, it follows that $R^h_\eta$ is contained in the valuation ring of $F_\eta^h$.  
To verify the reverse containment, consider a non-zero element $\alpha \in F_\eta^h$ with non-negative $\eta$-adic valuation.  
Thus $\alpha \in F_V^\times$ for some $V$; and so 
$\alpha = a/b$ with $a,b \in \wh R_V$ non-zero and $v_\eta(a) \ge v_\eta(b)$.
Since $\wh R_V$ is a Krull domain, the element
$b \in \wh R_V$ has a well defined divisor, which is a finite linear combination of prime divisors; and other than 
the irreducible closed fiber $V$ of $\Spec(\wh R_V)$, 
each of them has a locus that meets this closed fiber at only finitely many points.  
After shrinking $V$ by deleting these points, we may assume that $b$ is invertible in $\wh R_V[t^{-1}]$.  But also $v_\eta(a/b) \ge 0$;
and thus $a/b$ has no poles on $\Spec(\wh R_V)$.
So the element $\alpha = a/b \in F_V$ actually lies in $\wh R_V$, and hence in $R_\eta^h$ as desired.  
Thus 
$R^h_\eta$ is indeed the valuation ring of  $F_\eta^h$.  
Since the valuations on the rings $R_V$ are compatible and induce that of $R^h_\eta$, the maximal ideal $\eta R^h_\eta$ of $R^h_\eta$ is the direct limit of the prime ideals $\eta \wh R_V$ of the rings $\wh R_V$.
But $\wh R_V/\eta \wh R_V = k(V) = k(U)$ for all $V$.  
So the residue field of $R^h_\eta$ is $k(U)$.

It remains to show that $R_\eta^h$ is Henselian.  Let $S$ be a commutative \'etale algebra over~$R_\eta^h$, together with a section $\sigma: \eta \to \Spec(S)$ of $\pi:\Spec(S) \to \Spec(R_\eta^h)$ over the point $\eta$.  
To show that $R_\eta^h$ is Henselian,
we will check that $\sigma$ may be extended to a section over all of
$\Spec(R_\eta^h)$.
Now since $S$ is a finitely generated $R_\eta^h$-algebra, 
it is induced by an \'etale $\wh R_V$-algebra $S_V$ for some $V$,  together with a morphism $\pi_V:\Spec(S_V) \to \Spec(\wh R_V)$ that induces $\pi$ and 
has a section $\sigma_V^0:\eta \to \Spec(S_V)$ over the generic point $\eta$ of the closed fiber of $\Spec(\wh R_V)$.  
Here $\sigma_V^0$ defines a rational section over $V$, and hence a section over a non-empty affine open subset of $V$.  
So after shrinking $V$, we may assume that $\sigma_V^0$ is induced by a section $\sigma_V:V \to \Spec(S_V)$.  
But the ring $\wh R_V$ is $t$-adically complete; so by a version of Hensel's Lemma (Lemma~4.5 of \cite{HHK}) the section $\sigma_V$ over $V$ extends to a section of $\pi_V$, over all of $\Spec(\wh R_V)$.  
This in turn induces a section of $\pi$ over $\Spec(R_\eta^h)$ that extends $\sigma$, thereby showing that $R_\eta^h$ is Henselian.
\end{proof}

\begin{prop} \label{big patch local}
Let $\eta$ be the generic point of an irreducible component $X_0$ of $X$, and let $U$ be a non-empty affine open subset of $X_0$ that does not meet any other irreducible component of $X$.  Let $A$ be a smooth commutative group 
scheme over $F$.
Suppose $\alpha \in H^n(F_U, A)$ satisfies $\alpha_{F_\eta} = 0$. Then there is a Zariski open neighborhood $V$ of $\eta$ in $U$ such that $\alpha_{F_V} = 0$.
\end{prop}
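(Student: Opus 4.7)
The plan is to exploit the intermediate field $F_\eta^h \subseteq F_\eta$ introduced in Lemma~\ref{hens ring}. Since $F_\eta^h$ is the filtered direct limit of the fields $F_V$ as $V$ ranges over the non-empty open subsets of $U$, and each such $F_V$ contains $F_U$, Lemma~\ref{coho limit} will reduce the problem to showing $\alpha_{F_\eta^h} = 0$. The bulk of the work therefore lies in descending the hypothesis $\alpha_{F_\eta} = 0$ from the completion $F_\eta$ down to the Henselization $F_\eta^h$.

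To carry out this descent, I would argue that the restriction map $H^n(F_\eta^h, A) \to H^n(F_\eta, A)$ is injective. By Lemma~\ref{hens ring}, $R_\eta^h$ is a Henselian discrete valuation ring whose $t$-adic completion is $\wh R_\eta$, with both rings sharing uniformizer $t$ and residue field $k(\eta)$. A version of Artin approximation---applicable since $R_\eta^h$ is excellent, being the Henselization of the excellent DVR $(\wh R_V)_{(t)}$---then identifies the absolute Galois groups of $F_\eta^h$ and $F_\eta$: every finite separable extension of $F_\eta$ descends uniquely to a finite separable extension of $F_\eta^h$. Combined with the smoothness of $A$ (so that a splitting cochain for $\alpha_{F_\eta}$ defined over a finite Galois extension of $F_\eta$ can be approximated by, and hence descends to, a splitting cochain over the corresponding extension of $F_\eta^h$), this yields the desired injectivity, and consequently $\alpha_{F_\eta^h}=0$.

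Applying Lemma~\ref{coho limit} with $E = F_U$, with the $E_i$ being the fields $F_V$ for $V$ a non-empty open subset of $U$, and with $E_0 = F_\eta^h$, we then conclude that $\alpha_{F_V} = 0$ for some such $V$, which is the desired Zariski open neighborhood of $\eta$. The main obstacle in this approach is the Artin-approximation input needed to bridge the Henselization and its completion; a possible alternative would be to first reduce to torsion coefficients via Lemma~\ref{torsion coho surjects}, which puts the comparison in the more standard framework of invariance of étale cohomology of constructible sheaves under completion of excellent Henselian local rings.
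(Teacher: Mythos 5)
Your overall strategy matches the paper's: reduce the claim to showing $\alpha_{F_\eta^h}=0$, argue this by Artin approximation, and then finish with Lemma~\ref{coho limit} using the second part of Lemma~\ref{hens ring}. But there is a genuine gap in the middle step. You apply Artin approximation to $R_\eta^h$, justifying this by the claim that $R_\eta^h$ is ``the Henselization of the excellent DVR $(\wh R_V)_{(t)}$.'' That identification is false. A Henselization is an algebraic extension, whereas $R_\eta^h=\varinjlim_V \wh R_V$ is a transcendental extension of each $(\wh R_V)_\eta$: passing from $\wh R_U$ to $\wh R_V$ involves a $t$-adic completion, which introduces new transcendental power series (for instance power series with gap conditions that cannot be algebraic over the smaller ring). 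So $R_\eta^h$ is not the Henselization of any of the rings $(\wh R_V)_\eta$, and its excellence (and hence applicability of Artin approximation to it directly) is not established by your argument.

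The paper closes exactly this gap by introducing an intermediate ring you skip: it sets $R_U^h := \bigl((\wh R_U)_\eta\bigr)^h$, the Henselization of the localization of $\wh R_U$ at $\eta$. Here $\wh R_U$ is excellent (by Valabrega \cite{Val} together with regularity), hence so is its localization, and hence so is its Henselization $R_U^h$; moreover the completion of $R_U^h$ is $\wh R_\eta$, and $R_U^h \subseteq R_\eta^h$. Artin approximation is applied to this $R_U^h$ (not to $R_\eta^h$), yielding $\alpha_{F_U^h}=0$; one then deduces $\alpha_{F_\eta^h}=0$ from the inclusion $F_U^h \subseteq F_\eta^h$, and concludes via Lemma~\ref{coho limit} as you do. If you insert this intermediate field $F_U^h$ between $F_U$ and $F_\eta^h$ and apply the approximation there, your argument becomes correct and coincides with the paper's. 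One further minor caution: your higher-level framing in terms of an isomorphism of absolute Galois groups would only immediately control the cohomology for finite coefficient groups, whereas the proposition allows any smooth commutative $A$ (including $\Gm$); the cochain-approximation version you mention parenthetically is the one that actually handles the general case, and is what the paper uses.
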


\begin{proof}
The ring $\wh R_U$ is excellent, by \cite[Corollary 5]{Val} and regularity; hence so is its localization $(\wh R_U)_\eta$ at $\eta$.  The henselization $R_U^h$ 
of $(\wh R_U)_\eta$ contains $\wh R_U$, and its completion is $\wh R_\eta$; and it is minimal for these properties among henselian 
discrete valuation 
rings.  So $R_U^h$ 
is contained in $R_\eta^h$, and its fraction field $F_U^h$ is contained in $F_\eta^h$.
Let $c \in Z^n(F_U,A)$ represent the class $\alpha$.  
Since $\alpha_{F_\eta} = 0$, there is a finite Galois extension $L/F_\eta$ such that $c_{F_\eta}$ is the coboundary of a cochain in $C^{n-1}(L/F_\eta,A(L))$.  
This can be expressed by finitely many polynomial equations.  
By excellence, Artin Approximation (\cite[Theorem~1.10]{Artin Approx}) applies to $R_U^h$; and it follows that $c_{F_U^h}$ is the coboundary of an element of $C^{n-1}(F_U^h,A)$.  Thus $\alpha_{F_U^h}=0$ and hence $\alpha_{F_\eta^h}=0$.
The conclusion now follows from
Lemma~\ref{coho limit}, since $F^h_\eta$ is the filtered direct limit of the fields $F_V$, by the second part of Lemma~\ref{hens ring}.
\end{proof}

\subsubsection{Local-global principles with respect to points}

We now obtain a local-global principle in terms of points on the closed fiber $X$.

\begin{thm} \label{shapoints}
Let $A$ be a commutative linear algebraic group over $F$ and let $n > 1$.  Assume that either
\renewcommand{\theenumi}{\roman{enumi}}
\renewcommand{\labelenumi}{(\roman{enumi})}
\begin{enumerate}
\item \label{sha pts cyclic}
$A = \Z/ m \Z (r)$, where $m$ is an integer not divisible by $\cha(k)$, and where either $r=n-1$ or else $[F(\bmu_m):F]$ is prime to $m$; or
\item \label{sha pts char 0}
$A = \Gm$, $\cha(k)=0$, and $K$ contains a primitive $m$-th root of
unity for all $m \ge 1$.
\end{enumerate}
Then  
the
natural map
\[H^n(F, A) \to \prod_{P \in \cX} H^n(F_P, A)\]
is injective, where $P$ ranges through all the points of
the closed fiber.
\end{thm}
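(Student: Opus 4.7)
The plan is to reduce to the patch local-global principles already established, namely Theorem~\ref{patch local-global} (for case~(\ref{sha pts cyclic}) when $r=n-1$), Corollary~\ref{MV splits cyclic} (for case~(\ref{sha pts cyclic}) under the weaker hypothesis that $[F(\bmu_m):F]$ is prime to $m$), and Corollary~\ref{MV splits tori} (for case~(\ref{sha pts char 0})). Each of these asserts injectivity of
\[H^n(F,A)\to\prod_{P\in\P}H^n(F_P,A)\times\prod_{U\in\UU}H^n(F_U,A)\]
for some choice of closed points $\P\subset\cX$ containing all crossings of irreducible components. So it will suffice to choose $\P$ in such a way that a given $\alpha\in H^n(F,A)$ with $\alpha_{F_P}=0$ for \emph{every} $P\in\cX$ also has $\alpha_{F_U}=0$ for each $U\in\UU$.

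To arrange this, I would start with any finite set $\P_0$ of closed points containing all the crossings of irreducible components of $\cX$, and let $U_1,\ldots,U_s$ denote the components of $\cX\smallsetminus\P_0$; these are affine open subsets of the irreducible components $X_1,\ldots,X_s$ of $\cX$. Let $\eta_i$ be the generic point of $X_i$. By the hypothesis on $\alpha$, we have $\alpha_{F_{\eta_i}}=0$ for each $i$. Proposition~\ref{big patch local} then produces, for each $i$, a Zariski open $V_i\subseteq U_i$ containing $\eta_i$ with $\alpha_{F_{V_i}}=0$. I would then enlarge $\P_0$ to $\P:=\P_0\cup\bigcup_i(U_i\smallsetminus V_i)$, which is still a finite set of closed points containing every component crossing, and for which the associated collection $\UU$ is precisely $\{V_1,\ldots,V_s\}$. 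By construction $\alpha_{F_U}=0$ for every $U\in\UU$, while $\alpha_{F_P}=0$ for every $P\in\P$ is part of the hypothesis. Invoking the relevant patch local-global principle then yields $\alpha=0$.

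The main obstacle is the step upgrading vanishing at each generic point $\eta_i$ to vanishing on an open neighborhood $V_i$, since the complete discretely valued field $F_{\eta_i}$, with residue field $k(X_i)$, can be much larger than any $F_V$. This is precisely what Proposition~\ref{big patch local} accomplishes, via Artin approximation over the excellent henselian ring $R_{U_i}^h$ and the identification of $F_{\eta_i}^h$ as the filtered direct limit of the fields $F_V$ (Lemma~\ref{hens ring}). Once that transition is in hand, the remainder of the argument is a short bookkeeping reduction to the patch-level injectivity statements of the previous section.
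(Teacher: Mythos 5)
Your proof is correct and takes essentially the same approach as the paper: both reduce to the patch-level injectivity statements (Theorem~\ref{patch local-global}, Corollary~\ref{MV splits cyclic}, Corollary~\ref{MV splits tori}) after using Proposition~\ref{big patch local} to upgrade vanishing at each generic point $\eta_i$ to vanishing on a Zariski open $V_i$, and then choosing $\P$ to be the complement of $\bigcup_i V_i$. The only cosmetic difference is that you introduce an intermediate set $\P_0$ before shrinking, whereas the paper chooses the opens $U_i$ directly.
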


\begin{proof}
Let $\alpha \in H^n(F,A)$ be an element of the above kernel.  
Consider the irreducible components $X_i$ of $X$, and their generic points $\eta_i \in X_i \subseteq X$.  Thus $\alpha_{F_{\eta_i}} = 0$ for each $i$ (taking $P = \eta_i$).
By Proposition~\ref{big patch local}, we may choose a non-empty Zariski affine open subset $U_i \subset X_i$, not meeting any other component of $X$, such that 
$\alpha_{F_{U_i}}$ is trivial.  Let $\UU$ be the collection of these sets $U_i$, and let $\P$ be the complement in $X$ of the union of the sets $U_i$.  Then $\alpha$ is in the kernel of the map on $H^n(F,A)$ in Theorem~\ref{patch local-global}, Corollary~\ref{MV splits cyclic}, or Corollary~\ref{MV splits tori} respectively.  Since that map is injective, it follows that $\alpha=0$.
\end{proof}

\subsection{Local-global principles with respect to discrete
valuations} \label{curve dvr}

Using the previous results, we now investigate how to translate our
results into local-global principles involving discrete valuations on
our field $F$, and in particular those valuations arising from
codimension one points on our model $\mX$ of $F$.  Our main result here is
Theorem~\ref{sha_vanish}, which parallels
Theorem~\ref{shapoints}(\ref{sha pts cyclic}), and asserts the
vanishing of the obstruction $\Sha^n(F,A)$ to such a local-global
principle, for $n>1$ and $A$ an appropriate twist of $\Z/m\Z$.  

In the case $n=1$, a related result appeared
at~\cite[Corollary~8.11]{HHK:H1}, but with different hypotheses and
for different groups.  In fact, for a constant finite group $A$, the
obstruction $\Sha^1(F,A)$ is non-trivial unless the reduction graph
$\Gamma$ of a regular model $\mX$ of $F$ is a tree; see \cite{HHK:H1},
Proposition~8.4 and Corollary~6.5.  (As in \cite{HHK:H1}, ``discrete valuations'' are required to have value group isomorphic to $\mbb Z$, and in particular to be non-trivial.)

For the remainder of this section we make the {\em standing assumption
that $\wh X$ is regular}.  

\begin{lem} \label{branch extension}
Let $P$ be a point of $X$ and let $v$ be a discrete valuation on $F_P$.  Then the restriction $v_0$ of $v$ to $F$ is a discrete valuation on $F$.  Moreover if $v$ is induced by a codimension one point of $\Spec(\wh R_P)$ (or equivalently, a height one prime of $\wh R_P$), then $v_0$ is induced by a codimension one point of $\wh X$ whose closure contains $P$.
\end{lem}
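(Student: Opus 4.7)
My plan is to prove the ``moreover'' clause first, since it carries the geometric content; the general first statement then follows by applying the same argument to a height-one prime sitting inside the center of $v$. The essential tool throughout is Weierstrass preparation in the two-dimensional regular complete local ring $\wh R_P$.

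Suppose $v$ is induced by a height-one prime $\mf q$ of $\wh R_P$, so $\mc O_v = (\wh R_P)_{\mf q}$. The crux is to show that $\mf p := \mf q \cap R_P$ is a non-zero prime of $R_P$. I would fix a regular system of parameters $(t, y)$ of $\wh R_P$, with $t$ the uniformizer of $T$ (already lying in $R_P$) and $y$ chosen so as to extend $t$ to a regular system, possibly after an \'etale extension of $R_P$ to realize $y$ and the residue field $k(P)$ inside $R_P$. By Weierstrass preparation, a generator of $\mf q$ is, up to a unit in $\wh R_P$, either $t$ itself or a distinguished polynomial in $y$ with coefficients in $k(P)[[t]]$, which lies in $R_P[y] \subseteq R_P$; in either case $\mf q \cap R_P \ne (0)$. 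Faithful flatness of $R_P \hookrightarrow \wh R_P$ together with going-down then yield $\operatorname{ht}(\mf p) \le \operatorname{ht}(\mf q) = 1$.

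Since $R_P$ is normal (because $\wh X$ is regular) and $\mf p$ has height one, $(R_P)_{\mf p}$ is a DVR in $F$; its inclusion into $(\wh R_P)_{\mf q} \cap F = \mc O_v \cap F$ must be an equality, since valuation rings of $F$ are maximal among local subrings having $F$ as their fraction field. Hence $v_0 = v|_F$ is precisely the discrete valuation on $F$ arising from $\mf p$, i.e., from the codimension-one point $\eta := V(\mf p) \in \Spec R_P \subseteq \wh X$; and since $\mf p \subseteq \mf m_P$, the closure of $\eta$ in $\wh X$ contains $P$.

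For the general first statement, any non-trivial discrete valuation $v$ on $F_P$ has its valuation ring containing $\wh R_P$ (a standard fact for complete Noetherian local rings), so its center $\mf m_v \cap \wh R_P$ is a non-zero prime of $\wh R_P$. Applying the argument of the previous paragraph to any height-one prime $\mf q_0$ sitting inside this center produces a non-zero element of $R_P$ on which $v$ takes a positive value; hence $v|_F$ is non-trivial, and since its value group is a non-zero subgroup of $\Z$, it is a discrete valuation on $F$ after rescaling. The main obstacle is the Weierstrass preparation step: the distinguished polynomial naturally has coefficients in $k(P)[[t]]$, which need not lie in $R_P$ when $k(P) \ne k$, and it is the careful choice of $y$ (or the passage to an \'etale extension) that bridges the formal ring $\wh R_P$ with its algebraic subring $R_P$.
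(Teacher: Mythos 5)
Your approach inverts the logical order of the paper's proof. The paper cites \cite[Proposition~7.5]{HHK:H1} for the first assertion and then derives the ``moreover'' clause from it in two lines: once one observes $\wh R_P \subseteq \mc O_v$ (tautological when $v$ is the $\mf q$-adic valuation), it follows that $R_P \subseteq \mc O_{v_0}$; the center of $v_0$ on $\Spec R_P$ sits under the height-one prime $\mf q$ and so has height $\le 1$, and it is nonzero by the first assertion, hence of height one. You instead try to prove the ``moreover'' clause unconditionally, via Weierstrass preparation, and then deduce the first assertion from it.

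There are gaps in the Weierstrass route as written. First, you posit a \emph{regular} system of parameters $(t,y)$ of $\wh R_P$, which requires $t \notin \mf m_P^2$; but the closed fiber $X$ of a regular $T$-curve can well have components of multiplicity $\ge 2$, and at a point $P$ on such a component one has $t \in \mf m_P^2$. One can repair this by taking $(t,y)$ to be only a system of parameters, so that $\wh R_P$ is module-finite over $k(P)[[t,y]]$, and running Weierstrass in the subring after intersecting $\mf q$ with it; but that is not the argument you wrote. Second, you correctly flag that the distinguished polynomial has coefficients in $k(P)[[t]] \not\subseteq R_P$ when $k(P) \ne k$, but ``careful choice of $y$'' cannot remove this (the obstacle is that $R_P$ is not complete); the \'etale-extension idea can be made to work, using that $k(P)[[t]]/T$ is finite \'etale and descending along the resulting finite extension $F'/F$, but as written it is a gesture rather than a proof. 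Finally, for the general first assertion you invoke as a ``standard fact'' that $\wh R_P \subseteq \mc O_v$ for \emph{every} discrete valuation $v$ on $F_P$. This is far less evident than in the height-one-prime case (where it is immediate), and is in essence the substance of the cited \cite[Proposition~7.5]{HHK:H1} that you are attempting to re-derive; you should supply a proof or reference for it rather than label it standard. Given that the earlier reference is available in this paper, the authors' route of simply citing it is the economical one.
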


\begin{proof}
The first assertion is given at \cite[Proposition~7.5]{HHK:H1}.  For the second assertion, if $v$ is induced by a height one prime of $\wh R_P$, then $\wh R_P$ is contained in the valuation ring of $v$.  Hence so is the local ring $R_P$, which is then also contained in the valuation ring of $v_0$.  Thus $v_0$ is induced by a codimension one point of $\Spec(R_P)$, and so by a codimension one point of $\wh X$ whose closure contains $P$.
\end{proof}

Given a field $E$, let $\Omega_E$ denote the set of discrete valuations on $E$.  For $v \in \Omega_E$, write $E_v$ for the $v$-adic completion of $E$.
If $A$ is a commutative group scheme over $E$, let
\[\Sha^n(E,A) = \ker\biggl(H^n(E,A) \to
\prod_{v \in \Omega_E} H^n(E_v, A)\biggr).\]

Similarly, given a normal integral scheme $Z$ with function field $E$, let $\Omega_Z \subseteq \Omega_E$ denote the subset consisting of the discrete valuations on $E$ that correspond to codimension one points on $Z$.  If $A$ is as above, let   
\[\Sha^n_Z(E,A) = \ker\biggl(H^n(E,A) \to
\prod_{v \in \Omega_Z} H^n(E_v, A)\biggr).\]
Here if $Z = \Spec(R)$, we also write $\Sha^n_R(E,A)$ for $\Sha^n_Z(E,A)$.

We will be especially interested in the case that $E=F$, the function
field of a regular projective curve $\mX$ over our complete discrete valuation ring $T$; and where $Z$ is either $\mX$ or $\Spec(\wh R_P)$ for some closed point $P \in \mX$.  

In the case that $Z=\mX$, with closed fiber $X$ and function field $F$ as before, there is a related group
\[\Sha_{\mX,X}^n(F,A) = \ker\biggl(H^n(F,A) \to
\prod_{P \in X} H^n(F_P, A)\biggr).\]
Note that $\Sha_{\mX,X}^n(F,A)$ is contained in $\Sha_\mX^n(F,A)$ by 
\cite[Proposition~7.4]{HHK:H1}, which asserts that every field of the form $F_v$ contains a field of the form $F_P$.  In the above notation, Theorem~\ref{shapoints} asserts that $\Sha_{\mX,X}^n(F,A) = 0$ if condition (\ref{sha pts cyclic}) or (\ref{sha pts char 0}) of that result is satisfied.

\subsubsection{Relating local-global obstruction on a regular model to
obstructions at closed points}

A key step in relating our patches to discrete valuations is the
following result, which parallels Proposition~8.4 of \cite{HHK:H1}.  That result considered only the case $n=1$, but did not require the linear algebraic group to be commutative (since $H^1$ is defined even for non-commutative groups).

Here $X_{(0)}$ denotes the set of closed points of $X$, and $\prod'$ denotes the restricted product, i.e.\ the subgroup of the product consisting of elements in which all but finitely many entries are trivial.

\begin{prop} \label{sha ses}
Let $A$ be a linear algebraic group over $F$.
\renewcommand{\theenumi}{\alph{enumi}}
\renewcommand{\labelenumi}{(\alph{enumi})}
\begin{enumerate}
\item \label{left exact sha seq}
The natural map 
\(H^n(F, A) \to \prod_{P \in \cX} H^n(F_P, A)\) induces an exact sequence 
\[0 \to \Sha_{\mX,X}^n(F,A) \xrightarrow{\iota} \Sha_\mX^n(F,A)  \xrightarrow{\phi}  \prod_{P \in X_{(0)}}{\!\!\!\!'} \  \Sha^n_{\wh R_P}(F_P,A).\]
\item \label{exact sha seq}
If $A$ is finite and of order not divisible by $\cha(k)$, or if $\cha(k)=0$, the exact sequence extends to
\[0 \to \Sha_{\mX,X}^n(F,A) \xrightarrow{\iota} \Sha_\mX^n(F,A)  \xrightarrow{\phi}  \prod_{P \in X_{(0)}}{\!\!\!\!'} \  \Sha^n_{\wh R_P}(F_P,A) \to 0. \]
\item \label{shasha}
If $n>1$ and $A$ satisfies hypothesis~(\ref{sha pts cyclic}) or~(\ref{sha pts char 0})
of Theorem~\ref{shapoints}, then $\phi$ is an isomorphism.
\end{enumerate}
\end{prop}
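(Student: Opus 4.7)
My plan is to handle (a) by combining Lemma~\ref{branch extension} with a spreading-out argument, (b) by applying the Mayer-Vietoris sequence of Theorem~\ref{meyer-veitoris for our field} to a carefully chosen collection of patches, and (c) as a formal consequence of (a), (b), and Theorem~\ref{shapoints}.  For (a), the map $\iota$ is the inclusion coming from the containment $\Sha^n_{\mX,X}(F,A) \subseteq \Sha^n_\mX(F,A)$ noted via \cite[Proposition~7.4]{HHK:H1}, and $\phi$ is the product of the restrictions $\alpha \mapsto \alpha_{F_P}$.  Well-definedness of $\phi$ as a map into $\prod_P \Sha^n_{\wh R_P}(F_P, A)$ follows from Lemma~\ref{branch extension}: each height-one prime of $\wh R_P$ restricts to a codimension-one point $v_0$ of $\mX$, so if $\alpha_{F_{v_0}} = 0$ then $\alpha_{(F_P)_v} = 0$.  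To see that $\phi(\alpha)$ lies in the \emph{restricted} product I argue by spreading out: since $\alpha \in \Sha^n_\mX$ forces $\alpha_{F_{\eta_i}} = 0$ at each generic point $\eta_i$ of each irreducible component of $X$ (as $\eta_i \in \Omega_\mX$), Proposition~\ref{big patch local} produces an open $U_i \subset X_i$ with $\alpha_{F_{U_i}} = 0$; a spreading-out / Hensel-type argument (cf.\ \cite[Lemma~4.5]{HHK}) then forces $\alpha_{F_P} = 0$ for every closed $P \in U_i$, so only the finitely many closed points in $X \smallsetminus \bigcup_i U_i$ can give a nontrivial entry.  Exactness at $\Sha^n_\mX(F,A)$ is then immediate, since every point of $X$ is either closed or the generic point of an irreducible component.

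For (b), given $(\beta_P) \in \prod'_P \Sha^n_{\wh R_P}(F_P, A)$ with finite support $\P_0$, I choose $\P \supseteq \P_0$ also containing all intersection points of irreducible components of $X$, and let $\UU$ be the components of $X \smallsetminus \P$.  Since each $\beta_P$ becomes trivial in $F_\wp$ for every branch $\wp$ at $P$ (by definition of $\Sha^n_{\wh R_P}$), the data $\{\beta_P\}_{P \in \P} \sqcup \{0\}_{U \in \UU}$ lies in the kernel of the middle differential in the Mayer-Vietoris sequence from Theorem~\ref{meyer-veitoris for our field}.  By the surjectivity afforded by Theorem~\ref{patch local-global} (and the splittings in Corollaries~\ref{MV splits cyclic} and~\ref{MV splits tori}), this datum lifts to a class $\alpha \in H^n(F, A)$ with $\alpha_{F_P} = \beta_P$ and $\alpha_{F_U} = 0$.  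It then remains to verify $\alpha \in \Sha^n_\mX(F, A)$.  For $v \in \Omega_\mX$ the generic point of an irreducible component of $X$, $F_v$ contains $F_U$ for the appropriate $U \in \UU$, so $\alpha_{F_v} = 0$ is immediate.  The hard step, and the main obstacle of the proof, is the horizontal case: for $v$ arising from a codimension-one subscheme $V \subset \mX$ not contained in $X$, one enlarges $\P$ to include the finitely many closed points of $V \cap X$ and uses that each $\beta_{Q}$ with $Q \in V \cap X$ is trivial on $F_\wp$ for every branch $\wp$ of $V$ at $Q$, together with the structural relation between $F_v$ and the $F_\wp$'s, to conclude $\alpha_{F_v} = 0$.

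Part (c) is then immediate: under the hypotheses of Theorem~\ref{shapoints}, $\Sha^n_{\mX, X}(F, A)$ vanishes for $n > 1$, so the exact sequence of (a) shows $\phi$ is injective, and combined with the surjectivity from (b), $\phi$ is an isomorphism.
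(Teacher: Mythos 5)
Your overall strategy matches the paper's: part (a) via Lemma~\ref{branch extension} and Proposition~\ref{big patch local}, part (b) via the Mayer-Vietoris sequence from Theorem~\ref{meyer-veitoris for our field}, and part (c) via Theorem~\ref{shapoints}. There is, however, a genuine gap in your treatment of the ``horizontal case'' in part (b). You propose to enlarge $\P$ to include the closed points of $V \cap X$ for a given horizontal divisor $V$; but $\alpha$ has already been constructed from a \emph{fixed} finite $\mc P$, so you cannot re-choose $\mc P$ afterwards without changing $\alpha$, and you certainly cannot include in a finite $\mc P$ the points of $V \cap X$ for all of the infinitely many horizontal prime divisors $V$ at once. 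The correct route, as in the paper, keeps $\mc P$ fixed: by \cite[Proposition~7.4]{HHK:H1}, $F_v$ contains $F_P$ for some point $P$ of $X$, and one checks that $\alpha_P \in \Sha^n_{\wh R_P}(F_P,A)$ for \emph{every} closed $P$ --- equal to $\beta_P$ when $P \in \mc P$, and equal to $0$ when $P \notin \mc P$ because $P \in U$ for some $U \in \mc U$, $\alpha_U = 0$, and $F_U \subset F_P$. Triviality of $\alpha$ over $F_v$ then follows by passing through $(F_P)_{v_P}$, with no enlargement of $\mc P$ needed. You should repair (b) along these lines before the ``remains to verify'' step can be completed.

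A further minor point on part (a): your ``spreading-out / Hensel-type argument'' to get $\alpha_{F_P} = 0$ for closed $P \in U_i$ from $\alpha_{F_{U_i}} = 0$ is unnecessary overhead. The containment $F_{U_i} \subset F_P$ for $P \in U_i$ yields this immediately by functoriality, which is all the paper uses.
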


\begin{proof}
It follows from Lemma~\ref{branch extension} that for each $P \in X_{(0)}$, the image of
$\Sha_\mX^n(F,A)$ under $H^n(F, A) \to H^n(F_P,A)$ lies in $\Sha^n_{\wh R_P}(F_P,A)$.  Thus we obtain a group homomorphism 
$\Sha_\mX^n(F,A) \to \prod_{P \in X_{(0)}} \Sha^n_{\wh R_P}(F_P,A)$.  To obtain the map $\phi$, we 
wish to show that the image is contained in the restricted product.  
If $\alpha \in \Sha_\mX^n(F,A) \subseteq H^n(F,A)$
and $\eta$ is the generic point of an irreducible component of $X$,
then the image of 
$\alpha$ in $H^n(F_\eta,A)$ is trivial, since $\eta$ is a 
codimension one point on $\wh X$.  
By Proposition~\ref{big patch local}, 
$\alpha$ has trivial image in $H^n(F_U,A)$
for some Zariski open neighborhood $U$ of $\eta$, 
and hence in $H^n(F_P,A)$ for each $P \in U$.  
The union of these sets $U$, as $\eta$ varies, contains all but finitely many closed points of $X$.  So indeed the image of $\alpha$ lies in the restricted product.

The composition $\phi \iota$ is trivial by definition of $\Sha_{\mX,X}^n(F,A)$.
To complete the proof of part~(\ref{left exact sha seq}), 
let $\alpha \in \Sha_\mX^n(F,A)$
be any element in the kernel of this map.  Then the image of $\alpha$ in $\Sha^n_{\wh R_P}(F_P,A) \subseteq H^n(F_P,A)$
is trivial for every closed point $P$ on the closed fiber $X$.  
Meanwhile, for any non-closed point $\eta$ of $X$ (viz.\ the generic point of an irreducible component of $X$), 
the image of $\alpha$ in $H^n(F_\eta, A)$
is also trivial, by the definition of $\Sha^n$, 
since $\eta$ is a codimension one point of $\wh X$.
Hence $\alpha$ lies in $\Sha_{\mX,X}^n(F,A) \subseteq \Sha_\mX^n(F,A)$, as required.

To prove part~(\ref{exact sha seq}), i.e.\
that $\phi$ is surjective, take an element $(\alpha_P)_{P \in X_{(0)}}
$ in the above restricted product.  
Thus $\alpha_P =0$ for all $P \in X_{(0)}$ outside of some finite set $\mc P$ that can be chosen to include the points where distinct components of $X$ meet.  Since $\alpha_P \in  \Sha^n_{\wh R_P}(F_P,A)$, its image in $H^n(F_\wp,A)$ is trivial for every branch $\wp$ of $X$ at $P$.  
Let $\mc U$ be the set of components of the complement of $\mc P$ in $X$, and let $\alpha_U = 0$ for each $U \in \mc U$. 
The hypotheses of Theorem~\ref{meyer-veitoris for our field} are satisfied in our situation, and the exact sequence there yields that the tuple $(\alpha_\xi)_{\xi \in \mc P \sqcup \mc U}$ is of the form $\Delta(\alpha)$ for some $\alpha \in H^n(F,A)$.  The image of $\alpha$ under \(H^n(F, A) \to \prod_{P \in \cX} H^n(F_P, A)\) is 
$(\alpha_P)_{P \in X_{(0)}} \in \prod'_{P \in X_{(0)}}
\Sha^n_{\wh R_P}(F_P,A)$.
To complete the proof of (\ref{exact sha seq}), we show that 
$\alpha \in \Sha_\mX^n(F,A)$, i.e.\ $\alpha_v = 0$ for each $v \in \Omega_{\wh X}$.  By \cite[Proposition~7.4]{HHK:H1}, $F_v$ contains $F_P$ for some $P \in X$.  If $P$ is a closed point, then the discrete valuation on $F_v$ restricts to a discrete valuation $v_P$ on $F_P$ (which in turn restricts to $v$ on $F$).  But $\alpha_P \in \Sha^n_{\wh R_P}(F_P,A)$, so $\alpha$ becomes trivial over $(F_P)_{v_P}$ and hence over $F_v$.  If instead $P$ is a point of codimension one, i.e.\ the generic point $\eta$ of some $U \in \mc U$, then $v=v_\eta$, and $\alpha_v=0$ because $\alpha_U = 0$ and $F_U \subset F_\eta$.

Part~(\ref{shasha}) now follows from part~(\ref{exact sha seq}) and Theorem~\ref{shapoints}, which says $\Sha_{\mX,X}^n(F,A)=0$.
\end{proof}

\subsubsection{Local-global principles at closed points}

We will use the following statement of Panin which asserts a particular case
of the analog of the Gersten conjecture in the context of the theory
of Bloch and Ogus.  Here $\kappa(z)$ denotes the residue field at a point $z$, and $Z^{(i)}$ denotes the set of points of $Z$ having codimension $i$.
As usual, $\zmod m {-r}$ denotes $\Hom(\zmod m r,\Z/m\Z)$ for $r>0$, where 
$m$ is not divisible by the characteristic of the field.  Also, for $m$ as above and for any $r \in \mbb Z$, if $A$ is an $m$-torsion group scheme then $A(r)$ denotes $A \otimes \zmod m r$. 

\begin{thm}[{\cite [Theorem~C]
{Panin:EGC}}] \label{gersten}
Suppose that $R$ is an equicharacteristic regular local ring with
fraction field $F$, and let $Z = \Spec(R)$.
Then for any positive integer $m$ that is not divisible by the characteristic, and any $m$-torsion finite \'etale commutative group scheme $A$ over $R$, the Cousin complex
\[0 \to H^n(Z, A) \to H^n(F, A)
\to \bigoplus\limits_{z \in
Z^{(1)}}H^{n-1}(\kappa(z), 
A(-1)) \to \bigoplus\limits_{z \in
Z^{(2)}}H^{n-2}(\kappa(z), 
A(-2)) \to
\cdots\]
of \'etale cohomology groups
is exact.
\end{thm}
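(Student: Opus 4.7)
The statement is cited from Panin's paper, so the task is to outline the strategy Panin employs (a variant of the Bloch--Ogus--Gabber programme for Gersten-type conjectures in étale cohomology).

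The plan is in three stages. First, I would reduce to the ``geometric'' situation where $R$ is the local ring of a smooth variety at a point. By N\'eron--Popescu desingularization, any equicharacteristic regular local ring is a filtered colimit of smooth local algebras over its prime field $k_0$. Étale cohomology (and the formation of the Cousin complex) commutes with such filtered colimits, since each term is a colimit over finite \'etale covers and over finite data. Hence it suffices to prove exactness when $R = \mc O_{X,x}^{h}$ (or even $R = \mc O_{X,x}$) for $X$ a smooth $k_0$-variety. This step crucially uses the equicharacteristic hypothesis, which is what enables the use of Popescu's theorem.

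Second, in the smooth case, I would identify the Cousin complex with the $E_1$-row of the coniveau spectral sequence. Using the Bloch--Ogus purity theorem for \'etale cohomology (i.e.\ the isomorphism $H^{j}_{z}(\mc O_{X,z}^{h},A) \iso H^{j-2c}(\kappa(z),A(-c))$ for a codimension-$c$ point $z$), one identifies the terms $\bigoplus_{z \in Z^{(i)}} H^{n-i}(\kappa(z),A(-i))$ with $E_1^{i,n-i}$ of the coniveau spectral sequence. The assertion that the Cousin complex is exact becomes the assertion that this spectral sequence degenerates in the appropriate sense and that the edge map identifies $H^n(Z,A)$ with the kernel of the first differential. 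Equivalently, one needs to show that the map $H^n(Z,A) \to H^n(F,A)$ is injective, and that an element of $H^n(F,A)$ whose residues vanish in all codimension-one points lies in the image of $H^n(Z,A)$.

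Third, the key technical input to establish injectivity and local exactness is Gabber's presentation lemma (refining Quillen's trick). After shrinking, one finds a smooth morphism $X \to S$ to a lower-dimensional smooth $k_0$-scheme together with a section $s: S \to X$, and a ``finite / quasi-finite'' geometric presentation that makes available transfer homomorphisms in \'etale cohomology. These transfers satisfy a rigidity property which reduces statements about cohomology of the local ring to statements about cohomology of its residue field, and collapses the coniveau filtration in the required way. The verification that these constructions are compatible with arbitrary twists $A(-i)$ (and not merely constant coefficients) is the heart of the matter. The main obstacle is precisely this step: producing the geometric presentation and verifying the injectivity of the edge map $H^n(Z,A) \to H^n(F,A)$ and the local exactness at the successive terms with full control over the Tate twists; this is where Panin's contribution lies, and the reduction via Popescu's theorem has to be set up carefully so that the presentation-lemma construction survives passage to the limit.
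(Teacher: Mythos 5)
The paper itself offers no proof of this statement: it is simply quoted verbatim from Panin's article, cited as \cite[Theorem~C]{Panin:EGC}. There is therefore no ``paper's own proof'' to compare your outline against, and the honest reading of your task is whether your sketch of Panin's argument is a fair one.

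At the level of broad strokes it is. The three-stage plan you describe -- reduction to essentially smooth algebras via N\'eron--Popescu desingularization (which is exactly what the equicharacteristic hypothesis enables and why Panin needs it), identification of the Cousin complex as the $E_1$-row of the coniveau spectral sequence using absolute purity, and then exactness via a Gabber-type geometric presentation lemma yielding transfers -- is indeed the architecture of Panin's proof and of the Bloch--Ogus--Gabber programme that it fits into. Two small precisions are worth recording. First, it is not that the coniveau spectral sequence ``degenerates''; what one proves is the exactness of the $E_1^{\bullet,n}$ row augmented by $H^n(Z,A)$, i.e.\ that $E_2^{p,n}=0$ for $p>0$ and $E_2^{0,n}=H^n(Z,A)$, which for a local base is the content of the effaceability statement. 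Second, you should make explicit that the passage to the filtered colimit in step one has to be verified not merely for \'etale cohomology (which commutes with filtered colimits of rings, as you say) but for the entire coniveau filtration and its associated-graded; Panin handles this by showing both the formation of the complex and its exactness pass to the colimit. With those caveats, your summary of the cited result's proof is a reasonable one, but it remains an outline of someone else's theorem rather than a proof that could be incorporated into the present manuscript -- which is also how the authors treat it.
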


\begin{prop} \label{gersten local Sha vanish}
Under the hypotheses of Theorem~\ref{gersten}, assume that $R$ is complete.  Then
$\Sha^n(F,A)=\Sha_Z^n(F,A)=0$ for $n\ge 1$.
\end{prop}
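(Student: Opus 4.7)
The plan is to establish $\Sha^n_Z(F,A) = 0$, from which $\Sha^n(F,A) = 0$ follows because $\Omega_Z \subseteq \Omega_F$ gives $\Sha^n(F,A) \subseteq \Sha^n_Z(F,A)$. Fix $\alpha \in \Sha^n_Z(F,A)$. The argument proceeds in three stages: first, pull $\alpha$ back to $H^n(Z,A)$ via Panin's Cousin complex; second, interpret the resulting class at each height one point using the Henselian DVR $\wh R_z$; and third, invoke the Cohen structure theorem to show the class vanishes.

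For the first stage, for each $z \in Z^{(1)}$ the residue map $\partial_z : H^n(F,A) \to H^{n-1}(\kappa(z), A(-1))$ in the Cousin complex of Theorem~\ref{gersten} factors through the completion $H^n(F_{v_z}, A)$, since the residue is a local construction at $z$. The assumption $\alpha_{F_{v_z}} = 0$ therefore forces $\partial_z(\alpha) = 0$ for every $z$, so by exactness I obtain a unique $\til\alpha \in H^n(Z, A)$ with image $\alpha$. For the second stage, since $A$ is \'etale over $R$ it is unramified at each $v_z$, yielding the (split) exact sequence
\[
0 \to H^n(\wh R_z, A) \to H^n(F_{v_z}, A) \to H^{n-1}(\kappa(z), A(-1)) \to 0,
\]
with $H^n(\wh R_z, A) \cong H^n(\kappa(z), A)$ by Henselianness of $\wh R_z$. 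Since the image of $\til\alpha$ in $H^n(F_{v_z}, A)$ equals $\alpha_{F_{v_z}} = 0$, its image in $H^n(\wh R_z, A) \cong H^n(\kappa(z), A)$ also vanishes for every $z \in Z^{(1)}$.

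The decisive third stage uses completeness. By the Cohen structure theorem, $R \cong k[[x_1, \ldots, x_d]]$, where $k$ is the residue field and $d = \dim R$. Taking $\mf p = (x_1) \in Z^{(1)}$, the quotient $R/\mf p \cong k[[x_2, \ldots, x_d]]$ is again equicharacteristic complete regular local with residue field $k$. The ring map $R \to \wh R_\mf p \to \kappa(\mf p)$ factors as $R \twoheadrightarrow R/\mf p \hookrightarrow \kappa(\mf p)$, so on cohomology the corresponding map decomposes as
\[
H^n(R, A) \to H^n(R/\mf p, A) \hookrightarrow H^n(\kappa(\mf p), A),
\]
where the second arrow is injective by the beginning of the Cousin complex applied to $R/\mf p$. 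The first arrow is an isomorphism, since both $H^n(R, A)$ and $H^n(R/\mf p, A)$ identify canonically with $H^n(k, A_k)$ by Henselianness, and the two retractions $R \to k$ and $R \twoheadrightarrow R/\mf p \to k$ coincide. Hence $H^n(R, A) \hookrightarrow H^n(\kappa(\mf p), A)$, and stage two forces $\til\alpha = 0$, giving $\alpha = 0$.

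The subtle points to verify will be the two compatibilities tacitly used: that Panin's residue in stage one agrees with the classical residue computed on the completion $F_{v_z}$, and that the Henselian invariance isomorphism $H^n(R,A) \cong H^n(k, A_k)$ is natural with respect to the quotient $R \twoheadrightarrow R/\mf p$ used in stage three. Both are standard facts in \'etale cohomology but are essential for the argument to close.
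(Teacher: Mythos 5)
Your proof is correct and follows essentially the same route as the paper's: you invoke Panin's Cousin complex to lift $\alpha$ to a class $\tilde\alpha \in H^n(Z,A)$, then choose a single height-one prime generated by a regular parameter, identify $H^n(R,A)$ with $H^n(R/\mathfrak{p},A)$ via rigidity for complete Henselian local rings with the same residue field, and conclude by the injectivity of $H^n(R/\mathfrak{p},A)\hookrightarrow H^n(\kappa(\mathfrak{p}),A)$, which is again Theorem~\ref{gersten} applied to the quotient ring. The only cosmetic differences are your explicit appeal to the Cohen structure theorem (the paper simply observes that $R/(\sigma)$ is equicharacteristic regular complete local with a regular system given by the residues of $\sigma_2,\dots,\sigma_d$, which is all that is needed) and your invocation of the full split Gysin sequence for $\widehat R_z$ where the paper uses only the left-exactness of the Cousin complex, i.e.\ the injection $H^n(\widehat R_\sigma,A)\hookrightarrow H^n(F_\sigma,A)$ from Theorem~\ref{gersten} itself; the two compatibilities you flag at the end are indeed the ones implicitly used and both hold by naturality.
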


\begin{proof}
Let $d$ be the Krull dimension of $R$.  The assertion is trivial if $d \le 1$, so we may assume $d \ge 2$.  Since $\Sha^n(F,A) \subseteq \Sha_Z^n(F,A)$, it suffices to show the vanishing of the latter group.

Let $\alpha \in \Sha_Z^n(F,A) \subseteq H^n(F,A)$.
Consider the exact sequence in Theorem~\ref{gersten}.  For each $z \in Z^{(1)}$, 
the ramification map
$H^n(F, A)
\to H^{n-1}(\kappa(z), 
A(-1))$
factors through the map to the completion $H^n(F_z,A)$.  But the image of $\alpha$ in $H^n(F_z,A)$ vanishes, since $\alpha \in \Sha_Z^n(F,A)$.  Hence $\alpha$ maps to zero in $\bigoplus_{z \in
Z^{(1)}}H^{n-1}(\kappa(z), 
A(-1))$, and thus it
is induced by a class $\til \alpha
\in H^n(Z, A)$.

Let $k'$ be the residue field of $R$ at its maximal ideal (corresponding to the closed point of $Z$).  Let $\sigma_1,\dots,\sigma_d$ be a regular system
of parameters in $R$.  Write 
$\sigma=\sigma_1$ and write $\wh R_\sigma$ for the 
completion of the local ring of $R$ at the prime ideal $(\sigma)$.
Thus $\wh R_\sigma$ is a complete discrete valuation ring with uniformizer $\sigma$; 
let $F_\sigma$ and $\kappa(\sigma)$ denote its fraction field and residue field, respectively.
Here $\kappa(\sigma)$ is the fraction field of
$\mc O_{\kappa(\sigma)} := R/(\sigma)$, an equicharacteristic 
regular complete local ring of dimension $d-1$, 
such that the residues $\bar \sigma_2,\dots,\bar \sigma_d$ of 
$\sigma_2,\dots,\sigma_d$ form a regular system of parameters.
By Theorem~\ref{gersten}, 
the natural maps 
\[H^n(\wh R_\sigma, A) \to H^n(F_\sigma, A) \ \ \ {\rm and} \ \ \ \ H^n(\mc O_{\kappa(\sigma)}, A) \to H^n(\kappa(\sigma), A)\]
are injections.  The complete local rings $R$ and
$\mc O_{\kappa(\sigma)}$ each have
residue field $k'$, so by 
\cite[Theorem~III.4.9]{Artin}
we may identify
\[H^n(Z, A) = H^n(k', A) = H^n(\mc O_{\kappa(\sigma)}, A),\]
via restriction to the closed point.
The natural map $H^n(Z,A) \to H^n(\mc O_{\kappa(\sigma)}, A)$ is thus an isomorphism.
We have the following commutative diagram:
\[\xymatrix{
H^n(\mc O_{\kappa(\sigma)}, A) \ar@{^(->}[r] & H^n(\kappa(\sigma), A) \\
{\!\!\! \!\!\! \tilde \alpha \in H^n(Z, A) \ } \ar[u]^{\wr} \ar[r] \ar@{^(->}[d] & 
{\ H^n(\wh R_\sigma, A) \ } \ar[u] \ar@{^(->}[d] \\
{\!\!\! \!\!\! \alpha \in H^n(F, A) \ } \ar[r] & 
{\ H^n(F_\sigma,A) \ }
}\]
Since $\sigma$ defines a codimension one point of $Z$, the image
of $\alpha \in \Sha_Z^n(F,A) \subseteq H^n(F, A)$ in $H^n(F_\sigma,A)$ is trivial.  Since $\tilde \alpha$ maps to $\alpha$, a diagram chase then shows that $\tilde \alpha$ is trivial and hence so is $\alpha$.
\end{proof}

In our situation, with $R = \wh R_P$ arising from a regular model $\wh X$,
Proposition~\ref{gersten local Sha vanish} asserts:

\begin{cor} \label{local sha vanish}
Suppose that $K$ is an equicharacteristic complete discretely valued field of
characteristic not dividing~$m$, and that $\mX$ is regular.  Then for every $P \in X$ and 
$m$-torsion finite \'etale commutative group scheme $A$ over $\wh R_P$, 
$\Sha^n(F_P,A)=\Sha^n_{\wh R_P}(F_P,A) = 0$.
\end{cor}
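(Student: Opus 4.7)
The plan is simply to verify that the ring $R = \wh R_P$ satisfies all the hypotheses of Proposition~\ref{gersten local Sha vanish} and then invoke that result directly; there is essentially no further work to do.

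First I would check that $\wh R_P$ is an equicharacteristic complete regular local ring. Completeness and locality are immediate from the definition. Regularity holds because $\wh X$ is assumed regular, and completion of a regular local ring at its maximal ideal remains regular. For equicharacteristic: since $K$ is equicharacteristic, the complete discrete valuation ring $T$ also has the property that $\cha(T) = \cha(k)$, so by the Cohen structure theorem $T$ contains a coefficient field isomorphic to $k$. Since $T \subseteq \wh R_P$ (the model $\wh X$ is a $T$-curve), we obtain that $\wh R_P$ contains this copy of $k$, and its residue field has the same characteristic as $k$; hence $\wh R_P$ is equicharacteristic.

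Next, the hypothesis that $m$ is not divisible by the characteristic of the ring holds because $\wh R_P$ has the same characteristic as $k$, and by assumption $\cha(k) \nmid m$. The group scheme $A$ is assumed to be an $m$-torsion finite \'etale commutative group scheme over $\wh R_P$, matching the remaining hypothesis of the proposition verbatim.

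Thus Proposition~\ref{gersten local Sha vanish} applies to $R = \wh R_P$ and $Z = \Spec(\wh R_P)$, with fraction field $F_P$, and yields $\Sha^n(F_P, A) = \Sha^n_{\wh R_P}(F_P, A) = 0$ for all $n \ge 1$, which is exactly the assertion. There is no genuine obstacle here: the only substantive content has been absorbed into Proposition~\ref{gersten local Sha vanish} (which in turn rests on Panin's version of Gersten's conjecture in Theorem~\ref{gersten}), and the corollary is a routine translation to the geometric setting of curves over $T$.
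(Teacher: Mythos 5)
Your proof is correct and is exactly the paper's argument: the paper states the corollary as a direct specialization of Proposition~\ref{gersten local Sha vanish} to $R = \wh R_P$, and your verification of the hypotheses (equicharacteristic via the coefficient field $k \subseteq T \subseteq \wh R_P$, regularity preserved under completion, the $m$-torsion and finiteness conditions being assumed verbatim) is the routine check that the paper leaves implicit.
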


\subsubsection{Local-global principles for function fields}

Finally, we obtain our local-global principles over our field $F$ with respect to discrete valuations:

\begin{thm} \label{sha_vanish}
Suppose that $K$ is an equicharacteristic complete discretely valued field of
characteristic not dividing $m$, and that $\mX$ is regular. 
Let $n>1$.  Then 
\[\Sha^n(F,\zmod m {n-1}) = \Sha_\mX^n(F,\zmod m {n-1}) = 0.\]  
If 
$[F(\bmu_m):F]$ is prime to $m$
then also $\Sha^n(F,\zmod m r)=\Sha_\mX^n(F,\zmod m r)=0$ for all $r$.
\end{thm}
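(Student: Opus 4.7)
The plan is to assemble this directly from the previous two subsections; the statement is essentially a bookkeeping corollary of Proposition~\ref{sha ses}(\ref{shasha}) combined with Corollary~\ref{local sha vanish}. First, since $\Omega_\mX \subseteq \Omega_F$ (every codimension one point of $\mX$ gives a discrete valuation on $F$), the local-global map defining $\Sha^n(F,A)$ factors through the one defining $\Sha_\mX^n(F,A)$, so $\Sha^n(F,A) \subseteq \Sha_\mX^n(F,A)$. Hence in each case it suffices to show the outer term $\Sha_\mX^n(F,A)$ vanishes.

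Next, for $A = \zmod m {n-1}$ with $n>1$, hypothesis~(\ref{sha pts cyclic}) of Theorem~\ref{shapoints} is satisfied with $r = n-1$ (recall that $\cha(K) = \cha(k)$ by the equicharacteristic assumption, so $m$ is prime to $\cha(k)$). Proposition~\ref{sha ses}(\ref{shasha}) therefore yields an isomorphism
\[\phi : \Sha_\mX^n(F,\zmod m {n-1}) \iso \prod_{P \in X_{(0)}}{\!\!\!\!'} \  \Sha^n_{\wh R_P}(F_P,\zmod m {n-1}).\]
Now I would invoke Corollary~\ref{local sha vanish}: since $K$ is equicharacteristic of residue characteristic not dividing $m$, since $\mX$ is assumed regular (so each $\wh R_P$ is an equicharacteristic complete regular local ring), and since $\zmod m {n-1} = \bmu_m^{\otimes(n-1)}$ is an $m$-torsion finite \'etale commutative group scheme over $\wh R_P$ (as $m$ is invertible there), the corollary gives $\Sha^n_{\wh R_P}(F_P,\zmod m {n-1}) = 0$ for every closed point $P \in X$. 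Thus $\Sha_\mX^n(F,\zmod m {n-1}) = 0$, as desired.

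For the final assertion, the additional hypothesis $[F(\bmu_m):F]$ prime to $m$ is exactly the alternative condition in~(\ref{sha pts cyclic}) of Theorem~\ref{shapoints} that allows arbitrary twists $r$. With this hypothesis in force, Proposition~\ref{sha ses}(\ref{shasha}) applies to $A = \zmod m r$ for all $r \in \mbb Z$, and $\zmod m r = \bmu_m^{\otimes r}$ remains a finite \'etale $m$-torsion commutative group scheme over each $\wh R_P$, so Corollary~\ref{local sha vanish} again kills the right-hand side of $\phi$. No step here is an obstacle: everything was pre-packaged in Sections~\ref{curve points} and~\ref{curve dvr}, with the only thing to verify being the compatibility of hypotheses (equicharacteristic plus regularity of $\mX$ gives equicharacteristic regularity of $\wh R_P$; primality to residue characteristic propagates from $K$ to $\wh R_P$). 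The only conceptually substantive ingredients have already been used upstream: Theorem~\ref{shapoints} (which in turn relies on the patching Mayer-Vietoris sequence and Bloch-Kato) for the global piece, and Panin's exact Cousin/Bloch-Ogus sequence for the local piece.
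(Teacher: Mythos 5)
Your proof is correct and follows exactly the same route as the paper's: first reduce to $\Sha_\mX^n$ via the containment $\Sha^n \subseteq \Sha_\mX^n$, then combine Proposition~\ref{sha ses}(\ref{shasha}) (which requires Theorem~\ref{shapoints} and hence $n>1$) with Corollary~\ref{local sha vanish} to make the restricted product vanish. The only thing you supply beyond the paper's terse three-sentence proof is explicit verification of the applicability of the hypotheses (equicharacteristic, invertibility of $m$ in $\wh R_P$, finiteness and \'etaleness of $\zmod m r$), which is accurate.
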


\begin{proof}
In each of the two cases considered, hypothesis~(\ref{sha pts cyclic}) of Theorem~\ref{shapoints} is satisfied.  Since $n>1$, 
Proposition~\ref{sha ses}(\ref{shasha}) then applies.  
The theorem now follows by Corollary~\ref{local sha vanish} and the containment $\Sha^n(F,A) \subseteq \Sha_\mX^n(F,A)$.
\end{proof}

\begin{rem} 
The case $n=2$, concerning Brauer groups, holds even without assuming equal characteristic
(\cite[Theorem~4.3(ii)]{CPS}, \cite[Corollary~9.13]{HHK:H1}).  It would be interesting to know if the same is true for $n>2$, and also if 
Theorem~\ref{sha_vanish} 
has an analog for $\Gm$ in characteristic zero as in 
Theorem~\ref{shapoints}(\ref{sha pts char 0}).  But carrying over the above proof would require versions of 
Panin's result \cite[Theorem~C]{Panin:EGC} in those situations.
\end{rem}

\section{Applications to torsors under noncommutative groups} \label{applications}

As an application of our results, in this section we give local-global principles for $G$-torsors over $F$
for certain connected noncommutative linear algebraic groups
$G$, and for related structures.  
Our method is to use cohomological invariants in order to reduce to our local-global principles in Galois cohomology (viz.\ to Theorems~\ref{shapoints}(\ref{sha pts cyclic}) and~\ref{sha_vanish}).

We preserve the notation and terminology established at the beginning of 
Section~\ref{curve section}.  In particular, we write $T$ for the valuation ring of $K$, and $k$ for the residue field.  We let $\wh X$ be a normal, integral projective curve over $T$, with closed fiber $X$ and function field $F$.  As before, we write $\Omega_F$ for the set of discrete valuations on the field $F$, and write $\Omega_\mX$ for the subset of $\Omega_F$ consisting of those discrete valuations that arise from codimension one points on $\mX$.

\subsection{Relation to prior results}

The basic strategy used in this section to obtain local-global principles
for torsors was previously used in~\cite[Theorem~5.4]{CPS}, to obtain
a local-global principle for $G$-torsors over the function field $F$
of a smooth projective geometrically integral curve over a $p$-adic
field $K$, where $G$ is a linear algebraic $F$-group that is
quasisplit, simply connected, and absolutely almost simple without an
$E_8$ factor. There they used the local-global principle of Kato for
$H^3$ together with the fact that the fields under their consideration
were of cohomological dimension three. Our applications arise from our
new local-global principles for higher cohomology groups, and hence
do not require any assumptions on cohomological dimension.

Local-global principles for $G$-torsors were also obtained
in~\cite{HHK:H1} (as well as in~\cite{HHK}, in the context of
patches).  But there the linear algebraic groups $G$ were required to
be rational varieties, whereas here there is no such hypothesis.  On
the other hand, here we will be looking at specific types of groups,
such as $E_8$ and $F_4$.  Another difference is that
in~\cite{HHK:H1}, in order to obtain local-global principles with
respect to discrete valuations, we needed to make additional
assumptions (e.g.\ that $k$ is algebraically closed of characteristic
zero, or that $G$ is defined and reductive over $\mX$;
see~\cite[Corollary~8.11]{HHK:H1}).  Here the only assumption needed
for local-global principles with respect to discrete valuations is
that $K$ is equicharacteristic.  (If we wish to consider only those
discrete valuations that arise from a given model $\mX$ of $F$, then
we also need to assume that $\mX$ is regular.)  Thus even in the cases
where the groups considered below are rational, the results here go
beyond what was shown for those groups in~\cite{HHK:H1}.

\subsection{Injectivity vs.\ triviality of the kernel}

The local-global principles for $G$-torsors will be phrased in terms
of local-global maps on $H^1(F,G)$.  Because of non-commutativity,
$H^1(F,G)$ is just a pointed set, not a group.  Thus there are two
distinct questions that can be posed about a local-global map: whether
the kernel is trivial, and whether the map is injective (the latter
condition being stronger).  And as in Section~\ref{curve section},
there are actually several local-global maps:  
$H^1(F,G) \to \prod_{v \in \Omega_F} H^1({F_v},G)$,  
$H^1(F,G) \to \prod_{v \in \Omega_\mX} H^1({F_v},G)$, and
$H^1(F,G) \to \prod_{P \in X} H^1({F_P},G)$,
with kernels $\Sha(F,G)$,
$\Sha_\mX(F,G)$, and $\Sha_{\mX,X}(F,G)$ respectively.  
(As is common, here we write $\Sha$ for $\Sha^1$.)
Note
that if $\Sha_\mX(F,G)=0$ for some model $\mX$ then $\Sha(F,G)=0$; and
similarly for injectivity of the corresponding maps.  So we will
emphasize the cases of $\Sha_\mX(F,G)$ and $\Sha_{\mX,X}(F,G)$.

\subsection{Local-global principles via cohomological invariants}

The approach that we take here for obtaining our applications is to
use cohomological invariants of algebraic objects.

Recall that an \textit{invariant} over $F$ is a morphism of functors
$a:S \to H$, where $S:({\rm Fields}/F) \to ({\rm Pointed\ Sets})$ and
$H:({\rm Fields}/F) \to ({\rm Abelian\ Groups}) ($\cite{GMS}, Part~I,
Sect.~I.1). Most often, as in~\cite{GMS}, $S$ will have the form $S_G$
given by $S_G(E) = H^1(E,G)$ for some linear algebraic group $G$ over $F$; this classifies $G$-torsors over $E$, and also often
classifies other types of algebraic structures over $F$.  In practice,
$H(E)$ will usually take values in Galois cohomology groups of the form
$H^n(E,\Z/m\Z(n-1))$.

The simplest situation is described in the following general result,
where we retain the standing hypotheses stated at the beginning of 
Section~\ref{curve section}, with $\mX$ a normal model of $F$.

\begin{prop} \label{invariant local-global}
Let $a:S \to H$ be a cohomological invariant over $F$, where $H(E) =
H^n(E,\Z/m\Z(r))$ for some integers $n,m,r$ with $n,m$ positive, and
where $m$ is not divisible by $\cha(k)$.  Assume either that $r=n-1$,
or else that the degree $[F(\bmu_m):F]$ is prime to $m$.

\renewcommand{\theenumi}{\alph{enumi}}
\renewcommand{\labelenumi}{(\alph{enumi})}
\begin{enumerate}
\item \label{loc-gl trivial kernel}
If $a(F):S(F) \to H(F)$ has trivial kernel, 
then so does
the local-global map \(S(F) \to \prod_{P \in \cX} S(F_P)\).
Moreover, if $K$ is equicharacteristic and $\mX$ is regular, then the same holds for 
 \(S(F) \to
\prod_{v \in \Omega_{\wh X}}  S(F_v)\).
\item \label{loc-gl injective}
If $a(F):S(F) \to H(F)$ is injective, then so is the local-global map \(S(F) \to \prod_{P \in \cX} S(F_P)\).  If in addition $K$ is equicharacteristic and $\mX$ is regular, then \(S(F) \to
\prod_{v \in \Omega_{\wh X}}  S(F_v)\) is injective as well.
\end{enumerate}
\end{prop}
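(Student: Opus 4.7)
The plan is to carry out a short diagram chase using the naturality of the cohomological invariant $a$ together with the local-global principles for Galois cohomology already established in Theorem~\ref{shapoints}(\ref{sha pts cyclic}) and Theorem~\ref{sha_vanish}. Concretely, because $a: S \to H$ is a morphism of functors $(\mathrm{Fields}/F) \to (\mathrm{Pointed\ Sets})$ (with $H$ taking values in abelian groups, hence pointed at $0$), for each extension $E/F$ base change gives a commutative square
\[
\xymatrix{
S(F) \ar[r]^-{a(F)} \ar[d] & H^n(F, \Z/m\Z(r)) \ar[d] \\
S(E) \ar[r]^-{a(E)} & H^n(E, \Z/m\Z(r))
}
\]
which we assemble over $E = F_P$ for $P \in X$, and then separately over $E = F_v$ for $v \in \Omega_{\wh X}$. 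Thus we obtain a commutative square in which the left column is the local-global map for $S$ and the right column is the local-global map for $H^n(-, \Z/m\Z(r))$.

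The essential input is that, under the hypothesis that either $r = n-1$ or $[F(\bmu_m):F]$ is prime to $m$, the right-hand vertical map
\[
H^n(F, \Z/m\Z(r)) \to \prod_{P \in X} H^n(F_P, \Z/m\Z(r))
\]
is injective by Theorem~\ref{shapoints}(\ref{sha pts cyclic}); and if moreover $K$ is equicharacteristic and $\wh X$ is regular, the map $H^n(F, \Z/m\Z(r)) \to \prod_{v \in \Omega_{\wh X}} H^n(F_v, \Z/m\Z(r))$ is injective by Theorem~\ref{sha_vanish}. (Both theorems require $n > 1$, which is the only mild implicit constraint; for the invariants arising in practice, such as the Rost invariant, $n \ge 2$ always.)

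Given this, (\ref{loc-gl trivial kernel}) is immediate: if $\alpha \in S(F)$ maps to the basepoint in every $S(F_P)$, then by commutativity $a(F)(\alpha)$ lies in the kernel of the right-hand column, hence equals $0$; the triviality hypothesis on $\ker a(F)$ then forces $\alpha$ to be the basepoint. The discrete valuation variant is identical once Theorem~\ref{sha_vanish} is invoked in place of Theorem~\ref{shapoints}. Similarly, for (\ref{loc-gl injective}), if $\alpha, \beta \in S(F)$ have the same image in every $S(F_P)$ (resp.\ $S(F_v)$), then $a(F)(\alpha)$ and $a(F)(\beta)$ have the same image in every $H^n(F_P, \Z/m\Z(r))$ (resp.\ $H^n(F_v, \Z/m\Z(r))$); injectivity of the right-hand column yields $a(F)(\alpha) = a(F)(\beta)$, and injectivity of $a(F)$ then gives $\alpha = \beta$.

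There is no real obstacle in the proof itself, since the hard work has already been done in the proofs of Theorem~\ref{shapoints} and Theorem~\ref{sha_vanish}. The only point worth flagging is that the invariant $a$ be normalized so that $a(E)$ sends the basepoint of $S(E)$ to $0 \in H^n(E, \Z/m\Z(r))$, which is built into the definition of a cohomological invariant; this is what makes the kernel-chasing argument in (\ref{loc-gl trivial kernel}) meaningful.
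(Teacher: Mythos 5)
Your argument is exactly the paper's: assemble the naturality squares for $a$ over the local fields $F_P$ (resp.\ $F_v$), invoke Theorem~\ref{shapoints}(\ref{sha pts cyclic}) (resp.\ Theorem~\ref{sha_vanish}) for injectivity of the right-hand column, and finish with a diagram chase. Your side remark that these theorems require $n>1$, whereas the proposition's hypotheses only say $n\geq 1$, is a fair observation the paper leaves implicit; otherwise the two proofs coincide.
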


\begin{proof}
Consider the commutative diagrams
\[\xymatrix@C=1.3cm @R=.5cm{
S(F) \ \ar[r]^-{a(F)} \ar[d] & \ H(F) \ar[d] & 
S(F) \ \ar[r]^-{a(F)} \ar[d] & \ H(F) \ar[d] \\
\prod_{P \in \cX} S(F_P) \ \ \ar[r]^-{\prod a(F_P)} & \ \ \prod_{P \in
\cX} H(F_P) & 
\prod_{v \in \Omega_{\mX}} S(F_v) \ \ \ar[r]^-{\prod a(F_v)} & \ \
\prod_{v \in \Omega_{\mX}} H(F_v). 
}\]
The result follows by a diagram chase, using the fact that the
right-hand vertical map in the first diagram is injective by
Theorem~\ref{shapoints}(\ref{sha pts cyclic}), and that the
corresponding map in the second diagram is injective in the case that
$K$ is equicharacteristic and $\mX$ is regular, by
Theorem~\ref{sha_vanish}.  
\end{proof}

Recall that a linear algebraic group $G$ over $F$ is \textit{quasi-split} if it has a Borel subgroup defined over $F$.  It is \textit{split} if it has a Borel subgroup over $F$ that has a composition series whose successive quotient groups are each isomorphic to $\Gm$ or $\Ga$.  If $G$ is reductive, this is equivalent to $G$ having a maximal torus that is split (i.e.\ a product $\Gm^n$).

\begin{cor} \label{rost local-global}
Let $G$ be a simply connected linear algebraic group over $F$.
Consider the Rost invariant $R_G:H^1(*,G) \to H^3(*,\Z/m\Z(2))$
of $G$, and assume that the characteristic of $k$ does not divide $m$.
In each of the following cases, $\Sha_{\mX,X}(F,G) = 0$.  
If $K$ is equicharacteristic then $\Sha(F,G) = 0$; and $\Sha_\mX(F,G) = 0$ if in addition the given model $\wh X$ is regular.
\begin{itemize}
\item $G$ is a quasi-split group of type $E_6$ or $E_7$.
\item $G$ is
an almost simple group that is quasi-split of absolute rank at most $5$.
\item $G$ is
an almost simple group that is quasi-split of type $B_6$ or $D_6$.
\item $G$ is an almost simple group that is split of type $D_7$.
\end{itemize}
\end{cor}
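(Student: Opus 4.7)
The plan is to apply Proposition~\ref{invariant local-global}(\ref{loc-gl trivial kernel}) to the Rost invariant
\[R_G : H^1(-, G) \to H^3(-, \Z/m\Z(2)),\]
viewed as a cohomological invariant with parameters $n = 3$ and $r = 2 = n-1$; the hypothesis $\cha(k) \nmid m$ on the coefficients is given. Once one knows that $R_G(F)$ has trivial kernel, that proposition immediately yields $\Sha_{\mX,X}(F,G) = 0$. When $K$ is equicharacteristic and $\mX$ is regular, the same proposition yields $\Sha_\mX(F,G) = 0$, hence also $\Sha(F,G) = 0$ via the inclusion $\Sha(F,G) \subseteq \Sha_\mX(F,G)$ coming from $\Omega_\mX \subseteq \Omega_F$. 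For the assertion $\Sha(F,G) = 0$ in the equicharacteristic case without assuming the given $\mX$ is regular, I would first pass to a regular projective model $\mX'$ of $F$ over $T$ (guaranteed by Lipman's resolution for excellent two-dimensional schemes) and apply the above argument to $\mX'$ in place of $\mX$.

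The proof therefore reduces to verifying that the Rost invariant $R_G(F) : H^1(F, G) \to H^3(F, \Z/m\Z(2))$ has trivial kernel in each of the four families of groups listed. This is the real content of the result, and each case is an application of a known triviality theorem from the cohomological invariants literature: for quasi-split simply connected $G$ of type $E_6$ or $E_7$, the triviality is due to Garibaldi (building on earlier work of Rost, Chernousov, and others); for almost simple, quasi-split and simply connected $G$ of absolute rank at most $5$, the claim follows by combining the descriptions of $R_G$ in the types $A_n$, $B_n$, $C_n$, $D_n$ of small rank (where $R_G$ is expressible through classical invariants of central simple algebras or quadratic forms whose kernels are known to be trivial in low rank) together with triviality of the Rost kernel for the exceptional types $G_2$ and $F_4$ due to Rost and Serre; and for the cases of quasi-split type $B_6$ or $D_6$ and split type $D_7$, triviality of the Rost kernel is established in Garibaldi's monograph on cohomological invariants of spin groups.

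The main obstacle lies not in the local-global passage itself, which is a formal diagram chase using the local-global principles in Galois cohomology of Theorems~\ref{shapoints} and~\ref{sha_vanish} (as packaged in Proposition~\ref{invariant local-global}), but rather in the case-by-case invocation of these external triviality results for the kernel of $R_G$, together with the verification that each of them is available in the generality we need (for arbitrary base field with the stated assumption on $m$, taking values in $\Z/m\Z(2)$). Once those inputs are in place, the conclusion of the corollary is immediate.
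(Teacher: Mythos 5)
Your proof is correct and follows essentially the same route as the paper: reduce to triviality of the kernel of the Rost invariant over a field (the genuine content), and then deduce the vanishing of the various $\Sha$ groups from Proposition~\ref{invariant local-global}(\ref{loc-gl trivial kernel}) with $n=3$, $r=2=n-1$, passing to a regular model for the assertion about $\Sha(F,G)$ in the equicharacteristic case. The only cosmetic difference is in the references: the paper attributes all four cases to a single paper of Garibaldi (\cite[Main Theorem~0.1]{Gar} for the quasi-split $E_6$/$E_7$ case and \cite[Theorem~0.5]{Gar} for the remaining three), whereas your proposal spreads the attribution over several different sources.
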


\begin{proof}
In each of these cases, the Rost invariant $R_G$ has trivial kernel.  This is by \cite[Main
Theorem~0.1]{Gar} in the first case, and by \cite[Theorem~0.5]{Gar} in
the other cases.  So the assertion follows from
Proposition~\ref{invariant local-global}(\ref{loc-gl trivial kernel}).
\end{proof}

\begin{cor} \label{SL loc-glob}
Let $m$ be a square-free positive integer that is not divisible by the characteristic of $k$, and let $A$ be a central simple $F$-algebra of degree $m$.
Then 
the local-global map \(H^1(F, \SL_1(A)) \to \prod_{P \in \cX} H^1(F_P, \SL_1(A))\) is injective.  If in addition $K$ is equicharacteristic and $\mX$ is regular, then 
the map \(H^1(F,\SL_1(A)) \to
\prod_{v \in \Omega_{\wh X}} H^1(F_v, \SL_1(A))\) is injective.
\end{cor}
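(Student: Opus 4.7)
The plan is to apply Proposition~\ref{invariant local-global}(\ref{loc-gl injective}) to a cohomological invariant attached to $\SL_1(A)$ taking values in $H^3(*,\Z/m\Z(2))$. First I will identify, functorially in a field extension $E/F$, the pointed set $H^1(E,\SL_1(A))$ with the abelian group $E^\times/\Nrd(A_E^\times)$: this follows from the short exact sequence
\[
1 \to \SL_1(A) \to \GL_1(A) \to \Gm \to 1
\]
(with third arrow the reduced norm) together with the vanishing of $H^1(E,\GL_1(A))$, a generalized Hilbert 90 stating that all right $A_E$-modules of $E$-dimension $\dim_F A$ are isomorphic.

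Next, I will consider the invariant $R \colon H^1(*,\SL_1(A)) \to H^3(*,\Z/m\Z(2))$ which, under this identification, sends the class of $u \in E^\times$ to the cup product $(u) \cup [A_E]$, where $(u) \in E^\times/(E^\times)^m = H^1(E,\mu_m)$ and $[A_E] \in H^2(E,\mu_m)$ is the Brauer class (the latter lies in $\mu_m$-cohomology because the period of $A$ divides its degree $m$). The key input is that when $m$ is square-free, this invariant $R$ is \emph{injective} over every field extension $E$ of $F$. This is a theorem of Merkurjev--Suslin: Suslin treated the case of prime degree, and Brauer's decomposition theorem (which expresses any central simple algebra of square-free index as a tensor product of algebras of prime degree) extends it to square-free degree.

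With this injectivity in hand, Proposition~\ref{invariant local-global}(\ref{loc-gl injective}) applies with $n=3$ and $r=n-1=2$; the hypothesis that $m$ is prime to $\cha(k)$ is part of the assumption of the corollary. This immediately yields injectivity of the local-global map $H^1(F,\SL_1(A)) \to \prod_{P \in \cX} H^1(F_P,\SL_1(A))$, and, when $K$ is equicharacteristic and $\mX$ is regular, of the map $H^1(F,\SL_1(A)) \to \prod_{v \in \Omega_\mX} H^1(F_v,\SL_1(A))$.

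The main obstacle is locating and citing the correct form of the Suslin--Merkurjev injectivity theorem for the square-free case, and verifying that it is formulated as a genuinely functorial invariant on the category of field extensions of $F$, so that it qualifies as a cohomological invariant in the sense required by Proposition~\ref{invariant local-global}. The identification $H^1(E,\SL_1(A)) = E^\times/\Nrd(A_E^\times)$ as pointed sets (and hence bijectivity on kernels and injectivity-by-kernel arguments) is routine once the Hilbert 90 vanishing is in hand.
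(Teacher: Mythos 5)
Your proposal follows the same route as the paper: identify $H^1(E,\SL_1(A))$ with $E^\times/\Nrd(A_E^\times)$, invoke the Merkurjev--Suslin cohomological invariant (cup product with the Brauer class) and its injectivity in the square-free case, and then apply Proposition~\ref{invariant local-global}(\ref{loc-gl injective}) with $n=3$, $r=2$. The paper simply cites \cite[12.2]{MerSus} (and \cite[7.2]{Se:Bo}) for the invariant and its injectivity, whereas you additionally unwind the cup-product description and sketch the reduction from square-free to prime degree; this elaboration is sound but not a different argument.
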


\begin{proof}
By~\cite[12.2]{MerSus} (see also~\cite[7.2]{Se:Bo}), given a division algebra $A$ of degree $m$, there is a cohomological invariant 
$a:H^1(*,\SL_1(A)) \to H^3(*,\Z/m\Z(2))$ that is injective if $m$ is square-free.  So the result follows from Proposition~\ref{invariant local-global}(\ref{loc-gl injective}).
\end{proof}

In particular, $\Sha_{\mX,X}(F, \SL_1(A))$ and $\Sha(F, \SL_1(A))$ respectively vanish in the above situations.  Also, via the identification of 
$H^1(F, \SL_1(A))$ with $F^\times/\Nrd(A^\times)$, the above result gives a local-global principle for elements of $F^\times$ to be reduced norms from a (central) division algebra $A$; cf.\ also \cite[p.~146]{Kato}.

Other applications can be obtained by using a combination of cohomological invariants.  This is done in the next results.

\begin{prop} \label{loc glob E8}
Let $G$ be a
simple linear algebraic group of type $E_8$ over $F$.
\renewcommand{\theenumi}{\alph{enumi}}
\renewcommand{\labelenumi}{(\alph{enumi})}
\begin{enumerate}
\item \label{odd degree E8}
Assume $\cha(K) = 0$.  Then the group $G$ is split over some odd degree extension of $F$ if and only if $G_{F_P}$ is split over some odd degree extension of $F_P$ for every $P \in X$. 
\item \label{prime to 5 E8}
Assume $\cha(K) \ne 2,3,5$.  Then the same holds for extensions of degree prime to five (rather than of odd degree) over $F$ and each $F_P$.
\item \label{valuations E8}
Assume in addition that $K$ is equicharacteristic and $\mX$ is regular.  Then the assertions in parts~(\ref{odd degree E8}) and~(\ref{prime to 5 E8}) hold with the fields $F_P$ replaced by the fields $F_v$ for all $v \in \Omega_{\wh X}$.
\end{enumerate}
\end{prop}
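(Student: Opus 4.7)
The plan is to combine our local-global principles for $H^3(F,\Z/m\Z(2))$ (namely Theorem~\ref{shapoints}(\ref{sha pts cyclic}) for parts~(a) and~(b), and Theorem~\ref{sha_vanish} for part~(c)) with splitting criteria of Chernousov for torsors under the split group of type $E_8$.

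Since $E_8$ has trivial center and no outer automorphisms, every simple group $G$ of type $E_8$ over a field $E$ is an inner twist of the split group $E_8^{\mathrm{sp}}/E$, and is therefore classified by a cohomology class $\tau_G \in H^1(E, E_8^{\mathrm{sp}})$, with $G$ being split precisely when $\tau_G$ is trivial. The Rost invariant $R(\tau_G) \in H^3(E, \Z/60\Z(2))$ decomposes into $p$-primary components $R_p(\tau_G) \in H^3(E, \Z/p^{a_p}\Z(2))$ for $p = 2,3,5$, where $60 = 2^{a_2}\cdot 3^{a_3}\cdot 5^{a_5}$. A theorem of Chernousov asserts (in characteristic zero) that $\tau_G$ becomes trivial over some odd-degree extension of $E$ iff $R_2(\tau_G)=0$, and (in characteristic $\ne 2,3,5$) that $\tau_G$ becomes trivial over some extension of degree prime to $5$ iff $R_5(\tau_G)=0$. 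The Rost invariant is natural in $E$, so restriction from $F$ to $F_P$ (or $F_v$) commutes with $R_p$.

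With this in hand, for part~(a) I would argue as follows. The forward direction is trivial. For the converse, suppose $G_{F_P}$ splits over some odd-degree extension of $F_P$ for each $P \in X$; then $R_2(\tau_G)$ has trivial image in $H^3(F_P, \Z/2^{a_2}\Z(2))$ for every $P$. By Theorem~\ref{shapoints}(\ref{sha pts cyclic}) (applied with $n=3$ and $m = 2^{a_2}$), the map $H^3(F, \Z/2^{a_2}\Z(2)) \to \prod_{P \in X} H^3(F_P, \Z/2^{a_2}\Z(2))$ is injective, so $R_2(\tau_G) = 0$, and Chernousov's criterion gives an odd-degree splitting extension of $F$. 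Part~(b) runs the same argument using $R_5$ and Theorem~\ref{shapoints}(\ref{sha pts cyclic}) with $m=5$. For part~(c), replace Theorem~\ref{shapoints}(\ref{sha pts cyclic}) by Theorem~\ref{sha_vanish} throughout, which applies once $K$ is equicharacteristic and $\mX$ is regular.

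The main obstacle is the precise invocation of Chernousov's $p$-primary splitting criteria for $E_8$-torsors, in particular ensuring that the coefficient groups match what our local-global principles require, and that the necessary characteristic hypotheses (including on the residue field $k$, so that Theorem~\ref{shapoints}(\ref{sha pts cyclic}) applies with the given value of $m$) are either included in the statement or follow from the existing hypotheses. Once Chernousov's criteria are in place, the rest is a diagram chase of the same shape as Proposition~\ref{invariant local-global}.
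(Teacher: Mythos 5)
Your treatment of part~(\ref{prime to 5 E8}), using Chernousov's result that the mod-$5$ component of the Rost invariant detects splitting over an extension of degree prime to $5$, matches the paper, and so does your part~(\ref{valuations E8}) once the other two parts are in place. The genuine gap is in part~(\ref{odd degree E8}). You assert, as a ``theorem of Chernousov,'' that a group of type $E_8$ over a field $E$ of characteristic zero splits over some odd-degree extension if and only if the $2$-primary component $R_2(\tau_G)$ of the Rost invariant vanishes. No such theorem exists. By restriction--corestriction, odd-degree splitting forces $R_2(\tau_G)=0$, but the converse is false: the mod-$2$ Rost invariant is far from sufficient to detect odd-degree splitting of $E_8$-torsors. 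Consequently the step ``$R_2(\tau_G)=0$, hence there is an odd-degree splitting field of $F$'' does not go through, and part~(\ref{odd degree E8}) of your proposal is not established.

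The paper's actual argument for part~(\ref{odd degree E8}) is a two-stage cohomological-invariants argument. It first uses the local-global principle in degree~$3$ (Theorem~\ref{shapoints}(\ref{sha pts cyclic})) to show that the full Rost invariant $r_G \in H^3(F,\zmod m 2)$ has odd order, by multiplying by a suitable odd integer $d$ built from the degrees of the local splitting extensions and killing $dr_G$ locally. It then invokes Semenov's degree-$5$ invariant $u : H^1(\cdot,G_0)_0 \to H^5(\cdot,\Z/2\Z)$ (from \cite[Corollary~8.7]{Sem}), defined only on the subset $H^1(\cdot,G_0)_0$ of classes whose Rost invariant has odd order, which \emph{does} detect odd-degree splitting in characteristic zero. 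The functoriality of $u$ and a second application of Theorem~\ref{shapoints}(\ref{sha pts cyclic}), now in degree~$5$ with $\Z/2\Z$ coefficients, show $u([G])=0$, and Semenov's characterization concludes. So what your argument is missing is precisely the $H^5$ invariant, together with the preliminary odd-order step that makes $[G]$ land in the domain of that invariant. You correctly anticipated that one must ``ensure the coefficient groups match what our local-global principles require,'' but the real obstruction was not matching twists or characteristics; it was that the invariant you need in part~(\ref{odd degree E8}) lives in degree~$5$, not degree~$3$.
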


\begin{proof}
For the forward implications, observe that if $G$ is split over a finite extension $E/F$ of degree $d=[E:F]$, and if $F'/F$ is any field extension,
then $G$ also splits over the compositum $E':=EF'$ in an algebraic closure of $F'$, and $[E':F']$ divides $d$.  Taking $F'$ equal to $F_P$ or $F_v$ yields the forward implications.  We now show the reverse implications.

\smallskip

\textit{Proof of~(\ref{odd degree E8}) and the corresponding part of~(\ref{valuations E8})}:

Let $G_0$ be a split simple algebraic group over $F$ of type $E_8$.  Then $H^1(F,G_0)$ classifies simple algebraic
groups of type $E_8$ over $F$, since $G_0 = \Aut(G_0)$.  Given a group $G$ as in the proposition, 
let $[G]$ be the class of $G$ in $H^1(F,G_0)$, and let 
$r_G := R_{G_0}([G])$ be the associated
Rost invariant, say with order $m$.  
Thus $r_G \in H^3(F,\zmod m 2)$.

For each $P \in X$, the group $G$ becomes split over some extension
$E_P/F_P$ of odd degree $d_P$.  Thus the Rost invariant of $G$ over
$F_P$ maps to zero in $H^3(E_P,\Z/m\Z(2))$, and hence it is 
$d_P$-torsion in $H^3(F_P,\Z/m\Z(2))$ by a standard
restriction-corestriction argument.  Thus it is also $d_P'$-torsion, where $d_P'$ is the greatest common divisor of $d_P$ and $m$.  Let $d$ be the least common 
multiple of the odd integers $d_P'$, each of which divides $m$.  
Thus $dr_G  \in H^3(F,\Z/m\Z(2))$
has trivial image in $H^3(F_P,\Z/m\Z(2))$ for all $P$.  It follows
from Theorem~\ref{shapoints}(\ref{sha pts cyclic}) that $dr_G$ is
trivial.  Hence the order of the Rost invariant $r_G$ over $F$ is odd.  

Let $H^1(*, G_0)_0 \subseteq H^1(*, G_0)$ be the subset consisting of classes $\alpha$ such that $R_{G_0}(\alpha)$ has odd order.  By the above, this contains $[G]$.
Now by \cite[Corollary~8.7]{Sem}, 
since $\cha(F)=\cha(K)=0$, 
there is a cohomological invariant $u:H^1(*, G_0)_0 \to H^5(*,\Z/2\Z)$ such that for any field extension $E/F$, the invariant $u([G_E])$ vanishes if and only if $G$ splits over a field extension of $E$ of odd degree. 

By functoriality of $u$, the class $u([G])$ maps to $u([G_{F_P}])$ for every $P \in X$. 
But for every $P \in X$, $G_{F_P}$ is split over an extension of odd degree and hence $u([G_{F_P}])$ is trivial in $H^5(F_P,\Z/2\Z)$. 
By Theorem~\ref{shapoints}(\ref{sha pts cyclic}), it follows that $u([G])$ is trivial in $H^5(F,\Z/2\Z)$.  
The conclusion of~(\ref{odd degree E8})  
now follows from the defining property of $u$.

The corresponding part of~(\ref{valuations E8}) is proved in exactly the same way, but with $F_v$ replacing $F_P$ and with Theorem~\ref{sha_vanish} replacing Theorem~\ref{shapoints}(\ref{sha pts cyclic}).

\smallskip

\textit{Proof of~(\ref{prime to 5 E8}) and the corresponding part of~(\ref{valuations E8})}:

By the main theorem in~\cite{Cher}, since $\cha(F) \ne 2,3,5$, the Rost invariant of $G$ over a field extension $E/F$ has trivial image in $H^3(E,\Z/5\Z(2))$ if and only if $G$ splits over some finite extension of $E$ having degree prime to five.  The desired assertion now follows from Proposition~\ref{invariant local-global}, taking $S(E)$ to be the subset of $H^1(E,G_0)$ that consists of elements that split over some field extension of $E$ having degree prime to five, and with $a$ being the restriction to this subset of the Rost invariant modulo $5$.
\end{proof}

\begin{prop}  \label{Albert}
Assume that $\cha(K) \ne 2,3$.  Then Albert algebras over $F$ have each of the following properties if and only if the respective properties hold after base change to $F_P$ for each $P \in X$.  \begin{itemize}
\item The algebra is reduced.
\item The algebra is split.
\item The automorphism group of the algebra is anisotropic.
\item Two reduced algebras are isomorphic.
\end{itemize}
The same holds for base change to $F_v$ for each $v \in \Omega_{\wh X}$, in the case that $K$ is equicharacteristic and $\mX$ is regular.
\end{prop}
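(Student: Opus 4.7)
The plan is to characterize each of the four properties in terms of the classical cohomological invariants of Albert algebras, and then to reduce to the local-global principles of Theorems~\ref{shapoints}(\ref{sha pts cyclic}) and~\ref{sha_vanish} via Proposition~\ref{invariant local-global}.  Albert algebras over a field of characteristic different from $2,3$ are classified by $H^1(*,F_4)$, where $F_4$ denotes the split exceptional simple group, and they carry classical cohomological invariants $f_3 \in H^3(*,\Z/2\Z)$, $f_5 \in H^5(*,\Z/2\Z)$, and $g_3 \in H^3(*,\Z/3\Z(2))$ (see \cite[Part~II]{GMS}).  The key characterizations I would invoke are: $J$ is reduced iff $g_3(J)=0$; $J$ is split iff $f_3(J)=f_5(J)=g_3(J)=0$; two reduced Albert algebras are isomorphic iff their $f_3$ and $f_5$ invariants coincide; and every Albert algebra is either reduced or a division algebra, with $\Aut(J)$ anisotropic iff $J$ is a division algebra (equivalently $g_3(J)\ne 0$).

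Since $\cha(K)\ne 2,3$, and since $\cha(k)=\cha(K)$ in the equicharacteristic setting of the second half of the statement, the coefficient modules $\Z/2\Z$ and $\Z/3\Z(r)$ satisfy the hypotheses of Theorems~\ref{shapoints}(\ref{sha pts cyclic}) and~\ref{sha_vanish}.  The splitness statement follows from Proposition~\ref{invariant local-global}(\ref{loc-gl trivial kernel}) applied to the composite invariant $(f_3,f_5,g_3)$, whose kernel over every field consists of only the split class; so local splitness at every $F_P$ (resp.\ $F_v$) forces global splitness.  The reducedness statement follows by applying the local-global principle directly to the single class $g_3(J) \in H^3(F,\Z/3\Z(2))$: local triviality at every patch forces global triviality, while the converse (preservation) is trivial since idempotents clearly lift under base change.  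The isomorphism statement for reduced algebras is obtained by applying the same local-global principle to the differences $f_3(J)-f_3(J')\in H^3(F,\Z/2\Z)$ and $f_5(J)-f_5(J')\in H^5(F,\Z/2\Z)$: both vanish locally by assumption and hence vanish globally.

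For the statement about $\Aut(J)$ being anisotropic, one direction is immediate: isotropy of $\Aut(J)$ over $F$ is preserved by base change (any split torus of $\Aut(J)$ induces a split torus of $\Aut(J_{F_P})$), so anisotropy of $\Aut(J_{F_P})$ for every $P$ forces anisotropy of $\Aut(J)$ over $F$.  The converse direction---that $J$ being a division algebra over $F$ forces $J_{F_P}$ to remain a division algebra for every $P$---is the more delicate point, and I expect it to be the main obstacle.  It does not follow from our local-global principle alone, since Theorem~\ref{shapoints}(\ref{sha pts cyclic}) gives only injectivity of the product map $H^3(F,\Z/3\Z(2)) \to \prod_P H^3(F_P,\Z/3\Z(2))$, not injectivity at each individual $F_P$.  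I would argue it by combining the already-proven local-global principle for reducedness (in contrapositive form) with the Albert dichotomy: were $J_{F_P}$ reduced for some $P$, one would aim to derive a global reducedness statement contradicting the assumption that $\Aut(J)$ is anisotropic, using the geometric structure of $\mX$ (the compatibility of $g_3$ with its values on the full collection of patches/branches in the reduction graph) to promote a single local reducedness to the corresponding global statement.  The second half of the proposition, involving discrete valuations $v\in\Omega_{\wh X}$, follows identically with Theorem~\ref{sha_vanish} replacing Theorem~\ref{shapoints}(\ref{sha pts cyclic}).
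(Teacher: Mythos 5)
Your overall strategy---classifying Albert algebras via $H^1(*,F_4)$, characterizing each property by vanishing (or non-vanishing) of the classical invariants $f_3\in H^3(*,\Z/2\Z)$, $f_5\in H^5(*,\Z/2\Z)$, $g_3\in H^3(*,\Z/3\Z(2))$, and then invoking Theorems~\ref{shapoints}(\ref{sha pts cyclic}) and~\ref{sha_vanish} through Proposition~\ref{invariant local-global}---is exactly the paper's approach. For the reducedness, splitness, and isomorphism items your arguments match the paper in substance, and the small deviations (e.g.\ listing $f_5=0$ as a criterion for splitness, where the paper uses only $f_3=g_3=0$) are immaterial: all three conditions are preserved by base change in the easy direction, and in the other direction are a direct application of the injectivity of the local-global maps in the relevant cohomology degrees.

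The genuine gap is in the anisotropy item, and it comes from an incorrect characterization. You assert that $\Aut(J)$ is anisotropic if and only if $J$ is a division algebra (equivalently $g_3(J)\ne 0$). This is false: a reduced Albert algebra can have an anisotropic automorphism group --- over $\mathbb{R}$, the reduced algebra $H_3(\mathbb{O})$ has $g_3=0$ (since $H^3(\mathbb{R},\Z/3\Z(2))=0$), yet its automorphism group is the compact real form of $F_4$, which is anisotropic. The correct statement from \cite[9.4]{Se:Bo}, used in the paper, is that $\Aut(J)$ is \emph{isotropic} if and only if $f_5(J)=0$ \emph{and} $g_3(J)=0$. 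Your incorrect characterization is what leads you into the subsequent hand-wavy discussion of ``promoting a single local reducedness to a global statement using the geometric structure of $\mX$,'' which is not an argument; indeed, with your (wrong) characterization the claim you are trying to prove --- that $J$ a global division algebra stays a division algebra over every $F_P$ --- does not follow from any of the paper's local-global machinery, because the machinery only controls simultaneous local vanishing over \emph{all} $P$, not vanishing at a single $P$. Once the characterization is corrected to involve $f_5$ and $g_3$ jointly, the anisotropy item is handled on the same formal footing as the other three, by applying the injectivity of the local-global maps on $H^5(F,\Z/2\Z)$ and $H^3(F,\Z/3\Z(2))$ together with functoriality of the invariants, and the special pleading you attempt disappears.
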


\begin{proof}
Albert algebras are classified by $H^1(F,G)$, where $G$ is a split simple linear algebraic group over $F$ of type $F_4$.  Moreover (see~\cite[9.2,9.3]{Se:Bo}) there are cohomological invariants 
\[f_3:H^1(F,G) \to H^3(F,\Z/2\Z), \ f_5:H^1(F,G) \to H^5(F,\Z/2\Z), \ 
g_3:H^1(F,G) \to H^3(F,\Z/3\Z),\]
where $H^3(F,\Z/3\Z) = H^3(F,\Z/3\Z(2))$.
The properties of Albert algebras listed in the proposition are respectively equivalent to the following conditions involving these invariants (see~\cite[9.4]{Se:Bo}):
\begin{itemize}
\item The invariant $g_3$ vanishes on the algebra.
\item The invariants $f_3$ and $g_3$ each vanish on the algebra.
\item The invariants $f_5$ and $g_3$ are each non-vanishing on the algebra.
\item The two reduced algebras have the same pair of invariants $f_3, f_5$.
\end{itemize}
By the injectivity of the local-global maps on $H^3(F,\Z/2\Z)$, $H^5(F,\Z/2\Z)$, and $H^3(F,\Z/3\Z(2))$ (viz.\ by Theorems~\ref{shapoints}(\ref{sha pts cyclic}) and~\ref{sha_vanish} respectively), and by the functoriality of the invariants $f_3, f_5, g_3$, the assertion then follows.
\end{proof}


\medskip

\noindent Author information:

\medskip

\noindent David Harbater: Department of Mathematics, University of Pennsylvania, Philadelphia, PA 19104-6395, USA\\ email: {\tt harbater@math.upenn.edu}

\medskip

\noindent Julia Hartmann: Lehrstuhl f\"ur Mathematik (Algebra), RWTH Aachen University, 52056 Aachen, Germany\\ email:  {\tt 
Hartmann@mathA.rwth-aachen.de}

\medskip

\noindent Daniel Krashen: Department of Mathematics, University of Georgia, Athens, GA 30602, USA\\ email: {\tt dkrashen@math.uga.edu}

\end{document}